\title[How incomputable is the separable Hahn-Banach theorem?]{How incomputable\\
    is the separable Hahn-Banach theorem?}
\author{Guido Gherardi}
    \address{Dipartimento di Filosofia,
    Universit\`{a} di Bologna,
    via Zamboni 38,
    40126 Bologna,
    Italy}
\email{guido.gherardi@unibo.it}
\author{Alberto Marcone}
    \address{Dipartimento di Matematica e Informatica,
    Universit\`{a} di Udine,
    viale delle Scienze 206,
    33100 Udine,
    Italy}
\email{alberto.marcone@dimi.uniud.it}
\newtheorem{theorem}{Theorem}[section]
\newtheorem{lemma}[theorem]{Lemma}
\newtheorem{cor}[theorem]{Corollary}
\theoremstyle{definition}
\newtheorem{definition}[theorem]{Definition}
\newtheorem{remark}[theorem]{Remark}
\newtheorem{ex}[theorem]{Example}
\numberwithin{equation}{section}
\newcommand{\set}[2]{\{\,{#1}\mid{#2}\,\}}
\newcommand{\SET}[2]{\left\{\,{#1}\mid{#2}\,\right\}}
\newcommand{\la}{\left\langle}
\newcommand{\ra}{\right\rangle}
\newcommand{\conc}{{}^\smallfrown}
\newcommand{\toto}{\rightrightarrows}
\newcommand{\sbsq}{{\subseteq}}
\newcommand{\dom}{\operatorname{dom}}
\newcommand{\ran}{\operatorname{ran}}
\newcommand{\N}{{\mathbb N}}
\newcommand{\Q}{{\mathbb Q}}
\newcommand{\R}{{\mathbb R}}
\newcommand{\Can}{\ensuremath{{2^\N}}}
\newcommand{\Bai}{\ensuremath{{\N^\N}}}
\newcommand{\LL}{\ensuremath{\mathcal{L}_2}}
\newcommand{\mcA}{\mathcal{A}}
\newcommand{\mcK}{\mathcal{K}}
\newcommand{\cred}{\leqslant_c}
\newcommand{\ncred}{\nleqslant_c}
\newcommand{\creds}{<_c}
\newcommand{\ceq}{\cong_c}
\def\a{\alpha}
\def\b{\beta}
\def\g{\gamma}
\def\d{\delta}
\def\e{\eta}
\def\i{\iota}
\def\k{\kappa}
\def\l{\lambda}
\def\m{\mu}
\def\p{\psi}
\def\s{\sigma}
\def\r{\rho}
\def\Otto{\Longleftrightarrow}
\def\fa{\forall}
\def\eps{\emptyset}
\newcommand{\PI}[2]{\ensuremath{\boldsymbol\Pi^{#1}_{#2}}}
\newcommand{\SI}[2]{\ensuremath{\boldsymbol\Sigma^{#1}_{#2}}}
\newcommand{\DE}[2]{\ensuremath{\boldsymbol\Delta^{#1}_{#2}}}
\newcommand{\system}[1]{\mbox{\fontfamily{cmss}\fontshape{n}\fontseries{m}%
    \selectfont#1}}
\newcommand{\RCA}{\system{RCA}\ensuremath{_0}}
\newcommand{\WKL}{\system{WKL}\ensuremath{_0}}
\newcommand{\ACA}{\system{ACA}\ensuremath{_0}}
\newcommand{\ATR}{\system{ATR}\ensuremath{_0}}
\newcommand{\PCA}{\system{$\PI11$-CA}\ensuremath{_0}}
\newcommand{\Sep}{\ensuremath{\operatorname{\mathbf{Sep}}}}
\newcommand{\Seq}{\ensuremath{\N^{<\N}}}
\newcommand{\Seqd}{\ensuremath{2^{<\N}}}
\newcommand{\Path}{\ensuremath{\operatorname{\mathbf{Path}_2}}}
\newcommand{\InfTr}{\ensuremath{\operatorname{\mathbf{InfTr}_2}}}
\newcommand{\PathB}{\ensuremath{\operatorname{\mathbf{Path}_B}}}
\newcommand{\InfTrB}{\ensuremath{\operatorname{\mathbf{InfTr}_B}}}
\newcommand{\Cont}{\ensuremath{\operatorname{Cont}}}
\newcommand{\HB}{\ensuremath{\operatorname{\mathbf{HB}}}}
\newcommand{\Sup}{\ensuremath{\operatorname{\mathbf{Sup}}}}
\newcommand{\Ran}{\ensuremath{\operatorname{\mathbf{Range}}}}
\newcommand{\Sel}{\ensuremath{\operatorname{\mathbf{Sel}}}}
\newcommand{\Vect}{\ensuremath{\operatorname{Vect_\Q}}}
\newcommand{\BS}{\ensuremath{\mathfrak{B}}}
\newcommand{\PF}{\ensuremath{\mathcal{PF}}}
\begin{document}
\begin{abstract}
We determine the computational complexity of the Hahn-Banach
Extension Theorem. To do so, we investigate some basic connections
between reverse mathematics and computable analysis. In particular,
we use Weak K\"{o}nig's Lemma within the framework of computable
analysis to classify incomputable functions of low complexity. By
defining the multi-valued function $\Sep$ and a natural notion of
reducibility for multi-valued functions, we obtain a computational
counterpart of the subsystem of second order arithmetic $\WKL$. We
study analogies and differences between $\WKL$ and the class of
\Sep-computable multi-valued functions. Extending work of Brattka,
we show that a natural multi-valued function associated with the
Hahn-Banach Extension Theorem is \Sep-complete.
\end{abstract}
\keywords{Computable Analysis, Reverse Mathematics, Weak K\"{o}nig's
Lemma, Hahn-Banach Extension Theorem, multi-valued functions}
\subjclass[2000]{Primary 03F60; Secondary 03B30; 46A22; 46S30}
\thanks{We would like to thank Vasco Brattka for profitable discussion on the
subject.}
\thanks{An extended abstract of this paper appears in the Proceedings
of the Fifth International Conference on Computability and
Complexity in Analysis (CCA 2008), \emph{Electronic Notes in
Theoretical Computer Science} 221, 85--102, 2008,
doi:10.1016/j.entcs.2008.12.009}

\maketitle

\section{Introduction}

In this paper we tackle a problem in computable analysis
(\cite{Weih00} is the main reference in the area) borrowing ideas
and proof techniques from the research program of reverse
mathematics (\cite{sosoa} is the standard reference). The two
subjects share the goal of classifying complexity of mathematical
practice. Reverse mathematics was started by Harvey Friedman in the
1970's (\cite{Fried74}). It adopts a proof-theoretic viewpoint
(although techniques from computability theory are increasingly
important in the subject) and investigates which axioms are needed
to prove a given theorem (see Section \ref{section:rm} for details).
On the other hand, computable analysis extends to computable
separable metric spaces the notions of computability and
incomputability by combining concepts of approximation and of
computation. To this end the representation approach (Type-2 Theory
of Effectivity, TTE), introduced for real functions by Grzegorczyk
and Lacombe (\cite{Grz,Lac}), is used. This approach provides a
realistic and flexible model of computation.

One of the goals of computable analysis is to study and compare
degrees of incomputability of (possibly multi-valued) functions
between separable metric spaces. Multi-valued functions are the
appropriate way of dealing with situations where problems have
non-unique solutions, and have been studied in computable analysis
since \cite{Weih00}. In this paper we introduce a notion of
computable reducibility for multi-valued functions which generalizes
at once both notions of reducibility for single-valued functions
extensively studied by Brattka in \cite{Bra05}. Let $f: \sbsq X
\toto Y$\footnote{The notation $f: \sbsq X \toto Y$ means that $f$
is a multi-valued function with $\dom(f) \subseteq X$ and $\ran(f)
\subseteq Y$. Following \cite[\S1.4]{Weih00}, we view a partial
multi-valued function $f: \sbsq X \toto Y$ as a subset of $X \times
Y$. Then $\dom(f) = \set{x \in X}{\exists y \in Y\, (x,y) \in f}$
and, when $x \in \dom(f)$, we have $f(x) = \set{y \in Y}{(x,y) \in
f}$.} and $g: \sbsq U \toto V$ be two (partial) multi-valued
functions, where $X,Y,U,V$ are separable metric spaces. We say that
$f$ is computably reducible to $g$, and write $f \cred g$, if there
are computable multi-valued functions $h: \sbsq X \toto U$ and $k:
\sbsq X \times V \toto Y$ such that $\emptyset \neq k (x, g(h(x)))
\subseteq f(x)$ (see Definition \ref{def:composition} below for the
definition of composition of multi-valued functions) for all $x \in
\dom(f)$. We use $\creds$ and $\ceq$ to denote the strict order and
the equivalence relation defined in the obvious way.

In \cite{Bra08} Brattka started the study of the separable
Hahn-Banach Theorem from the viewpoint of computable analysis. Given
a computable separable Banach space $X$ consider the multi-valued
function $H_X$ mapping a closed linear subspace $A$ of $X$ and a
bounded linear functional $f: A \to \R$ with $\|f\| = 1$ to the set
of all bounded linear functionals $g: X \to \R$ which extend $f$ and
are such that $\|g\| = 1$. For many computable separable Banach
spaces $X$, it turns out that $H_X$ is incomputable. Brattka does
not establish precisely the degree of incomputability of these
functions, as he shows, in our notation, that $H_X \creds C_1$ for
every $X$. Here $C_1$ is a standard function considered in
computable analysis, the first in a sequence of increasingly
incomputable functions (see Definition \ref{ck} below).

We generalize Brattka's approach and consider the following \lq\lq
global separable Hahn-Banach multi-valued function\rq\rq\ \HB: \HB\
takes as input a separable Banach space $X$, a closed linear
subspace $A \subseteq X$ and a bounded linear functional $f: A \to
\R$ of norm $1$, and gives as output the bounded linear functionals
$g: X \to \R$ which extend $f$ and are such that $\|g\| = 1$.

Reverse mathematics suggests a plausible representative for the
degree of incomputability of \HB. To see this, recall that reverse
mathematics singled out five subsystems of second order arithmetic:
in order of increasing strength these are \RCA, \WKL, \ACA, \ATR\
and \PCA. Most theorems of ordinary mathematics are either provable
in the weak base system \RCA\ or are equivalent, over \RCA, to one
of the other systems. Computable functions naturally correspond to
\RCA\ and is easy to see that $C_1$ (and indeed any $C_k$ with
$k>0$) corresponds to \ACA\ (the correspondence between a system and
a function will be made precise in Section \ref{section:rm&ca}
below). Brown and Simpson (\cite{BS}) showed that, over the base
theory \RCA, the Hahn-Banach Theorem for separable Banach spaces is
equivalent to \WKL. Thus to define a representative for the
incomputability degree of \HB\ we could look for a function in
computable analysis corresponding to \WKL.

We consider the multi-valued function $\Sep: \sbsq \Bai \times \Bai
\toto \Can$ defined on $\set{(p,q) \in \Bai \times \Bai}{\forall n\,
\forall m\, p(n) \neq q(m)}$ by
\[
\Sep (p,q) = \set{r \in \Can}{\forall n (r(p(n)) = 0 \land r(q(n))
= 1)}.
\]
In other words, the domain of $\Sep$ is the collection of pair of
functions from the natural numbers into themselves (i.e.\ of
elements of Baire space) with disjoint ranges, and, for any such
pair $(p,q)$, $\Sep (p,q)$ is the set of the characteristic
functions of sets of natural numbers (i.e.\ elements of Cantor
space) separating the range of $p$ and the range of $q$. Thus \Sep\
corresponds to a statement (strictly connected with
\SI01-separation) which is well-known to be equivalent to \WKL\ (see
\cite[Lemma IV.4.4]{sosoa}). \Sep\ is not computable and, using our
definition of computable reducibility between multi-valued
functions, we obtain, as expected, $\Sep \creds C_1$. We also show
that $\Sep \ceq \Path$, where \Path\ is the multi-valued function
associating to an infinite subtree of $\Seqd$ the set of its
infinite paths. Moreover we prove that $\Sep \ceq \HB$, establishing
the degree of incomputability of the separable Hahn-Banach Theorem.

The \lq\lq reversal\rq\rq\ in Brown and Simpson's result (i.e.\ the
proof that the separable Hahn-Banach Theorem implies \WKL) is based
on a construction due to Bishop, Metakides, Nerode and Shore
(\cite{MNS}) and appears also in \cite[Theorem IV.9.4]{sosoa}. We
exploit the ideas of this proof to show $\Sep \cred \HB$. The
original proof by Brown and Simpson of the \lq\lq forward
direction\rq\rq\ (showing that \WKL\ proves the separable
Hahn-Banach Theorem) has been simplified first by Shioji and Tanaka
(\cite{ST}, this is essentially the proof contained in \cite[\S
IV.9]{sosoa}) and then by Humphreys and Simpson (\cite{HS}). No
details of these or other proofs of the Hahn-Banach Theorem are
needed for showing $\HB \cred \Sep$. Brattka noticed the possibility
of avoiding these details in \cite{Bra08} and wrote:
\emph{Surprisingly, the proof of this theorem does not require a
constructivization of the classical proof but just an \lq\lq
external analysis\rq\rq}. We explain this fact by observing that the
computable analyst is allowed to conduct an unbounded search for an
object that is guaranteed to exist by (nonconstructive) mathematical
knowledge, whereas the reverse mathematician has the burden of an
existence proof with limited means. We give another instance of this
phenomenon in Example \ref{compact} below.

Of course, each of the mathematical objects mentioned above needs
some \lq\lq coding\rq\rq\ (in reverse mathematics jargon) or \lq\lq
representation\rq\rq\ (using computable analysis terminology). In
this respect the computable analysis and the reverse mathematics
traditions have developed slightly different approaches to separable
Banach spaces.\smallskip

The plan of the paper is as follows. Sections \ref{section:ca} and
\ref{section:rm} are brief introductions to computable analysis and
reverse mathematics, respectively. The reader with some basic
knowledge in one of these fields can safely skip the corresponding
section and refer back to it when needed. Section
\ref{section:multi} deals with multi-valued functions and computable
reductions among them. In Section \ref{section:rm&ca} we compare
reverse mathematics and computable analysis. We show the
similarities of the two approaches, but also note that results
cannot be translated automatically in either direction. The
multi-valued function \Sep\ is studied in Section \ref{section:Sep}.
Section \ref{section:BS} sets up the study of Banach spaces in
computable analysis, while Section \ref{section:HB} contains the
proof of $\HB \ceq \Sep$.\bigskip

\subsection{Notation for sequences}
We finish this introduction by establishing our notation for finite
and infinite sequences of natural numbers.

Let \Seq, resp.\ \Bai, be the sets of all finite, resp.\ infinite,
sequences of natural numbers. When $s \in \Seq$ we use $|s|$ to
denote its length and, for $i<|s|$, $s(i)$ to denote the $(i+1)$-th
element in the sequence. Similarly, $p(i)$ is defined for every $i$
when $p \in \Bai$. Let $\N^n$ be set of all $s \in \Seq$ with
$|s|=n$. We use $\l$ to denote the empty sequence, i.e.\ the only
element of $\N^0$, and $\bar{0}$ to denote the infinite sequence
which always takes value $0$. When $s,t \in \Seq$ we write $s
\sqsubseteq t$ to mean that $s$ is an initial segment of $t$. $s
\conc t$ is the sequence obtained by concatenating $t$ after $s$,
and when $k \in \N$, $s*k$ abbreviates $s \conc \la k \ra$, and
$k*s$ abbreviates $\la k \ra \conc s$. When $p \in \Bai$ we write
also $s \conc p$ and $k*p$, which are the obvious elements of
$\Bai$. If $p \in \Bai$ and $n \in \N$ we write $p[n]$ for the
sequence $\la p(0), p(1), \dots, p(n-1) \ra \in \N^n$. If $p,q \in
\Bai$ we let $p \oplus q \in \Bai$ be such that $(p \oplus q) (2i) =
p(i)$ and $(p \oplus q) (2i+1) = q(i)$.

We define \Can, \Seqd, and $2^n$ as the subsets of \Bai, \Seq, and
$\N^n$ whose elements take values in $\{0,1\}$.

We fix a bijection between \Seq\ and $\N$ and, as usual in the
literature, we identify an element of \Seq\ with the corresponding
natural number. We assume that the maps $s \mapsto |s|$, $(s,i)
\mapsto s(i)$, $k \mapsto \la k \ra$, and $(s,t) \mapsto s \conc t$
are all computable.

Of course, \Bai\ has a natural topology, namely the product topology
starting from the discrete topology on $\N$. When we view \Bai\ as a
topological space we call it the \emph{Baire space}. Similarly,
\Can\ with the relative topology is the \emph{Cantor space}.

\section{Computable analysis}\label{section:ca}

\subsection{TTE computability}
In contrast with the case of natural numbers, several nonequivalent
approaches to computability theory for the reals have been proposed
in the literature. We work in the framework of the so called Type-2
Theory of Effectivity (TTE), which finds a systematic foundation in
\cite{Weih00}. TTE extends the ordinary notion of Turing
computability to second countable $T_0$-topological spaces, and
therefore deals with computability over the reals as a particular
case within a more general theory.

The basic idea of TTE is that concrete computing machines do not
manipulate directly abstract mathematical objects, but they perform
computations on sequences of digits which are codings for such
objects. In general, mathematical objects require an infinite amount
of information to be completely described, and it is therefore
natural to extend the ordinary theory of computation to infinite
sequences. This extension does not compromise the concreteness of
the model, since computations on infinite sequences have a very
natural translation in terms of ordinary Turing computations on
finite sequences (see \cite[Lemma 2.1.11]{Weih00}). The most
important feature that differentiates TTE Turing machines from
ordinary Turing machines is the fact that they are conceived to work
on infinite strings of $0$'s and $1$'s, and they do that according
to the following specifications. TTE Turing machines have one input
tape, one working tape and one output tape. Each tape is equipped
with a head. All ordinary instructions for Turing machines are
allowed for the working tape, while the head of the input tape can
only read and move rightward, and the head of the output tape can
only write and move rightward. These limitations (in particular,
those for the output tape) imply the impossibility of correcting the
output; once a digit is written, it cannot be canceled or changed.
Hence at each stage of the computation the partial output is
reliable (this is the most we can ask, since in finitely many steps
we never obtain a complete output).

It is straightforward to enumerate all TTE Turing machines and let
$M_k$ be the $k$-th such machine. Let $\xi_k: \sbsq \Bai \to \Bai$
be the partial function computed by $M_k$ as follows. Given $p \in
\Bai$ let $p'$ consist of $p(0)$ $1$'s followed by a single $0$,
$p(1)$ $1$'s, and so on; write $p'$ on the input tape and start
$M_k$; if the computation is infinite and the output tape eventually
contains an infinite sequence of $0$'s and $1$'s with infinitely
many $0$'s we translate back to an element of \Bai\ which is $\xi_k
(p)$; otherwise $p \notin \dom(\xi_k)$.

Notice that $\dom(\xi_k)$ is a $G_\d$ subset of \Bai\ for every $k$.

\begin{definition}[Computable functions on \Bai]
We say that a function $F: \sbsq \Bai \to \Bai$ is \emph{computable}
if there exists $k$ such that $\dom(F) \subseteq \dom(\xi_k)$ and
$F(p) = \xi_k(p)$ for every $p \in \dom(F)$.
\end{definition}

As noticed by Weihrauch (\cite[p. 38]{Weih00}), TTE Turing machines
can be viewed as ordinary oracle Turing machines; the oracle
supplies the information about the input and the $n$-th bit of the
output is computed when we give $n$ as input to the oracle Turing
machine. Therefore the computable partial functions from \Bai\ to
\Bai\ coincide with the computable (or recursive)
functionals\footnote{Beware that in some literature \lq\lq
functional\rq\rq\ means function from \Bai\ to $\N$, rather than
function from \Bai\ to \Bai\ as here.} of classical computability
(or recursion) theory, also known as Lachlan functionals.

The restrictions on the instructions allowed in TTE Turing machines
imply the following fact (\cite[Theorem 2.2.3]{Weih00}).

\begin{lemma}\label{comp->cont}
Every computable function $F:\sbsq \Bai \to \Bai$ is continuous.
\end{lemma}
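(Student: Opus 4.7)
The plan is to exploit the two defining restrictions on TTE Turing machines — the input head moves only rightward and the output head moves only rightward (and cannot overwrite) — to show that a finite prefix of the output is determined by a finite prefix of the input.

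Let $F:\sbsq\Bai\to\Bai$ be computable and fix $k$ with $F(p)=\xi_k(p)$ on $\dom(F)$. To establish continuity at an arbitrary $p\in\dom(F)$ in the product topology, I would fix $n\in\N$ and produce $m\in\N$ such that $F(q)[n]=F(p)[n]$ for every $q\in\dom(F)$ with $q[m]=p[m]$.

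First I would unpack how $F(p)[n]$ is read off from the run of $M_k$. Since $p\in\dom(\xi_k)$, the output tape eventually contains an infinite $0$/$1$ sequence with infinitely many $0$'s; decoding the first $n$ entries of $F(p)\in\Bai$ requires seeing the first $n+1$ zeros (or enough of them to determine $F(p)(0),\dots,F(p)(n-1)$). So there is a finite stage $T$ by which $M_k$ has printed exactly the prefix of the output tape needed to compute $F(p)[n]$; because the output head never retreats, whatever is printed by stage $T$ is final. At that same stage the input head has scanned only some finite initial segment, say the first $s$ cells of the encoded input $p'$. Choosing $m$ large enough that the first $s$ cells of $p'$ are determined by $p(0),\dots,p(m-1)$ is routine from the definition of the encoding $p\mapsto p'$.

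Now I would argue the transfer to nearby inputs: for any $q\in\dom(F)$ with $q[m]=p[m]$, the coded input $q'$ agrees with $p'$ on its first $s$ cells, so the computation of $M_k$ on $q$ is bit-for-bit identical to the one on $p$ through stage $T$. In particular it prints the same initial segment of the output tape, and therefore $F(q)[n]=F(p)[n]$. This is exactly continuity of $F$ at $p$ in the Baire-space topology.

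I do not expect any genuine obstacle; the only technical care needed is the bookkeeping around the encoding $p\mapsto p'$ and the fact that recovering $F(p)[n]$ requires only finitely many output symbols. The conceptual content is entirely the monotonicity of the two heads, which already rules out any dependence of early output on late input.
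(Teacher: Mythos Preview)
Your argument is correct and is precisely the standard proof: the one-way output head makes every printed prefix final, and by the time a given output prefix appears only finitely many input cells have been read, so any input agreeing on that prefix yields the same partial output. The paper, however, does not give a proof of this lemma at all; it simply records it as a fact implied by the restrictions on TTE Turing machines and cites \cite[Theorem 2.2.3]{Weih00}. So there is nothing to compare at the level of method---your write-up supplies exactly the details that the cited reference contains and that the paper chose to omit.
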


We transfer the notion of computability for the Baire space to
spaces with cardinality less than or equal to the continuum using
the notion of representation.

\begin{definition}[Representations and represented spaces]\label{def:representation}
A \emph{representation} $\s_X$ of a set $X$ is a surjective function
$\s_X: \sbsq \Bai \to X$. The pair $(X,\s_X)$ is a \emph{represented
space}.

If $x \in X$ a \emph{$\s_X$-name} for $x$ is any $p \in \Bai$ such
that $\s_X(p) = x$.

We say that $x$ is \emph{$\s_X$-computable} when it has a computable
$\s_X$-name $p$ (i.e.\ $graph(p)$ is a computable set).
\end{definition}

\subsection{Effective metric spaces}
The definition of representation is too general for practical
purposes, as it allows an object in $X$ to be coded by arbitrary
sequences. However, there are important cases in which we can find
meaningful representations, for example when $X$ is a separable
metric space.

\begin{definition}[Effective metric space]\label{def:ems}
An \emph{effective metric space} is a triple $(X,d,a)$ where
\begin{itemize}
    \item $(X,d)$ is a separable metric space;
    \item $a: \N \to X$ is a dense sequence in $X$.
\end{itemize}
If there is no danger of confusion, we often write $X$ in place of
$(X,d,a)$.

We equip every effective metric space $(X,d,a)$ with the
\emph{Cauchy representation} $\d_X: \sbsq \Bai \to X$, such that $p
\in \dom(\d_X)$ if and only if for all $i$ and all $j \geq i$,
$d(a(p(i)), a(p(j))) \leq 2^{-i}$, and $\d_X(p) = x$ if and only if
$\lim a(p(n)) = x$. In other words, $p \in \Bai$ is a name for $x$
when $p$ encodes a Cauchy sequence of elements in the fixed dense
subset of $X$ which \emph{converges effectively} to $x$.

A \emph{rational open ball} in $(X,d,a)$ is an open ball of the form
$B^X (c;\a) = \set{x \in X}{d(x,c) < \a}$ with $c \in \ran(a)$, and
$\a \in \Q^+\cup \{0\}$.
\end{definition}

In particular, we have the effective metric space $(\R, d, a_\Q)$,
where $d(x,y) = |x-y|$ and $a_\Q$ is a standard computable
enumeration of the set of the rational numbers (it is convenient to
assume $a_\Q(0)=0$ and $a_\Q(1)=1$).

The notion of effective metric space can be generalized.

\begin{definition}[Effective topological space]\label{def:ets}
An \emph{effective topological space} is a triple $(X, \tau, u)$,
where $\tau$ is a second countable $T_0$-topology on $X$ and $u: \N
\to \mathcal{P} (X)$ is an enumeration of a sub-base of $\tau$.

Each effective topological space $(X, \tau, u)$ has a \emph{standard
representation} $\d_X$ such that $\d_X(p) = x \in X$ if and only if
$\ran(p) = \set{n}{x \in u(n)}$.
\end{definition}

It is immediate that effective metric spaces are particular examples
of effective $T_0$-topological spaces. In fact, if $(X,d,a)$ is an
effective metric space we let $u$ enumerate the rational open balls
of $X$. We will always assume that there exist computable functions
$c$ and $r$ such that $u(n)$ has center $a (c(n))$ and radius $a_\Q
(r(n))$. In this context we usually write $B^X_n$ in place of
$u(n)$.

The Cauchy representation of an effective metric space $X$ is
equivalent to the representation of $X$ considered as an effective
topological space. This equivalence means that each representation
is reducible to the other, where a representation $\d$ of a set $X$
is \emph{reducible} to a representation $\s$ of the same set when
there is a continuous function $F: \sbsq \Bai \to \Bai$ such that
$\d(p) = \s(F(p))$ for all $p \in \dom(\d)$. A representation of $X$
which is equivalent to the standard representation is said to be
\emph{admissible} for $X$.

\begin{definition}[Realizers]\label{realizer}
Given represented spaces $(X, \s_X)$ and $(Y, \s_Y)$ and a partial
function $f: \sbsq X \to Y$, we say that $F:\sbsq \Bai \to \Bai$ is
a \emph{$(\s_X, \s_Y)$-realizer} of $f$ when $f(\s_X(p)) =
\s_Y(F(p))$, for all $p \in \dom(f \circ \s_X)$.

The function $f$ is said to be $(\s_X, \s_Y)$-\emph{computable} if
it has a computable $(\s_X, \s_Y)$-realizer. In practice we often
omit explicit mention of the representations and write just
computable.
\end{definition}

Using the notion of realizer we thus extend the notion of computable
from the Baire space to the effective topological spaces. This
extension is particularly successful when we use admissible
representations, as the following results (due to Kreitz and
Weihrauch) show.

\begin{theorem}\label{main}
Let $X$ and $Y$ be effective topological spaces with admissible
representations $\s_X$ and $\s_Y$. A function $f: \sbsq X \to Y$ is
continuous if and only if it has a continuous $(\s_X,
\s_Y)$-realizer.
\end{theorem}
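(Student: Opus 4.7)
The plan is to use the admissibility hypothesis to reduce to the special case in which $\s_X$ and $\s_Y$ are the standard representations associated with the given sub-bases: equivalence of representations furnishes continuous translating functions in both directions, and continuity is preserved under composition (Lemma \ref{comp->cont}), so a continuous realizer with respect to one pair of representations can be pre- and post-composed to give a continuous realizer with respect to the other. Write $u_X$ and $u_Y$ for the fixed sub-base enumerations; then a $\s_X$-name of $x$ is any $p \in \Bai$ with $\ran(p) = \set{n}{x \in u_X(n)}$, and similarly for $Y$.

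For the direction from continuous realizer to continuous function, assume $F$ is a continuous $(\s_X,\s_Y)$-realizer of $f$ and fix $x_0 \in \dom(f)$ together with a sub-basic $u_Y(m) \ni f(x_0)$. Pick any $\s_X$-name $p_0$ of $x_0$; then $F(p_0)$ is a $\s_Y$-name of $f(x_0)$, so the index $m$ appears among the first $k_0$ outputs of $F(p_0)$ for some $k_0$. By continuity of $F$, there is $k$ such that these first $k_0$ outputs depend only on $p_0[k]$. Setting $U = \bigcap_{i<k} u_X(p_0(i))$, I obtain a basic open neighborhood of $x_0$; for every $x \in U \cap \dom(f)$, the concatenation of $p_0[k]$ with any enumeration of $\set{n}{x \in u_X(n)}$ is again a $\s_X$-name (of $x$), so $F$ still produces $m$ within its first $k_0$ outputs, forcing $f(x) \in u_Y(m)$. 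Hence $f$ is continuous at $x_0$.

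For the converse, suppose $f$ is continuous. For each $m$, $f^{-1}(u_Y(m))$ is open in $\dom(f)$, so there is an open $W_m \sbsq X$ with $f^{-1}(u_Y(m)) = W_m \cap \dom(f)$; by second countability write $W_m = \bigcup_i V_{m,i}$ where each $V_{m,i} = \bigcap_{j \in J_{m,i}} u_X(j)$ is a finite intersection of sub-basic opens. Fix a set-theoretic choice of such families $\{J_{m,i}\}_{m,i\in\N}$ and define $F(p)$ to enumerate, with appropriate repetitions, every $m$ for which some $J_{m,i}$ is contained in $\{p(0),\dots,p(\ell-1)\}$ for some $\ell$. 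Then $F$ is continuous because each position of its output depends only on a finite prefix of $p$, and by construction $\ran(F(p)) = \set{m}{\s_X(p) \in W_m} = \set{m}{f(\s_X(p)) \in u_Y(m)}$, so $F(p)$ is a $\s_Y$-name of $f(\s_X(p))$.

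The main obstacle lies in this last construction: the decomposition $W_m = \bigcup_i V_{m,i}$ is furnished by continuity of $f$ and second countability in a completely non-effective way, but since $F$ is only required to be continuous, not computable, this non-effectivity is harmless. The delicate technical point is simply to sequence the witness checks uniformly in $p$---for instance by diagonalising over triples $(m,i,\ell)$ and emitting $m$ each time the check $J_{m,i} \subseteq \{p(0),\dots,p(\ell-1)\}$ succeeds---so that the resulting $F$ is a total map on $\dom(\s_X)$ that is continuous and has exactly the desired range.
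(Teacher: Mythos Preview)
The paper does not actually prove Theorem~\ref{main}: it is stated as a known result attributed to Kreitz and Weihrauch, with no argument given. So there is no in-paper proof to compare against; the relevant benchmark is the standard Kreitz--Weihrauch argument found e.g.\ in \cite{Weih00}.

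Your sketch is essentially that standard argument and is correct in outline. The reduction to standard representations via the continuous translating maps coming from admissibility is fine (though your parenthetical citation of Lemma~\ref{comp->cont} is misplaced: that lemma says computable implies continuous, whereas what you use here is simply that compositions of continuous maps are continuous). The forward direction is clean: the key point, which you handle correctly, is that for $x \in U = \bigcap_{i<k} u_X(p_0(i))$ the prefix $p_0[k]$ only lists sub-basic sets that still contain $x$, so prepending it to an enumeration of $\set{n}{x \in u_X(n)}$ yields a genuine $\s_X$-name of $x$. For the converse, your non-effective choice of the families $\{J_{m,i}\}$ and the diagonal enumeration are exactly the standard construction; the verification that $\ran(F(p)) = \set{m}{f(\s_X(p)) \in u_Y(m)}$ is straightforward in both inclusions. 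One small point you gloss over with ``appropriate repetitions'' deserves a word: to guarantee that $F(p)$ is an infinite sequence you need at least one successful emission to repeat, which amounts to the (usually tacit) assumption that every point of $Y$ lies in some $u_Y(m)$; this is required anyway for $\s_Y$ to be surjective, so it is harmless, but worth saying explicitly.
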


\begin{cor}\label{comp->cont2}
Let $X$ and $Y$ be effective topological spaces with admissible
representations $\s_X$ and $\s_Y$. Then every function $f: \sbsq X
\to Y$ which is $(\s_X, \s_Y)$-computable is continuous.
\end{cor}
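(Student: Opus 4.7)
The plan is to derive Corollary \ref{comp->cont2} as an immediate consequence of Lemma \ref{comp->cont} combined with Theorem \ref{main}. The statement essentially says that computability implies continuity in the category of admissibly represented effective topological spaces, and the whole point of admissibility is precisely to make the continuity/computability dictionary pass through realizers.

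First, unwind the hypothesis. If $f:\sbsq X \to Y$ is $(\s_X,\s_Y)$-computable, then by Definition \ref{realizer} it admits a computable $(\s_X,\s_Y)$-realizer $F:\sbsq\Bai\to\Bai$. By Lemma \ref{comp->cont}, any computable partial function from \Bai\ to \Bai\ is continuous, so $F$ is continuous. Thus $f$ has a continuous $(\s_X,\s_Y)$-realizer.

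Second, invoke the Kreitz--Weihrauch theorem stated as Theorem \ref{main}: since $\s_X$ and $\s_Y$ are admissible representations of the effective topological spaces $X$ and $Y$, a partial function $f: \sbsq X \to Y$ is continuous if and only if it admits a continuous $(\s_X,\s_Y)$-realizer. Applying the \emph{if} direction to the continuous realizer $F$ obtained above yields that $f$ is continuous, which is the desired conclusion.

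There is no real obstacle here: admissibility is exactly the hypothesis that makes Theorem \ref{main} available, and Lemma \ref{comp->cont} supplies the continuity of $F$ for free from the TTE machine model. The only thing to be slightly careful about is the domain convention in Definition \ref{realizer}, namely that $F$ is required to behave correctly on $\dom(f\circ\s_X)$ rather than on all of $\dom(\s_X)$; but this affects neither the continuity of $F$ on its domain nor the applicability of Theorem \ref{main}, so the argument is a one-line composition of the two preceding results.
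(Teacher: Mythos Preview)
Your proof is correct and is exactly the argument the paper intends: the corollary is stated without proof immediately after Lemma~\ref{comp->cont} and Theorem~\ref{main}, and the derivation is precisely the composition you give---a computable realizer is continuous by Lemma~\ref{comp->cont}, and then the \emph{if} direction of Theorem~\ref{main} yields continuity of $f$.
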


Corollary \ref{comp->cont2} is an extension of Lemma
\ref{comp->cont}. We point out that Theorem \ref{main} and Corollary
\ref{comp->cont2} hold in particular for effective metric spaces and
Cauchy representations.

The notions of effective metric and effective topological spaces in
their complete generality have no computational content. In fact,
notwithstanding the established terminology (\cite{Weih00}), we are
not requiring any \lq\lq effectivity\rq\rq\  property (even the
computable enumeration of the rational open balls of an effective
metric space is nothing but an enumeration of pairs of natural
numbers). In the case of effective metric spaces, the natural \lq\lq
effective\rq\rq\ requirement is the computability of the distance
between points.

\begin{definition}[Computable metric space]\label{def:cms}
A \emph{computable metric space} is an effective metric space
$(X,d,a)$ such that the function $(n,m) \mapsto d(a(n), a(m))$ is
computable.
\end{definition}

If $X$ is a computable metric space it is straightforward that the
distance function is $((\d_X, \d_X), \d_\R)$-computable. Typical
examples of computable metric spaces are $\R$ and the Baire space
(recall that for $p, q \in \Bai$ such that $p \neq q$ we let $d(p,q)
= 2^{-i}$ for the least $i$ such that $p(i) \neq q(i)$).

In the case of effective $T_0$-topological spaces, the \lq\lq
effective\rq\rq\ requirement is the computability of the operation
of intersection of open sets (see \cite{GW}).

\subsection{Representations of continuous functions}\label{subsect:rcf}
Notice that the set of all continuous partial functions on the Baire
space is too large to have a representation. However, every partial
continuous function $f: \sbsq \Bai \to \Bai$ has a continuous
extension to a $G_\d$ set (\cite[Theorem 2.3.8]{Weih00}; this is an
instance of a classical result due to Kuratowski, see e.g.\
\cite[Theorem 3.8]{cdst}). Thus it suffices to represent
\[
\Cont = \set{F:\sbsq \Bai \to \Bai}{F \text{ is continuous and
$\dom(F)$ is $G_\d$}}.
\]
Lemma \ref{comp->cont} implies that each computable $F: \sbsq \Bai
\to \Bai$ has an extension in \Cont. Define $\eta: \Bai \to \Cont$
by $\eta (k*p) (q) = \xi_k (p \oplus q)$, for $k \in \N$ and $p,q
\in \Bai$. $\eta$ is a representation of \Cont.

Given effective metric spaces $X$ and $Y$, we define a
representation $[\d_X \to \d_Y]$ of the set $\mathcal{C} (X,Y)$ of
total continuous functions from $X$ into $Y$ by $[\d_X \to \d_Y] (p)
= f$ if and only if $\e(p)$ is a $(\d_X, \d_Y)$-realizer of $f$.
This representation satisfies the following fundamental properties:
\begin{description}
    \item[Evaluation] the map $(f,x) \mapsto f(x)$ is $(([\d_X
        \to \d_Y], \d_X), \d_Y)$-computable;
    \item[Type conversion] let $(Z,\s_Z)$ be a represented
        space; every function $g: Z \times X \to Y$ is $((\s_Z,
        \d_X), \d_Y)$-computable if and only if $\hat{g}: Z \to
        \mathcal{C} (X,Y)$, defined by $\hat{g} (z) (x) =
        g(z,x)$, is $(\s_Z, [\d_X \to \d_Y])$-computable.
\end{description}
The evaluation and type conversion properties witness the
reliability of the simulation of continuous functions on separable
metric spaces via realizers.

\subsection{Borel complexity}
Computable analysis provides a method to classify incomputable
functions between separable metric spaces in complexity hierarchies,
analogously to the classification of functions from $\N$ to $\N$
pursued in classical computability theory. In particular,
\cite{Bra05} studied the following functions of strictly increasing
complexity.

\begin{definition}[The $C_k$'s]\label{ck}
For every $k \in \N$ let $C_k:\Bai \to \Bai$ be defined by
\[
C_k(p)(n)= \begin{cases}
0 & \text{if $\exists n_k\, \forall n_{k-1}\, \exists n_{k-2}\, \dots Qn_1\,
p(\la n, n_k, n_{k-1}, \dots, n_1 \ra) \neq 0$;}\\
1 & \text{otherwise.}\\
\end{cases}
\]
where $Q$ is $\exists$ when $k$ is odd and $\forall$ when $k$ is
even.
\end{definition}

Using natural representations for Borel sets of each given finite
level, Brattka (\cite{Bra05}) says that a function $f: \sbsq X \to
Y$, for $X$ and $Y$ computable metric spaces, is
\emph{\SI0k-computable} (for $k\geq 1$) if there exists a computable
function that maps every name of an open set $U \subseteq Y$ to a
name of a \SI0k set $V \subseteq X$ such that $f^{-1}(U) = V \cap
\dom(f)$. It follows immediately that every \SI0k-computable
function is \SI0k-measurable (equivalently, of Baire class $k-1$).
Brattka shows that $f$ is \SI0{k+1}-computable if and only if $f$ is
computably reducible to $C_k$\footnote{We refer the reader to
Section \ref{section:multi} below for the definition of
reducibility.}.

\section{Reverse mathematics}\label{section:rm}

In the 1970's Harvey Friedman started the research program of
reverse mathematics, which was pursued in the two next decades by
Steve Simpson and his students and increasingly by other
researchers. Nowadays reverse mathematics is an important area of
mathematical logic, crossing the boundary between computability
theory and proof theory, but employing ideas and techniques also
from model theory and set theory. We refer the reader to
\cite{sosoa} for details about the topics we will sketch in this
section (the collection \cite{RM2001} documents more recent
advances).

Reverse mathematics searches in a systematic way for equivalences
between different statements with respect to some base theory (which
does not prove any of them) in the context of subsystems of second
order arithmetic. Recall that the language \LL\ of second order
arithmetic has variables for natural numbers and variables for sets
of natural numbers, constant symbols $0$ and $1$, binary function
symbols for addition and product of natural numbers, symbols for
equality and the order relation on the natural numbers and for
membership between a natural number and a set. Second order
arithmetic is the \LL-theory with classical logic consisting of the
axioms stating that $(\N,0,1,{+},{{\cdot}},{<})$ is a commutative ordered
semiring with identity, the induction scheme for arbitrary formulas,
and the comprehension scheme for sets of natural numbers defined by
arbitrary formulas. Hermann Weyl and Hilbert and Bernays already
noticed that \LL\ was rich enough to express, using appropriate
codings, significant parts of mathematical practice, and that many
mathematical theorems were provable in (fragments of) second order
arithmetic.

Formulas of \LL\ are classified in the usual hierarchies: those with
no set quantifiers and only bounded number quantifiers are \DE00,
while counting the number of alternating unbounded number
quantifiers we obtain the classification of all arithmetical (=
without set quantifiers) formulas as \SI0n and \PI0n formulas (one
uses \SI{}{} or \PI{}{} depending on the type of the first
quantifier in the formula, existential in the former, universal in
the latter). Formulas with set quantifiers in front of an
arithmetical formula are classified by counting their alternations
as \SI1n and \PI1n. A formula is \DE in a certain theory if it is
equivalent in that theory both to a \SI in formula and to a \PI in
formula.

Reverse mathematics starts with the fairly weak base theory \RCA,
where the induction scheme and the comprehension scheme are
restricted respectively to \SI01 and \DE01 formulas. \RCA\ is strong
enough to prove some basic results about many mathematical
structures, but too weak for many others.

If a theorem $T$ is expressible in \LL\ but unprovable in \RCA,
reverse mathematics asks the question: what is the weakest axiom we
can add to \RCA\ to obtain a theory that proves $T$? In principle,
we could expect that this question has a different answer for each
$T$. The \lq\lq discovery\rq\rq\ of reverse mathematics is that this
is not the case. In fact, most theorems of ordinary mathematics
expressible in \LL\ are either provable in \RCA\ or equivalent over
\RCA\ to one of the following four subsystems of second order
arithmetic, listed in order of increasing strength: \WKL, \ACA,
\ATR, and \PCA. This leads to a neat picture where theorems
belonging to quite different areas of mathematics are classified in
five levels, roughly corresponding to the mathematical principles
used in their proofs. \RCA\ corresponds to \lq\lq computable
mathematics\rq\rq, \WKL\ embodies a compactness principle, \ACA\ is
linked to sequential compactness, \ATR\ allows for transfinite
arguments, \PCA\ includes impredicative principles.

In this paper we will refer extensively to \WKL\ and, in passing, to
\ACA. Therefore we describe these two theories in a little more
detail.

\ACA\ is obtained from \RCA\ by extending the comprehension scheme
to all arithmetical formulas. The statements without set variables
provable in \ACA\ coincide exactly with the theorems of Peano
arithmetic, so that in particular the consistency strength of the
two theories is the same. Within \ACA\ one can develop a fairly
extensive theory of continuous functions, using the completeness of
the real line as an important tool. \ACA\ proves (and often turns
out to be equivalent to) also many basic theorems about countable
fields, rings, and vector spaces.

To obtain \WKL\ we add to \RCA\ the statement of Weak K\"{o}nig's Lemma,
i.e.\ every infinite binary tree has a path, which is essentially
the compactness of Cantor space. An equivalent statement, clearly
showing that \WKL\ is stronger than \RCA\ and weaker than \ACA, is
\SI01-separation: if $\varphi(n)$ and $\psi(n)$ are \SI01-formulas
such that $\forall n\, \neg (\varphi(n) \land \psi(n))$ there exists
a set $X$ such that $\varphi(n) \implies n \in X$ and  $\psi(n)
\implies n \notin X$ for all $n$. \WKL\ and \RCA\ have the same
consistency strength of Primitive Recursive Arithmetic, and are thus
proof-theoretically fairly weak. Nevertheless, \WKL\ proves (and
often turns out to be equivalent to) a substantial amount of
classical mathematical theorems, including many results about
real-valued functions, basic Banach space facts, etc. For example,
\WKL\ is equivalent, over \RCA, to the Peano-Cauchy existence
theorem for solutions of ordinary differential equations.

\section{Multi-valued functions in computable analysis}\label{section:multi}

The main goal of this section is to give the definition of
reducibility of multi-valued functions.

Since we will often compose multi-valued functions, we spell out
Weihrauch's definition for this operation.

\begin{definition}[Composition of multi-valued functions]\label{def:composition}
Given two (partial) multi-valued functions $f: \sbsq X \toto Y$ and
$g: \sbsq Y \toto Z$, the \emph{composition} $g \circ f: \sbsq X
\toto Z$ is the multi-valued function defined by
\begin{itemize}
    \item $\dom (g \circ f) = \set{x \in \dom(f)}{f(x) \subseteq
        \dom(g)};$
    \item $\forall x \in \dom(g \circ f)\, (g \circ f) (x) =
        \bigcup_{y \in f(x)} g(y).$
\end{itemize}
\end{definition}

To define the notion of computable multi-valued function we look at
realizers.

\begin{definition}[Realizers of multi-valued functions]\label{def:realizer}
Let $(X,\s_X)$ and $(Y,\s_Y)$ be represented spaces and $f: \sbsq X
\toto Y$. A \emph{$(\s_X,\s_Y)$-realizer} for $f$ is a
(single-valued) function $F: \sbsq \Bai \to \Bai$ such that
$\s_Y(F(p)) \in f(\s_X(p))$ for every $p \in \dom(f \circ \s_X)$.
\end{definition}

Notice that in Definition \ref{def:realizer} we do not require that
$\s_X (p) = \s_X (p')$ implies $\s_Y (F(p)) = \s_Y (F(p'))$. In
other words a realizer does not, in general, lift to a single-valued
selector for the multi-valued function.

\begin{definition}[Computability of multi-valued
functions]\label{def:compmvfunct} Let $(X, \s_X)$ and $(Y, \s_Y)$ be
represented spaces. A multi-valued function $f: \sbsq X \toto Y$ is
\emph{$(\s_X, \s_Y)$-computable} if it has a computable $(\s_X,
\s_Y)$-realizer. In practice we often omit explicit mention of the
representations and write just computable.
\end{definition}

Our definition of computable multi-valued function agrees with
\cite[Definition 3.1.3.4]{Weih00} and \cite[p.21]{Bra05}. Notice
however that Brattka's paper includes also the definition of
\SI01-computable multi-valued function; for single-valued functions
the notions of computable and \SI01-computable coincide, but for
arbitrary multi-valued functions the latter is stronger.

\subsection{Reducibility of multi-valued functions}
We now define the notion of computable reducibility for multi-valued
functions. The intuitive idea is that one problem is reducible to
another, provided that whenever we have a method to compute a
solution for the second problem, we can uniformly find a way to
compute a solution for the first one. This generalizes the notion of
reducibility between single-valued functions investigated in
\cite{Bra05} and extensively used in recent work in computable
analysis. Actually, in \cite{Bra05} there are two distinct notions,
introduced in Definitions 5.1 and 7.1, of computable reducibility
between single-valued functions. Our definition generalizes the
former, and Lemma \ref{equivred} below shows that the generalization
of the latter (realizer reducibility) leads to an equivalent
concept\footnote{Brattka's notion of realizer reducibility, as well
its generalization to the case of multi-valued functions (Lemma
\ref{equivred}.(ii)), are particular cases of Wadge's reducibility
for sets of functions as defined in \cite[Def. 8.2.5]{Weih00}.}.
Thus the notion of computable reducibility appears to be more robust
in the multi-valued setting.

\begin{definition}[Reducibility of multi-valued functions]\label{def:redmulti}
Let $(X,\s_X)$, $(Y,\s_Y)$, $(Z,\s_Z)$, $(W,\s_W)$ be represented
spaces. Let $f: \sbsq X \toto Y$ and $g: \sbsq Z \toto W$ be
multi-valued functions. We say that $f$ is \emph{computably
reducible} to $g$, and write $f \cred g$, if there exist computable
multi-valued functions $h: \sbsq X \toto Z$ and $k: \sbsq X \times W
\toto Y$ such that $k(x, (g \circ h)(x)) \subseteq f(x)$ for all $x
\in \dom(f)$.
\end{definition}

Notice that when $f$ and $g$ are single-valued $k$ is single-valued
on $\set{(x,(g \circ h)(x))}{x \in \dom(f)}$, but it may be the case
that $h$ is not single-valued. Therefore the restriction of our
notion of computable reducibility to single-valued functions is
weaker than Brattka's notion of computable reducibility for
single-valued functions. However when dealing with multi-valued
functions it is natural to allow $h$ and $k$ to be multi-valued as
well. As we have pointed out, the following Lemma gives further
support to our definition, by showing that it coincides with the
natural generalization of Brattka's notion of realizer reducibility.

\begin{lemma}\label{equivred}
Let $(X,\s_X)$, $(Y,\s_Y)$, $(Z,\s_Z)$, $(W,\s_W)$ be represented
spaces. Let $f: \sbsq X \toto Y$ and $g: \sbsq Z \toto W$ be
multi-valued functions. The following are equivalent
\begin{enumerate}[(i)]
  \item $f \cred g$;
  \item there exist computable functions $H: \sbsq \Bai \to
      \Bai$ and $K: \sbsq \Bai \times \Bai \to \Bai$ such that
      $p \mapsto K(p, (G \circ H)(p))$ is a realizer for $f$
      whenever $G$ is a realizer for $g$.
\end{enumerate}\end{lemma}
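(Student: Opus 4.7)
The plan is to verify both directions by translating between multi-valued functions and their realizers, using the standard pairing representation on $X \times W$ implicitly throughout.

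For $(i) \Rightarrow (ii)$, I would fix a computable realizer $H$ of $h$ and a computable realizer $K_0$ of $k$, and set $K(p, q) := K_0(p \oplus q)$. For any realizer $G$ of $g$ and any $p \in \dom(f \circ \s_X)$, a direct name-chase works: putting $x := \s_X(p) \in \dom(f) \sbsq \dom(h)$ and using $h(x) \sbsq \dom(g)$, we get $\s_Z(H(p)) \in h(x) \sbsq \dom(g)$, then $\s_W(G(H(p))) \in g(\s_Z(H(p))) \sbsq (g \circ h)(x)$, and finally $\s_Y(K(p, G(H(p)))) \in k(x, \s_W(G(H(p)))) \sbsq f(x)$, so $p \mapsto K(p, G(H(p)))$ is a realizer of $f$.

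For $(ii) \Rightarrow (i)$, I would define $h$ and $k$ as the multi-valued functions naturally realized by $H$ and by the uncurrying $K_0(p \oplus q) := K(p, q)$, namely
\[
h(x) := \{\s_Z(H(p)) : \s_X(p) = x,\ p \in \dom(H)\}
\]
and
\[
k(x, w) := \{\s_Y(K(p, q)) : \s_X(p) = x,\ \s_W(q) = w,\ (p, q) \in \dom(K)\},
\]
each with its natural domain. Both are computable by construction. Applying (ii) at arbitrary realizers $G$ of $g$ and arbitrary names $p$ of $x \in \dom(f)$ forces $\dom(H) \supseteq \s_X^{-1}(\dom(f))$ and $\s_Z(H(p)) \in \dom(g)$, so $x \in \dom(h)$ with $h(x) \neq \eps$ and $h(x) \sbsq \dom(g)$; hence $(g \circ h)(x)$ is well-defined. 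To see $k(x, (g \circ h)(x)) \sbsq f(x)$, one observes that every pair $(p, q) \in \dom(K)$ contributing an element to $k(x, w)$ for $w \in (g \circ h)(x)$ can be matched with a (typically discontinuous) realizer $G$ of $g$ for which $G(H(p)) = q$, so (ii) delivers $\s_Y(K(p, q)) \in f(x)$.

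The main obstacle I expect in the reverse direction is the uniformity of this last step: a realizer $G$ of $g$ with $G(H(p)) = q$ exists only when $\s_W(q) \in g(\s_Z(H(p)))$, whereas $w \in (g \circ h)(x) = \bigcup_{z \in h(x)} g(z)$ only provides $w \in g(z^*)$ for \emph{some} $z^* \in h(x)$, not necessarily for $z^* = \s_Z(H(p))$. This is the point at which the multi-valued generalization of Brattka's single-valued equivalence requires genuine care; the universal quantification over \emph{all} realizers $G$ (continuous or not, computable or not) in (ii), together with the definition of $h$ that absorbs the variability of $\s_Z(H(p))$ across different names of $x$ into the multi-valued output $h(x)$, is precisely what makes the matching go through and yields the inclusion $k(x, (g \circ h)(x)) \sbsq f(x)$ together with the nonemptiness of $k(x,w)$ on the relevant set.
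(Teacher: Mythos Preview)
Your approach is line-for-line the paper's: the same choice of $H,K$ for $(i)\Rightarrow(ii)$, the same canonical $h(x)=\{\s_Z(H(p)):\s_X(p)=x\}$ and $k(x,w)=\{\s_Y(K(p,p')):\s_X(p)=x,\ \s_W(p')=w\}$ for $(ii)\Rightarrow(i)$, and the same device of manufacturing a realizer $G$ of $g$ with $G(H(p))=p'$ in order to place $\s_Y(K(p,p'))$ inside $f(x)$. You also deserve credit for isolating the delicate point that the paper passes over in silence: such a $G$ exists only if $\s_W(p')\in g(\s_Z(H(p)))$, whereas membership $w\in(g\circ h)(x)$ only gives $w\in g(z^*)$ for some $z^*\in h(x)$ that need not equal $\s_Z(H(p))$.

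Your final paragraph, however, does not close this gap; invoking the universal quantifier over realizers and saying that $h$ ``absorbs the variability'' is not an argument, and in fact the specific $h,k$ you (and the paper) construct can fail to witness $(i)$. Take $X=\{x\}$ with exactly two names $p_1,p_2$; let $Z=\{z_1,z_2\}$, $W=\{w_1,w_2\}$, $Y=\{y_1,y_2\}$ each have a unique name per point; set $f(x)=\{y_1\}$ and $g(z_i)=\{w_i\}$. Put $H(p_i)=$ the name of $z_i$ and let $K(p_i,s_j)$ name $y_1$ when $i=j$ and $y_2$ otherwise. Condition (ii) holds, since the unique realizer $G$ of $g$ forces $j=i$. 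But with the constructed $h,k$ one gets $h(x)=\{z_1,z_2\}$, hence $(g\circ h)(x)=\{w_1,w_2\}$, and $y_2\in k(x,w_2)\subseteq k(x,(g\circ h)(x))\not\subseteq f(x)$. So the step ``choose a realizer $G$ with $G(H(p))=p'$'' is illegitimate exactly where you feared, and the argument as written (yours and the paper's alike) does not establish $(ii)\Rightarrow(i)$; this extensionality mismatch is the reason later treatments take condition (ii) as the \emph{definition} of the reducibility.
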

\begin{proof}
First assume $f \cred g$ and let the computable multi-valued
functions $h$ and $k$ witness this. Let $H$ and $K$, respectively,
be computable realizers for $h$ and $k$. Suppose $G$ is a realizer
for $g$; we claim that $p \mapsto K(p,(G \circ H)(p))$ is a realizer
for $f$. In fact if $p \in \dom (f \circ \s_X)$ then $(\s_Z \circ H)
(p) \in (h \circ \s_X) (p)$ and hence $(\s_W \circ G \circ H) (p)
\in (g \circ h \circ \s_X) (p)$, so that $(\s_Y \circ K) (p, (G
\circ H)(p)) \in k(\s_X(p), (g \circ h \circ \s_X) (p)) \subseteq (f
\circ \s_X) (p)$.

Now suppose (ii) holds and let $H$ and $K$ witness this. Define $h$
and $k$ by
\begin{gather*}
    h(x) = \set{(\s_Z \circ H)(p)}{\s_X (p) = x} \quad \text{and}\\
    k(x,w) = \set{(\s_Y \circ K) (p,p')}{\s_X (p) = x \land \s_W (p') = w}.
\end{gather*}
Since $H$ and $K$ are computable realizers for $h$ and $k$
respectively, the latter are computable multi-valued functions.

To check that $h$ and $k$ witness $f \cred g$ let $x \in \dom(f)$
and suppose $y \in k(x, (g \circ h)(x))$. There exist $z \in h(x)$
and $w \in g(z)$ such that $y \in k(x,w)$. By definition of $k$ let
$p,p'$ be such that $\s_X(p) = x$, $\s_W(p') = w$ and $y = (\s_Y
\circ K) (p,p')$. Let $G$ be a realizer for $g$ such that $p' = (G
\circ H) (p)$. Then
\[
y = (\s_Y \circ K) (p,p') = (\s_Y \circ K) (p, (G \circ H) (p)) \in (f \circ \s_X) (p) = f(x),
\]
where membership follows from the fact that $p \mapsto K(p, (G \circ
H)(p))$ is a realizer for $f$. We have thus shown $k(x, (g \circ
h)(x)) \subseteq f(x)$, as needed.
\end{proof}

Since transitivity of $\cred$ for multi-valued functions is not
immediately obvious, we state it explicitly.

\begin{lemma}\label{trans-cred}
$\cred$ is transitive.
\end{lemma}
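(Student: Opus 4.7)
The plan is to pass through the realizer formulation provided by Lemma \ref{equivred}, compose the two reductions at the level of Baire space, and then translate back. Suppose $f:\sbsq X \toto Y$, $g:\sbsq Z \toto W$ and $\ell:\sbsq U \toto V$ with $f \cred g$ and $g \cred \ell$. By Lemma \ref{equivred} there are computable $H_1:\sbsq \Bai \to \Bai$ and $K_1:\sbsq \Bai \times \Bai \to \Bai$ such that $p \mapsto K_1(p, G(H_1(p)))$ is a realizer of $f$ for every realizer $G$ of $g$, together with analogous computable $H_2, K_2$ witnessing $g \cred \ell$.

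Given any realizer $L$ of $\ell$, set $G_L(q) = K_2(q, L(H_2(q)))$; by the second hypothesis $G_L$ is a realizer of $g$. Substituting into the first reduction, the map $p \mapsto K_1(p, G_L(H_1(p)))$ is then a realizer of $f$. I would therefore define
\[
H(p) = H_2(H_1(p)), \qquad K(p, p') = K_1(p, K_2(H_1(p), p')),
\]
both of which are computable as compositions of computable functions. By construction $K(p, L(H(p))) = K_1(p, K_2(H_1(p), L(H_2(H_1(p)))))$, which realizes $f$ for every realizer $L$ of $\ell$. A second application of Lemma \ref{equivred} yields $f \cred \ell$.

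There is no genuine obstacle here, only one subtlety worth flagging: the inner call $K_2(H_1(p), \,\cdot\,)$ must be supplied with the \emph{intermediate} Baire-space name $H_1(p)$ rather than with $H(p) = H_2(H_1(p))$. This is exactly the reason why the second coordinate of $k$ in Definition \ref{def:redmulti}, and hence of $K$ in Lemma \ref{equivred}(ii), is allowed to depend on the original input as well as on the output of $g$; once this bookkeeping is in place the argument is a routine diagram chase.
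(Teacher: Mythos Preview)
Your argument is correct. The paper takes a slightly more direct route: it stays at the level of Definition \ref{def:redmulti} and simply observes that if $h,k$ witness $f \cred g$ and $h',k'$ witness $g \cred \ell$, then $h' \circ h$ together with $(x,v) \mapsto k(x,k'(h(x),v))$ witness $f \cred \ell$. Your proof is the realizer-level transcription of exactly this composition, obtained by passing through Lemma \ref{equivred} twice. The advantage of your detour is that you only ever compose honest single-valued functions on $\Bai$, so there is no need to unwind the definition of multi-valued composition; the advantage of the paper's version is that it avoids invoking the equivalence lemma and is a one-line check. Both arguments hinge on the same bookkeeping point you flagged, namely that the inner ``$k$'' must be fed the intermediate datum $h(x)$ (respectively $H_1(p)$) rather than the fully pushed-forward input.
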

\begin{proof}
Let $f: \sbsq X \toto Y$, $g: \sbsq Z \toto W$ and $\ell: \sbsq U
\toto V$ be multi-valued functions. Let $h$ and $k$ witness $f \cred
g$, while $h'$ and $k'$ witness $g \cred \ell$. It is easy to check
that $h' \circ h$ and the map $(x,v) \mapsto k(x,k'(h(x),v))$
witness $f \cred \ell$.
\end{proof}

Thus $\cred$ is a preorder (reflexivity is obvious) and we can give
the usual definitions.

\begin{definition}
As usual we use $\creds$ and $\ceq$ for the strict relation and the
equivalence relation arising from $\cred$.
\end{definition}

We now prove two simple Lemmas about $\cred$.

\begin{lemma}\label{superset}
Let $f,g: \sbsq X \toto Y$ be multi-valued functions such that
$\dom(f) \subseteq \dom(g)$ and $g(x) \subseteq f(x)$ for every $x
\in \dom(f)$. Then $f \cred g$.
\end{lemma}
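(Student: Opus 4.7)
The plan is to witness the reduction using the most obvious choices of $h$ and $k$, namely an identity and a projection, and then observe that the hypotheses of the lemma make the resulting inclusion automatic.

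More precisely, I would take $h: X \toto X$ to be the identity, regarded as a (single-valued, hence multi-valued) function with $\dom(h) = X$, and I would take $k: \sbsq X \times Y \toto Y$ to be the projection onto the second coordinate, i.e.\ $k(x,y) = \{y\}$ with $\dom(k) = X \times Y$. Both of these are patently computable: a realizer for $h$ is the identity on \Bai, while a realizer for $k$ can be obtained from the computable pairing map $p \oplus q \mapsto q$ used to separate the two coordinates of a name for a pair (this exists since the product representation on $X \times Y$ is built from $\s_X$ and $\s_Y$ via $\oplus$).

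Next I would verify the defining inclusion of Definition \ref{def:redmulti}. Fix $x \in \dom(f)$. By hypothesis $\dom(f) \subseteq \dom(g)$, so $x \in \dom(g)$. Since $h(x) = \{x\} \subseteq \dom(g)$, the composition $g \circ h$ is defined at $x$ and $(g \circ h)(x) = g(x)$, so in particular $x \in \dom(g \circ h)$. Then
\[
k\bigl(x,(g \circ h)(x)\bigr) = k\bigl(x, g(x)\bigr) = \bigcup_{y \in g(x)} \{y\} = g(x) \subseteq f(x),
\]
where the final inclusion uses the second hypothesis of the lemma. This confirms $f \cred g$.

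There is essentially no obstacle here: the entire argument is a direct unwinding of Definitions \ref{def:composition} and \ref{def:redmulti}, relying on the trivial fact that identity and projection maps are computable on represented spaces. The content of the lemma is purely that enlarging the set of admissible answers can only make a problem easier, and both hypotheses ($\dom(f) \subseteq \dom(g)$ to ensure $g \circ h$ is defined on $\dom(f)$, and $g(x) \subseteq f(x)$ to ensure the output is acceptable for $f$) are used in exactly one line each.
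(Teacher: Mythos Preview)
Your proof is correct and follows exactly the same approach as the paper, which simply states that the identity on $X$ and the projection onto the second coordinate from $X \times Y$ witness the reduction. You have merely spelled out the verification that the paper leaves as ``straightforward to check.''
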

\begin{proof}
It is straightforward to check that the identity on $X$ and
projection on the second coordinate from $X \times Y$ witness this.
\end{proof}

\begin{lemma}\label{compositionwithcomputable}
Let $h: \sbsq X \toto Y$ and $g: \sbsq Z \toto W$ be computable
multi-valued functions. For any multi-valued function $f: \sbsq Y
\toto Z$ we have $(g \circ f \circ h) \cred f$.
\end{lemma}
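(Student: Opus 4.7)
The plan is to exhibit explicit witnesses $h'$ and $k'$ in Definition \ref{def:redmulti} that simply reuse the assumed computable functions $h$ and $g$. Intuitively, to reduce the three-stage composition $g \circ f \circ h$ to $f$ on its own, we use $h$ as the preprocessing map (feeding $x$ into $h$ to land inside $\dom(f)$), then let the oracle $f$ act, and finally use $g$ as the postprocessing map (ignoring the copy of $x$ kept by the reducibility framework).

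More precisely, I would take $h' : \sbsq X \toto Y$ to be $h$ itself, and $k' : \sbsq X \times Z \toto W$ to be the multi-valued function given by $k'(x,z) = g(z)$ on its natural domain $X \times \dom(g)$. Both are computable: $h'$ by hypothesis, and $k'$ because a $(\s_X,\s_Z)\times\s_W$-realizer for $k'$ is obtained by projecting onto the second (Baire) coordinate and applying a computable realizer for $g$, using the evaluation property from Section \ref{subsect:rcf} (or directly from the definition of realizer).

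The remaining step is a domain-and-inclusion check. Fix $x \in \dom(g \circ f \circ h)$. By Definition \ref{def:composition} this means $x \in \dom(h)$, $h(x) \subseteq \dom(f)$, and $(f \circ h)(x) \subseteq \dom(g)$. Consequently $(f \circ h')(x) = (f \circ h)(x)$ is defined and lies in $\dom(g)$, so
\[
    k'\bigl(x,(f \circ h')(x)\bigr) \;=\; \bigcup_{z \in (f\circ h)(x)} g(z) \;=\; (g \circ f \circ h)(x),
\]
which in particular gives the required inclusion $k'(x,(f \circ h')(x)) \subseteq (g \circ f \circ h)(x)$.

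There is no real obstacle here; the only point requiring a moment of care is making sure that the domain conditions of Definition \ref{def:composition} propagate correctly through the two-level composition so that $k'(x, (f \circ h')(x))$ actually equals $(g \circ f \circ h)(x)$ rather than merely being contained in it. Once the domain bookkeeping is handled, the reduction is witnessed by the pair $(h, k')$ essentially by inspection.
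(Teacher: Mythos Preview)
Your proof is correct and follows exactly the paper's approach: the paper's one-line proof states that $h$ and $(x,z) \mapsto g(z)$ witness the reduction, which is precisely your choice of $h'$ and $k'$. You have simply spelled out the domain bookkeeping that the paper leaves as ``straightforward to check''.
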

\begin{proof}
It is straightforward to check that $h$ and $(x,z) \mapsto g(z)$
witness this.
\end{proof}

Our definition of \SI0k-computability for multi-valued functions is
motivated by the characterization of this notion for singled-valued
functions of Theorems 5.5 and 7.6 (one for each notion of
reducibility) in \cite{Bra05}. The reader should however be aware
that Brattka defined a notion of \SI0k-computability for
multi-valued functions which is properly stronger than ours
(\cite[Definition 3.5]{Bra05}).

\begin{definition}[\SI0k-computable and \SI0k-complete]\label{kcompcompl}
Let $k \geq 1$ and $(X,\s_X)$, $(Y,\s_Y)$ be represented spaces. A
multi-valued function $f: \sbsq X \toto Y$ is \emph{\SI0k-computable}
if $f \cred C_{k-1}$, and \emph{\SI0k-complete} if $f \ceq C_{k-1}$.
\end{definition}

Lemma \ref{equivred} above and Theorem 7.6 in \cite{Bra05} imply
that a multi-valued function is \SI0k-computable if and only if it
has a \SI0k-computable realizer.

\section{Reverse mathematics and computable analysis}\label{section:rm&ca}

\subsection{Correspondence between statements of second order arithmetic and functions}
Many mathematical statements expressed in \LL\ have the form
\[
\forall X (\psi(X) \implies \exists Y\, \varphi (X,Y))
\]
where $X$ and $Y$ range over sets of natural numbers. Here are a few
examples (we use the standard coding techniques for expressing in
\RCA\ functions, real numbers, sequences, etc.).
\begin{enumerate}[(1)]
  \item\label{ex:path} The statement of Weak K\"{o}nig's Lemma (the
      main axiom of \WKL) is
\[
\forall T (T \text{ is an infinite binary tree} \implies \exists p\,
p \text{ is an infinite path in } T).
\]
  \item\label{ex:range} The existence of the range of any
      function is
\[
\forall p (p: \N \to \N \implies \exists Y\,
\forall m (m \in Y \iff \exists n\, m = p(n))).
\]
  \item\label{ex:sup} The existence of the least upper bound for
      any sequence in $I = [0,1]$ is
\[
\forall \la x_n : n \in \N\ra (\forall n\, x_n \in I \implies
\exists x (\forall n\, x_n \leq x \land \forall k\, \exists n\,
x < x_n + 2^{-k})).
\]
  \item Separation of disjoint ranges is
\[
\forall p,q (p,q: \N \to \N \land \forall n,m\, p(n) \neq q(m)
\implies \exists Y\, \forall n (p(n) \in Y \land q(n) \notin Y)).
\]
  \item\label{ex:HBc} The statement of the Heine-Borel
      compactness of the interval $I$ is
\begin{multline*}
\forall \la I_k : k \in \N\ra (\forall k\, I_k \subseteq I
\text{ is an interval with rational endpoints} \land\\
\land I = \bigcup_{k \in \N} I_k \implies \exists n\, I =
\bigcup_{k<n} I_k).
\end{multline*}
  \item The statement of the separable Hahn-Banach Theorem is
\begin{multline*}
\forall X, A, f (X \text{ is a separable Banach space} \land A
\text{ is a closed linear subspace of } X \land\\
\land f \text{ is a bounded linear functional on } A \implies\\
\implies \exists g (g \text{ is a bounded linear functional on
$X$ extending } f \land \|g\| = \|f\|)).
\end{multline*}
\end{enumerate}

If $\forall X (\psi(X) \implies \exists! Y\, \varphi (X,Y))$ holds
(this is the case in (\ref{ex:range}) and (\ref{ex:sup}) above) it
is natural to consider the partial function $f: \sbsq \mathcal{P}
(\N) \to \mathcal{P} (\N)$ with $\dom (f) = \set{X \in \mathcal{P}
(\N)}{\psi(X)}$ such that $\varphi (X, f(X))$ for every $X \in
\dom(f)$. When the uniqueness condition fails we could consider all
possible functions with the properties above. However it seems more
useful to study the multi-valued function $f: \sbsq \mathcal{P} (\N)
\toto \mathcal{P} (\N)$ defined by $f(X) = \set{Y}{\varphi(X,Y)}$
for all $X$ such that $\psi(X)$.

\begin{remark}
In many cases, including some of the examples given above, it is
best to view the domain and the range of $f$ as represented spaces
different from $\mathcal{P} (\N)$, thus unraveling the coding used
in the reverse mathematics approach. E.g.\ the functions arising
from examples (\ref{ex:path}) and (\ref{ex:sup}) are best viewed
respectively as a partial multi-valued function from $\mathcal{P}
(\Seqd)$ to \Can\ and a total single-valued function from $I^\N$ to
$I$.
\end{remark}

We have thus associated to the mathematical statement expressed in
\LL\ a function between represented spaces which can be studied
within the framework of computable analysis. Notice that the lack of
restrictions on the complexity of $\psi$ corresponds to the
principle of computable analysis stating that \lq\lq the user is
responsible for the correctness of the input\rq\rq\ (see
\cite[\S6]{GSW} for a discussion).

We can also reverse the procedure. If we want to study from the
viewpoint of computable analysis a multi-valued function $f : \sbsq
X \toto Y$, we can look at the reverse mathematics of the statement
\[
\forall x (x \in \dom(f) \implies \exists y \in Y\, y \in f(x)),
\]
with the hope of gaining some useful insight. E.g.\ if $k \geq 1$,
from $C_k$ we obtain the statement
\begin{multline*}
\forall p (p \in \Bai \implies \exists q \in \Can\, \forall n \\
(q(n) = 0 \iff \exists n_k\, \forall n_{k-1}\, \exists n_{k-2}\,
\dots Qn_1\, p(\la n, n_k, n_{k-1}, \dots, n_1 \ra) \neq 0)),
\end{multline*}
which is easily seen to be equivalent (over \RCA) to
\SI0k-comprehension.

In any case, we expect some connection between the proof-theoretic
strength of the statement and the computability strength of the
function.

Notice that statements corresponding to functions belonging to
different degrees of incomputability may collapse into a single
system of reverse mathematics. Indeed, for any $k \geq 1$, the
statement obtained above in correspondence with $C_k$ is equivalent
to arithmetic comprehension. This means that each $C_k$ with $k \geq
1$ corresponds to \ACA, while it is well-known that $C_k \creds
C_{k+1}$. In other words, at the level of \ACA\ computable analysis
is finer than reverse mathematics.

The correspondence between proof-theoretic and computable
equivalence is more useful when we are at the level of \RCA\ or
\WKL. First, the computable sets are the intended $\omega$-model of
\RCA, which is therefore a formal version of computable mathematics.
Hence we expect that a statement provable in \RCA\ gives rise to a
computable function. Second, we expect most statements equivalent to
\WKL\ to give rise to computably equivalent uncomputable functions.

Sometimes these expectations are fulfilled, and some reverse
mathematics proofs even translate naturally into a computable
analysis proof. This is the case with Theorems \ref{Sep/Path} and
\ref{reversal} below. However the existence of this translation
cannot be taken for granted, and for each direction of the
correspondence we will give examples of failures. In other words, no
automatic translation from the reverse mathematics literature into
computable analysis, or vice versa, is possible. This phenomenon is
a consequence of the different methods and goals of the two
approaches. On one hand, the subsystems of second order arithmetic
studied in reverse mathematics uses freely classical principles with
no algorithmic content, such as excluded middle and proofs by
contraposition. On the other hand, the algorithms of computable
analysis assume the existence of the objects they have to compute,
without the need of proving it. The examples of failure of the
correspondence below highlight these differences.

\subsection{Success of the correspondence}
An often-used equivalent of \ACA\ is the statement that the range of
every one-to-one function from $\N$ to $\N$ is a set. Using the
approach described above this translates into the following
function.

\begin{definition}[\Ran]
Let $\Ran: \sbsq \Bai \to \Can$ be the function that maps any
one-to-one function to the characteristic function of its range,
i.e.
\[
\Ran(p) (n) = \left\{
                \begin{array}{ll}
                  1, & \text{if $\exists m\, p(m) =n$;} \\
                  0, & \text{otherwise.}
                \end{array}
              \right.
\]
for every injective $p: \N \to \N$ and every $n$.
\end{definition}

As expected, we have the following Lemma.

\begin{lemma}\label{Ran}
$\Ran \ceq C_1$.
\end{lemma}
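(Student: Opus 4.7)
The plan is to establish $\Ran\cred C_1$ and $C_1\cred\Ran$ separately, using that both problems amount to computing the characteristic function of a $\SI01$ set: $\Ran(p)$ is $\chi_{\ran(p)}$ with $\ran(p)$ c.e. uniformly in $p$, while $C_1(p) = 1-\chi_{A_p}$ where $A_p=\set{n}{\exists m\, p(\la n,m\ra)\neq 0}$ is c.e. uniformly in $p$.

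For $\Ran\cred C_1$, given an injective $p\in\dom(\Ran)$, I would set $h(p)=q$ with $q(\la n,m\ra)=1$ if $p(m)=n$ and $0$ otherwise, and $k(p,r)(n)=1-r(n)$. Then $C_1(h(p))(n)=0$ iff some $m$ satisfies $p(m)=n$, so $k(p,C_1(h(p)))$ equals $\Ran(p)$. Both $h$ and $k$ are clearly computable.

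For the reverse direction $C_1\cred\Ran$, given an arbitrary $p\in\Bai$, I would construct, uniformly in $p$, an injective total $g\colon\N\to\N$ whose range codes $A_p$ on a fixed slice of $\N$. Identifying $\N$ with $\{0,1\}\times\N$ via the pairing, the idea is to enumerate $A_p$ stage by stage, maintaining a queue of elements waiting to be output: at stage $s$, append any newly enumerated elements of $A_p$ to the queue, and if the queue is non-empty pop its head $n$ and set $g(s)=\la 0,n\ra$; otherwise set $g(s)=\la 1,t_s\ra$ for a fresh counter $t_s$. The resulting $g$ is computable from $p$, injective by construction, and satisfies $\la 0,n\ra\in\ran(g)$ iff $n\in A_p$. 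Setting $h(p)=g$ and $k(p,r)(n)=1-r(\la 0,n\ra)$ then recovers $C_1(p)$ from $\Ran(g)$.

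The only delicate point is the bookkeeping in the second reduction: one must spread possibly many new enumerations at a single c.e. stage over consecutive values of $g$ while keeping $g$ injective, total, and with $\{0\}\times\N$-entries in bijection with $A_p$. This is entirely routine once the queue construction above is written out explicitly, and both $h$ and $k$ are manifestly computable, yielding the reduction in the sense of Definition~\ref{def:redmulti}.
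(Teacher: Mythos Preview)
Your proof is correct and follows the same overall strategy as the paper: in both directions you reduce to computing the characteristic function of a uniformly $\SI01$ set, and your argument for $\Ran\cred C_1$ is word-for-word the paper's.

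For $C_1\cred\Ran$ there is a small implementation difference worth noting. You build the injective enumeration $g$ by a queue-and-padding construction, outputting $\la 0,n\ra$ for newly discovered elements of $A_p$ and $\la 1,t_s\ra$ as filler; this is the standard computability-theoretic trick for turning any c.e.\ enumeration into an injective one, and it works here. The paper instead writes down a closed-form injection directly: it sets $H(p)(\la n,m\ra)=\la n,0\ra$ when $m$ is the \emph{least} witness that $n\in A_p$, and $H(p)(\la n,m\ra)=\la n,m+1\ra$ otherwise. Injectivity is then immediate (each $\la n,0\ra$ is hit at most once, and the $\la n,m+1\ra$ values are pairwise distinct), and $\la n,0\ra\in\ran(H(p))\iff n\in A_p$ with no queue bookkeeping needed. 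Your approach is more general-purpose; the paper's is more explicit and sidesteps the ``delicate point'' you flag.
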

\begin{proof}
First we show $\Ran \cred C_1$. Given $p \in \dom(\Ran)$ let $H(p)
\in \Can$ be defined by $H(p) (\la n,m\ra) = 1$ if and only if $p(m)
= n$. $H: \sbsq \Bai \to \Can$ is computable and it is immediate
that $\Ran(p) (n) = 1- C_1(H(p))(n)$ for every injective $p$ and
$n$.\smallskip

We now show that $C_1 \cred \Ran$. Given $p \in \Bai$ let $H(p) \in
\Bai$ be defined by
\[
H(p) (\la n,m\ra) = \left\{
        \begin{array}{ll}
         \la n,0\ra & \text{if $p(\la n,m\ra) \neq 0$ and $\forall k<m\, p(\la n,k\ra)=0$;}\\
         \la n,m+1\ra & \text{otherwise.}
        \end{array} \right.
\]
The function $H: \Bai \to \Bai$ is computable with $\ran(H)
\subseteq \dom(\Ran)$ (i.e.\ each $H(p)$ is one-to-one). Moreover
$C_1 (p)(n) = 1 - \Ran(H(p)) (\la n,0\ra)$.
\end{proof}

A basic example of reverse mathematics deals with the existence of
least upper bounds of bounded sequences of real numbers. Indeed,
this mathematical principle turns out to be equivalent to \ACA\
(\cite[Theorem III.2.2]{sosoa}). We now show how this equivalence
translates into computable analysis.

\begin{definition}[\Sup]
Let $\Sup: I^\N \to I$ be the function that maps any sequence in
$I^\N$ to its least upper bound.
\end{definition}

\begin{theorem}\label{sup}
$C_1 \ceq \Sup$.
\end{theorem}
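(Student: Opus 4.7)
The plan is to establish the two reductions $\Sup \cred C_1$ and $C_1 \cred \Sup$ directly from Definition~\ref{def:redmulti}. For $\Sup \cred C_1$, I exploit that for any rational $q$, the predicate ``$\sup_n x_n > q$'' is $\Sigma^0_1$ in any tuple of Cauchy names for $(x_n) \in I^{\N}$, since it is equivalent to $\exists n\,\exists i\ (a_\Q(p_n(i)) - 2^{-i} > q)$. Using the enumeration $a_\Q$ of the rationals, I let the preprocessor $H$ send a name $p$ of $(x_n)$ to a function $H(p) \in \Bai$ with $H(p)(\la k, m\ra) \neq 0$ exactly when a witness to ``$\sup_n x_n > a_\Q(k)$'' has been found within the first $m$ stages of a standard search. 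Then $C_1(H(p))(k) = 1$ iff $\sup_n x_n \leq a_\Q(k)$, so $C_1(H(p))$ is the characteristic function of the set $T$ of indices of rational upper bounds for $\sup_n x_n$. The postprocessor then produces a Cauchy name of $\sup_n x_n$: at precision $2^{-N}$ it searches for the smallest multiple of $2^{-N-1}$ in $[0,1]$ whose index lies in $T$; the next-smaller multiple is not in $T$, hence witnesses a strict lower bound, and together the two bound $\sup_n x_n$ within $2^{-N}$.

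For $C_1 \cred \Sup$, given $p \in \Bai$ let $b_n = 1 - C_1(p)(n)$, so $b_n = 1$ iff $\exists m\ p(\la n, m\ra) \neq 0$. Define the computable nondecreasing sequence
\[
x_N = \sum_{n \leq N} 2 \cdot 3^{-n-1} \cdot [\,\exists m \leq N\ p(\la n, m\ra) \neq 0\,] \in [0,1],
\]
so that $\sup_N x_N$ equals $z = \sum_n 2 b_n 3^{-n-1}$. The point of using digits in $\{0,2\}$ base $3$ (as in Cantor's ternary set) is that, once the partial sum $s = \sum_{j < n} 2 b_j 3^{-j-1}$ of previously decoded bits is known, one has $z \in [s,\, s + 3^{-n-1}]$ when $b_n = 0$ and $z \in [s + 2 \cdot 3^{-n-1},\, s + 3^{-n}]$ when $b_n = 1$, with a positive gap in between. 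The postprocessor decodes each bit recursively by running the two semi-decidable comparisons ``$z < s + \tfrac{3}{2}\cdot 3^{-n-1}$'' and ``$z > s + \tfrac{3}{2}\cdot 3^{-n-1}$'' in parallel from a Cauchy name of $z$; since the midpoint lies in the gap, exactly one halts, yielding $b_n$ and hence $C_1(p)(n)$.

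I expect the main subtlety to be this second encoding. A naive base-$2$ encoding $z = \sum_n b_n 2^{-n-1}$ fails whenever the bits force $z$ to be a dyadic rational, since then the $n$-th bit admits two expansions and cannot be determined in finite time from a Cauchy name of $z$. The base-$3$ trick introduces a uniform gap that makes each bit effectively decidable; everything else is routine manipulation of Cauchy names and $\Sigma^0_1$ witness searches.
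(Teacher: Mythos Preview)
Your proposal is correct. The direction $\Sup \cred C_1$ is essentially the paper's argument: the paper computes, via $C_1$, the set $A=\set{\a\in\Q}{\exists n\,\a<x_n}$ of strict rational lower bounds and then does a dyadic bisection, while you compute the complementary set of rational upper bounds and bisect; the difference is cosmetic.

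For $C_1 \cred \Sup$ the approaches genuinely diverge. The paper does not reduce $C_1$ to $\Sup$ directly but factors through Lemma~\ref{Ran} ($\Ran\ceq C_1$) and shows $\Ran\cred\Sup$: given an injective $p$, it forms the nondecreasing rational sequence $x_m=\sum_{k\le m}2^{-(p(k)+1)}$, and from $x=\Sup(x_m)$ searches for some $k$ with $x-x_k<2^{-(n+1)}$; injectivity of $p$ then guarantees that $n\in\ran(p)$ iff $n$ already appears among $p(0),\dots,p(k)$. Your argument instead encodes the $\SI01$ bits $b_n=1-C_1(p)(n)$ directly as the ternary Cantor-set point $z=\sum_n 2b_n 3^{-n-1}$ and decodes them one by one using the gap between the two possible intervals at each level. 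The paper's route is the classical reverse-mathematics ``bound the search'' maneuver and reuses a lemma already in hand; your route is self-contained, avoids the detour through $\Ran$, and makes explicit why a naive base-$2$ encoding would fail. Both are clean; yours is arguably more transparent about the role of the Cauchy representation of the output.
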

\begin{proof}
We start by showing that $\Sup \cred C_1$. Given $(x_n) \in I^\N$
observe that it is easy to use $C_1$ to compute the (characteristic
function of the) set $A = \set{\a \in \Q}{\exists n\, \a < x_n}$.
Now we can computably define a sequence of rationals $(\a_k)$, where
$\a_k = \frac i{2^k}$ is such that $\frac i{2^k} \in A$ and $\frac
{i+1}{2^k} \notin A$. Clearly $(\a_k)$ is a Cauchy representation of
the real number $\Sup (x_n)$.\smallskip

By Lemma \ref{Ran}, to prove $C_1 \cred \Sup$ it suffices to show
that $\Ran \cred \Sup$. Given $p \in \dom(\Ran)$, define $(x_m) \in
I^\N$ by setting $x_m = \sum_{k \leq m} 2^{-(p(k)+1)}$. Given $x =
\Sup(x_m)$ we can define $q: \N \to \N$ by letting $q(n)$ to be the
least $k$ satisfying $x -x_k < 2^{-(n+1)}$. Then for every $n$ we
have
\[
\exists m\, p(m) = n \iff \exists m \leq q(n)\, p(m)=n.
\]
and hence
\[
\Ran (p)(n)= \left\{
    \begin{array}{ll}
        1 & \text{if $\exists m \leq q(n)\, p(m)=n$;}\\
        0 & \text{otherwise.}\\
\end{array} \right.
\]
This shows that, after using \Sup\ to obtain $x$, we can establish
whether $n \in \Ran (p)$ by first computing $q(n)$ by search, and
then checking finitely many values of $m$.
\end{proof}

\subsection{Failure of the correspondence}
We now exhibit some examples where the correspondence outlined above
fails. We first show that sometimes functions arising from
statements provable in \RCA\ are incomputable.

\begin{ex}\label{ex:Omega}
The following function, known as the
\lq\lq\emph{Allwissenheitsprinzip}\rq\rq\ (Principle of
Omniscience), has been studied in detail from the viewpoint of
computable analysis (\cite{Stein,Myl}).

Let $\Omega: \Bai \to \{0,1\}$ be defined by
\[
\Omega(p) = \left\{ \begin{array}{ll}
        0 & \text{if $p=\bar{0}$;}\\
        1 & \text{otherwise.}\\
\end{array} \right.
\]
The incomputability of $\Omega$ follows immediately from Lemma
\ref{comp->cont}. On the other hand, the statement corresponding to
$\Omega$ is
\[
\forall p \in \Bai\, \exists i \in \{0,1\} (i=0 \iff \forall n\,
p(n) =0),
\]
which is obviously provable in \RCA\ (and indeed just from the
excluded middle, except for the coding of functions in the language
of second order arithmetic).
\end{ex}

We now give another example, which is more mathematical, but again
has its roots in the use of classical logic in reverse mathematics.

\begin{ex}\label{ex:Sel}
Let $\mcA_-(\Can)$ be the hyperspace of closed subsets of \Can\
represented by negative information (see Definition
\ref{repr:closed} below) and $\Sel: \sbsq \mcA_-(\Can) \toto \Can$
be the multi-valued function which selects a point from nonempty
closed subsets of \Can. In other words $\Sel (A) = A$, but on the
left-hand side of this equality $A$ is a closed set (and hence a
single element in the hyperspace), while on the right-hand side it
is a set of points in the space \Can.

The statement corresponding to \Sel\ is $\forall A \in \mcA_-(\Can)
(A \neq \emptyset \implies \exists x\, x \in A)$, which is a
tautology, since $A \neq \emptyset$ is an abbreviation for $\exists
x\, x \in A$, and hence provable in \RCA. On the other hand it
follows from Theorem \ref{sel} below that, if we represent closed
sets with respect to negative information (coherently with the
reverse mathematics definition of closed set), $\Sel \ceq \Sep$ and
hence \Sel\ is incomputable.
\end{ex}

\begin{ex}\label{intermediate}
It is well-known that the intermediate value theorem is not
constructive, and it can be shown that the corresponding
multi-valued function is not computable (Brattka and Gherardi have
forthcoming results about the incomputability strength of this
function). On the other hand, a standard proof of the intermediate
value theorem which uses the excluded middle can be carried out in
\RCA\ (\cite[Theorem II.6.6]{sosoa}).
\end{ex}

We now give an example of the opposite phenomena, i.e.\ a theorem
which is not provable in \RCA\ but corresponds to a computable
function.

\begin{ex}\label{compact}
The Heine-Borel compactness of the interval $I$ is Example
(\ref{ex:HBc}) at the beginning of this Section. In reverse
mathematics it is well-known that this statement is equivalent to
\WKL\ (\cite[Theorem IV.1.2]{sosoa}). On the other hand in
computable analysis it is well-known that the function which maps
each countable open covering of $I$ consisting of intervals with
rational endpoints to a finite subcovering is computable
(\cite{Weih00}). We sketch the proof, to emphasize the difference
between the reverse mathematics and the computable analysis
approaches in this case.

There exists a computable enumeration $(\mathfrak{C}_n)$ of all
finite open coverings of $I$ consisting of intervals with rational
endpoints (in \RCA\ we can even prove, e.g.\ using the ideas of the
last part of the proof of Lemma \ref{product} below, that the set of
all these finite open coverings does exist). If we are given an
(infinite) open covering $(U_k)$ of $I$, where each $U_k$ is an
interval with rational endpoints, it suffices to search for $j, n \in
\N$ such that any interval in $\mathfrak{C}_n$ is $U_k$ for some $k
\leq j$. Then $\la U_k: k \leq j \ra$ is the desired finite
subcovering.

In this proof our knowledge of the compactness of $I$ insures that
the search will sooner or later succeed. From the reverse
mathematics viewpoint, the algorithm can be defined in \RCA, but the
proof of its termination requires \WKL.
\end{ex}

\section{The multi-valued function \Sep}\label{section:Sep}

For the reader's convenience, we repeat here the definition of \Sep\
given in the introduction.

\begin{definition}[\Sep]\label{def:sep}
Let $\Sep: \sbsq \Bai \times \Bai \toto \Can$ be defined by
$\dom(\Sep) = \set{(p,q) \in \Bai \times \Bai}{\forall n\, \forall
m\, p(n) \neq q(m)}$,
\[
\Sep (p,q) = \set{r \in \Can}{\forall n (r(p(n)) = 0 \land r(q(n))
= 1)}.
\]
Thus $\Sep (p,q)$ is the set of the characteristic functions of the
sets separating $\ran(p)$ and $\ran(q)$.
\end{definition}

\subsection{\Sep, \Path\ and other incomputable functions}
The following fact follows from standard facts in computability
theory ($\Omega$ was defined in Example \ref{ex:Omega}).

\begin{lemma}\label{Sep:incomp}
$\Sep \ncred \Omega$.
\end{lemma}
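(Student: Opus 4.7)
The plan is to argue by contradiction, using the classical fact that there exist disjoint c.e.\ sets $A,B\subseteq\N$ that are \emph{computably inseparable}, i.e.\ admit no computable set $S$ with $A\subseteq S$ and $S\cap B=\emptyset$. I will show that $\Sep \cred \Omega$ would manufacture a computable separator of any such pair.

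By Lemma \ref{equivred}, a reduction $\Sep \cred \Omega$ would furnish computable $H : \sbsq \Bai \to \Bai$ and $K : \sbsq \Bai \times \Bai \to \Bai$ such that, for every realizer $G$ of $\Omega$, the map $p \mapsto K(p, G(H(p)))$ realizes $\Sep$. I would then pick total computable enumerations $p_0,q_0\in\Bai$ of $A,B$, so that $\pi = p_0 \oplus q_0$ is a computable name for $(p_0,q_0) \in \dom(\Sep)$, and observe that $H(\pi)$ is a computable element of $\Bai$. The key step is to choose a convenient (necessarily non-computable) realizer $G$ of $\Omega$: namely, set $G(\bar{0}) = \bar{0}$ and $G(p) = \eta$ for $p \neq \bar{0}$, where $\eta$ is some fixed computable name for $1 \in \{0,1\}$. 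Then $G(H(\pi)) \in \{\bar{0},\eta\}$ is computable regardless of which case holds, whence $K(\pi, G(H(\pi)))$ is a computable name of an element of $\Sep(p_0,q_0)$ --- a computable separator of $A$ and $B$, contradicting the choice of $A,B$.

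The only point requiring care is the status of $G$: by Example \ref{ex:Omega}, $\Omega$ has no computable realizer, so one might initially worry that the composition hides genuine incomputability. The resolution is that although $G$ itself is discontinuous as a function on $\Bai$, its range can be chosen to consist of two canonical computable names, so $G(H(\pi))$ is computable for each specific computable input $\pi$, even if $G$ is not uniformly computable. This is the \lq\lq one bit of $\SI01$ information\rq\rq\ intuition for why $\Omega$ is too weak to account for the infinite amount of non-computable information needed to build a separator of computably inseparable c.e.\ sets.
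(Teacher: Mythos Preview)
Your argument is correct and follows the same core idea as the paper: fix computable $p_0,q_0$ with disjoint computably inseparable ranges, and exploit the fact that the single value returned by $\Omega$ can be replaced by a computable constant, so that the purported reduction would output a computable separator.

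The differences are mainly in packaging and generality. The paper works directly with Definition~\ref{def:redmulti} rather than passing through Lemma~\ref{equivred}: it observes that $(f\circ h)(p_0,q_0)\subseteq\N$, picks any $n$ in this set, and feeds $((p_0,q_0),n)$ to $k$; no realizer $G$ needs to be constructed. This yields the stronger statement $\Sep\ncred f$ for \emph{every} $f:\sbsq X\to\N$, of which $\Omega$ is a special case. Your route via a two-valued realizer $G$ is perfectly valid but is tailored to $\Omega$, and the detour through Lemma~\ref{equivred} is not needed once one sees that the second argument to $k$ ranges over a discrete set and can simply be hard-coded.
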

\begin{proof}
We show that $\Sep \ncred f$ for any $f: \sbsq X \to \N$. Towards a
contradiction, suppose $\Sep \cred f$ and let $h: \sbsq \Bai \times
\Bai \toto X$ and $k: \sbsq (\Bai \times \Bai) \times \N \toto \Can$
be such that $k((p,q), (f \circ h) (p,q)) \subseteq \Sep(p,q)$ for
every $(p,q) \in \dom(\Sep)$. In particular this holds for
$(p_0,q_0)$, where $p_0,q_0 \in \Bai$ are computable functions with
disjoint, yet computably inseparable, ranges. Since $(f \circ h)
(p_0,q_0) \subseteq \N$, to compute an element of $\Sep(p_0,q_0)$ we
can give as input to $k$ the pair $((p_0,q_0), n)$ for some $n \in
(f \circ h) (p_0,q_0)$. The resulting characteristic function is
computable, a contradiction.
\end{proof}

\begin{cor}
\Sep\ is not computable.
\end{cor}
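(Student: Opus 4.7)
The plan is to derive the corollary immediately from Lemma \ref{Sep:incomp} by observing a trivial fact about computable multi-valued functions: if $f$ is computable and $g$ is any total multi-valued function, then $f \cred g$. Indeed, in Definition \ref{def:redmulti} one is free to choose $h$ to be a constant computable function taking values in $\dom(g)$ and to let $k$ ignore its second argument entirely, at which point the computability of $k$ reduces to that of $f$.

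Applied to \Sep: assuming toward contradiction that \Sep\ is computable, we would set $h(p,q) = \bar{0}$ (which lies in $\dom(\Omega) = \Bai$) and $k((p,q),i) = \Sep(p,q)$. Then $k$ is computable precisely because \Sep\ is, and
\[
k\bigl((p,q),\, (\Omega \circ h)(p,q)\bigr) = \Sep(p,q) \subseteq \Sep(p,q)
\]
for every $(p,q) \in \dom(\Sep)$, witnessing $\Sep \cred \Omega$. This contradicts Lemma \ref{Sep:incomp}, which asserts $\Sep \ncred \Omega$.

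There is no real obstacle here; the only point worth checking is that our notion of computable reducibility (Definition \ref{def:redmulti}) genuinely allows such a trivial reduction, which is clear by direct inspection of the definition. In fact this argument shows the stronger statement that any computable multi-valued function reduces to every total multi-valued function, of which the corollary is the contrapositive specialized to \Sep\ and $\Omega$.
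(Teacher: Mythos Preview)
Your proof is correct and takes essentially the same approach as the paper: the paper's one-line argument is ``It is easy to see that $f \cred \Omega$ for all computable multi-valued functions $f$,'' and you have simply spelled out this reduction explicitly (constant $h$, and $k$ ignoring its second coordinate), deriving the contradiction with Lemma~\ref{Sep:incomp} in the same way.
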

\begin{proof}
It is easy to see that $f \cred \Omega$ for all computable
multi-valued functions $f$.
\end{proof}

On the other hand, \Sep\ is computably reducible to $C_1$ (we will
show in Corollary \ref{c1path} that $\Sep \creds C_1$).

\begin{lemma}\label{PC}
$\Sep \cred C_1$.
\end{lemma}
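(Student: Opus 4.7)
The plan is to exhibit explicit computable maps $h$ and $k$ witnessing $\Sep \cred C_1$. The key observation is that $C_1$ can compute the characteristic function of the complement of the range of any $p \in \Bai$: given such $p$, the set $\N \setminus \ran(p)$ is co-r.e.\ with a uniformly computable enumeration of its complement, and $C_1$ is exactly the tool that converts such an enumeration into the characteristic function.

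First I would define $h: \sbsq \Bai \times \Bai \to \Bai$ on $\dom(\Sep)$ by
\[
h(p,q)(\la n, m\ra) = \begin{cases} 1 & \text{if } p(m) = n,\\ 0 & \text{otherwise.}\end{cases}
\]
Clearly $h$ is (single-valued and) computable. By the definition of $C_1$, for every $(p,q) \in \dom(\Sep)$ and every $n$ we have $C_1(h(p,q))(n) = 0$ iff $\exists m\, h(p,q)(\la n,m\ra) \neq 0$ iff $n \in \ran(p)$. Hence $C_1(h(p,q))$ is the characteristic function of $\N \setminus \ran(p)$.

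Next, since $(p,q) \in \dom(\Sep)$ forces $\ran(p) \cap \ran(q) = \emptyset$, the value $r := C_1(h(p,q))$ satisfies $r(p(n)) = 0$ and $r(q(n)) = 1$ for every $n$; in particular $r \in \Can$ and $r \in \Sep(p,q)$. So it suffices to take $k: (\Bai \times \Bai) \times \Bai \to \Can$ to be the projection on the second coordinate (which is computable as a map into $\Can$ on the relevant inputs, since the second coordinate will always be a $\{0,1\}$-valued sequence). Then $k((p,q), (C_1 \circ h)(p,q)) = C_1(h(p,q)) \in \Sep(p,q)$, which gives the desired inclusion $k((p,q),(C_1 \circ h)(p,q)) \subseteq \Sep(p,q)$ for all $(p,q) \in \dom(\Sep)$.

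There is no real obstacle here: the construction is essentially the uniform computation of the characteristic function of the complement of an r.e.\ set from its enumeration, which is precisely what $C_1$ packages. The only point worth flagging is that $\dom(\Sep)$ is a proper subset of $\Bai \times \Bai$, so one must verify (as done above using the disjointness hypothesis) that $C_1 \circ h$ actually lands inside $\Sep(p,q)$ rather than merely in $\Can$.
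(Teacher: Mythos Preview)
Your proof is correct and is essentially identical to the paper's: you define the very same computable map $h(p,q)(\la n,m\ra)=1$ iff $p(m)=n$, verify that $C_1(h(p,q))$ is the characteristic function of $\N\setminus\ran(p)$, and conclude via the disjointness hypothesis that $C_1(h(p,q))\in\Sep(p,q)$. The only difference is cosmetic---you make the projection map $k$ explicit, while the paper leaves it implicit.
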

\begin{proof}
We define the computable function $h: \Bai \times \Bai \to \Bai$ by
\[
h(p,q) (\la n,m\ra) = \left\{ \begin{array}{ll} 1 & \text{if $p(m)=n$;}\\
0 & \textrm{otherwise.}\\
\end{array} \right.
\]
When $(p,q)\in \dom(\Sep)$ it is immediate that $C_1 (h(p,q)) \in
\Sep(p,q)$. In fact
\[
C_1(h(p,q)) (n)= \left\{ \begin{array}{ll} 0 & \text{if $n \in \ran(p)$;}\\
1 & \textrm{otherwise.}\\
\end{array} \right.\qedhere
\]
\end{proof}

We intend to use computable reducibility to \Sep\ as a way of
assessing incomputability of other functions.

\begin{definition}[\Sep-computable and
\Sep-complete]\label{sepcompcompl} Let $(X,\s_X),(Y,\s_Y)$ be
represented spaces. Then a multi-valued (possibly single-valued)
function $f: \sbsq X\toto Y$ is \emph{\Sep-computable} if $f \cred
\Sep$, \emph{\Sep-complete} if $f \ceq \Sep$.
\end{definition}

To study \Sep\ we introduce the function \Path, which corresponds to
Weak K\"{o}nig's Lemma, i.e.\ the statement asserting the existence of
infinite paths in any infinite binary tree.

\begin{definition}[\Path]
Let $\InfTr \subseteq \mathcal{P} (\Seqd)$ be the set of all
infinite binary trees:
\[
\InfTr = \set{T \subseteq \Seqd}{\forall t \in T\, \forall s
\sqsubseteq t\, s \in T \land \forall n\, T \cap 2^n \neq
\emptyset}.
\]
Let $\Path: \mathcal{P} (\Seqd) \toto 2^\N$ with $\dom(\Path) =
\InfTr$ be the multi-valued function mapping each infinite binary
tree to the set of its infinite paths:
\[
\Path(T) = \set{q \in \Can}{\forall n\, q[n] \in T}.
\]
\end{definition}

The proof of the next theorem follows closely the proof of
\cite[Lemma IV.4.4]{sosoa}.

\begin{theorem}\label{Sep/Path}
$\Sep \ceq \Path$.
\end{theorem}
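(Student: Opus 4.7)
The plan is to establish the two reductions $\Sep \cred \Path$ and $\Path \cred \Sep$ separately, mirroring the classical proof that \SI01-separation is equivalent to Weak K\"onig's Lemma. In both cases the reducing functions $h$ and $k$ of Definition \ref{def:redmulti} will be single-valued and directly computable from the inputs; I take $\mathcal{P}(\Seqd)$ to be represented by characteristic functions in the natural way.

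For $\Sep \cred \Path$, given $(p,q) \in \dom(\Sep)$ I would set
\[
h(p,q) = \SET{s \in \Seqd}{\fa n, m < |s|\, (p(m) = n \To s(n) = 0) \land (q(m) = n \To s(n) = 1)}.
\]
Bounding $n, m$ by $|s|$ makes membership decidable from $(p,q)$, so the characteristic function of this tree is uniformly computable. Closure under prefixes is immediate. For infinity, the disjointness of $\ran(p)$ and $\ran(q)$ lets me define, for each $N$, an element of $h(p,q)$ of length $N$ by reading off the constraints at each coordinate $i < N$ without conflict. Any infinite path $r$ of $h(p,q)$ satisfies $r(p(n)) = 0$ and $r(q(n)) = 1$ for every $n$, so $r \in \Sep(p,q)$ and I can take $k((p,q),r) = r$.

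For $\Path \cred \Sep$, starting from $T \in \InfTr$, I would define
\begin{align*}
A_T &= \SET{\s \in \Seqd}{\exs n > |\s|\, \bigl(\fa t \in T \cap 2^n\; \s \conc 0 \not\sqsubseteq t \,\land\, \exs t \in T \cap 2^n\; \s \sqsubseteq t\bigr)},\\
B_T &= \SET{\s \in \Seqd}{\exs n > |\s|\, \bigl(\fa t \in T \cap 2^n\; \s \conc 1 \not\sqsubseteq t \,\land\, \exs t \in T \cap 2^n\; \s \sqsubseteq t\bigr)}.
\end{align*}
Both sets are uniformly c.e.\ in $T$. Fixing a computable injection $c\colon \Seqd \to \N \setminus \{0,1\}$, I would let $h(T) = (p_T, q_T) \in \Bai \times \Bai$, where $p_T$ enumerates $c(A_T)$ (padding with $0$ at stages where no new element has appeared) and $q_T$ enumerates $c(B_T)$ (padding with $1$), so that $\ran(p_T) \cap \ran(q_T) = \eps$. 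Finally, I would take $k(T,r)$ to be the branch $\pi \in \Can$ defined inductively by $\pi(n) = 1 - r(c(\pi[n]))$, i.e.\ moving to $\pi[n] \conc 1$ when $r(c(\pi[n])) = 0$ and to $\pi[n] \conc 0$ otherwise.

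The main obstacle is verifying two points. First, that $A_T \cap B_T = \eps$: if $\s \in A_T$ with witness $n$, then at level $n$ some extension of $\s$ in $T$ survives and must pass through $\s \conc 1$, so $\s \conc 1$ has extensions of length $n$. If also $\s \in B_T$ with witness $n' \geq n$, then $\s \conc 1$ has died by stage $n'$ while $\s \conc 0$ has been dead since stage $n$, so $\s$ itself has no extension at level $n'$, contradicting the second clause of $B_T$; the case $n' < n$ is symmetric. Second, that $\pi$ really is an infinite path: by induction on $n$, $\pi[n]$ is extendible in $T$. For the inductive step, if $\pi[n] \conc 0$ is not extendible then $\pi[n] \in A_T$, forcing $r(c(\pi[n])) = 0$ and $\pi(n) = 1$, which is the extendible child; the symmetric case treats $B_T$; and if both children are extendible either choice made by $r$ keeps us on an extendible node. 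Combining the two reductions yields $\Sep \ceq \Path$.
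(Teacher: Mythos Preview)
Your proposal is correct and follows essentially the same approach as the paper. For $\Sep \cred \Path$ your tree $h(p,q)$ is exactly the paper's construction, and for $\Path \cred \Sep$ your sets $A_T$, $B_T$ are equivalent reformulations of the paper's predicates $\varphi_T(s,1)$, $\varphi_T(s,0)$ (since $\theta_T(n,\s)\land\neg\theta_T(n,\s*0)$ is equivalent to $\theta_T(n,\s*1)\land\neg\theta_T(n,\s*0)$ for a binary tree); the only cosmetic difference is that you flip the roles of $0$ and $1$, compensating with $\pi(n)=1-r(c(\pi[n]))$ where the paper uses $k(r)(m)=r(k(r)[m]+2)$ directly.
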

\begin{proof}
We start by showing that $\Sep \cred \Path$. Let $h: \Bai \times \Bai
\to \mathcal{P} (\Seqd)$ be defined by
\begin{multline*}
h(p,q) = \{\,t \in \Seqd | \\
\forall i<|t| [(\exists j<|t|\, p(j)=i \implies t(i) = 0) \land
(\exists j<|t|\, q(j)=i \implies t(i)=1)]\,\}.
\end{multline*}
The function $h$ is clearly computable.

If $(p,q) \in \dom(\Sep)$ it is easy to see that $h(p,q) \in \InfTr$
and any infinite path in $h(p,q)$ is the characteristic function of
a set separating $\ran(p)$ and $\ran(q)$. Thus $\Path(h(p,q))
\subseteq \Sep(p,q)$ for every $(p,q) \in \dom(\Sep)$, showing $\Sep
\cred \Path$.\smallskip

We now prove $\Path \cred \Sep$. Given any $T \in \mathcal{P}
(\Seqd)$ let, for $s \in \Seqd$ and $i<2$,
\begin{align*}
\theta_T (n,s) \iff & \exists t \in 2^n (t \in T \land s \sqsubseteq t);\\
\varphi_T (s,i) \iff & \exists n (\theta_T (n, s*i) \land \neg \theta_T (n, s*(1-i))).
\end{align*}
Notice that if $T \in \InfTr$ we have $\neg (\varphi_T (s,0) \land
\varphi_T (s,1))$ for all $s \in \Seqd$.

It is easy to define a computable function $h: \mathcal{P} (\Seqd)
\to \Bai \times \Bai$ such that $h(T) = (p_T,q_T)$ with
\begin{align*}
\ran(p_T) = & \set{s+2}{\varphi_T (s,0)} \cup \{0\} \quad \text{and}\\
\ran(q_T) = & \set{s+2}{\varphi_T (s,1)} \cup \{1\}.
\end{align*}
If $T \in \InfTr$ the observation above implies $\ran(p_T) \cap
\ran(q_T) = \emptyset$, i.e.\ $h(T) \in \dom(\Sep)$.

Given $r \in \Can$ we can recursively define $k(r) \in \Can$ by
\[
k(r)(m) = r(k(r)[m]+2).
\]
We have thus defined a computable function $k: \Can \to \Can$.

If $T \in \InfTr$ and $r \in \Sep(h(T))$ we show by induction on $m$
that $\theta_T (n,k(r)[m])$ holds for all $m$ and $n \geq m$. To
simplify the notation, let $s_m = k(r)[m]$.
\begin{itemize}
\item we have $s_0 = \l$ (recall that $\l$ is the empty
    sequence), and $\theta_T (n,\l)$ for all $n \geq 0$ follows
    immediately from the fact that $T$ is an infinite tree;
\item now suppose that $\theta_T (n,s_m)$ holds for all $n \geq
    m$. We want to show that $\theta_T (n,s_{m+1})$ holds for
    all $n>m$.

If $\varphi_T (s_m,0)$ then $s_m +2 \in \ran(p_T)$ and from $r
\in \Sep(h(T))$ it follows that $r(s_m +2) = 0$. Therefore
$s_{m+1} = s_m*0$. Let $N \in \N$ be such that $\theta_T
(N,s_m*0)$ and $\neg \theta_T (N,s_m*1)$. For $n \geq N$, it
cannot be $\theta_T (n,s_m*1)$ (because $T$ is a tree), but by
induction hypothesis $\theta_T (n,s_m)$ holds. Hence we have
$\theta_T (n,s_m*0)$, that is $\theta_T (n,s_{m+1})$. When $m
\leq n < N$, $\theta_T (n,s_{m+1})$ follows from $\theta_T
(N,s_{m+1})$ and $T$ being a tree.

When $\varphi_T (s_m,1)$, the argument is similar to the previous
case.

If $\varphi_T (s_m,0)$ and $\varphi_T (s_m,1)$ both fail, then
for every $n>m$ either
\begin{gather*}
\neg \theta_T (n,s_m*0) \land \neg \theta_T (n,s_m*1)\\
\text{or} \quad \theta_T (n,s_m*0) \land \theta_T (n,s_m*1).
\end{gather*}
The first case is impossible, since $\theta_T (n,s_m)$ for all
$n \geq m$. Therefore only the second case is possible, which
means that no matter what is $s_{m+1}$ (i.e., whatever is the
value of $r(s_m +2)$) we have $\theta_T (n,s_{m+1})$ for all
$n>m$.
\end{itemize}

In particular for all $n$ we have $\theta_T (n,k(r)[n])$ and thus
$k(r)[n] \in T$. Hence $k(r) \in \Path(T)$. We have thus shown that
$k(\Sep(h(T))) \subseteq \Path (T)$, which shows that $\Path \cred
\Sep$.
\end{proof}

We will need to consider also paths in bounded trees. These are the
finitely branching trees for which there is an explicit bound,
depending on the level, for the values attained by the sequences
occurring in the tree.

\begin{definition}[\PathB]
Let $\InfTrB \subseteq \mathcal{P} (\Seq) \times \Bai$ be the set of
all infinite \lq\lq bounded trees\rq\rq. $(T,b) \in \mathcal{P}
(\Seq) \times \Bai$ belongs to \InfTrB\ if and only if
\[
\forall t \in T\, \forall s \sqsubseteq t\, s \in T \land \forall i\,
\forall t \in T \cap \N^{i+1}\, t(i)<b(i) \land \forall n\, T \cap \N^n
\neq \emptyset.
\]
Let $\PathB: \mathcal{P} (\Seq) \times \Bai \toto 2^\N$ with
$\dom(\PathB) = \InfTrB$ be the multi-valued function mapping each
infinite bounded tree to the set of its infinite paths:
\[
\PathB(T,b) = \set{p \in \Bai}{\forall n\, p[n] \in T}.
\]
\end{definition}

The following result is the computable analysis equivalent of Lemma
IV.1.4 in \cite{sosoa}. We omit the proof, which is a
straightforward adaptation of the proof in the reverse mathematics
setting.

\begin{lemma}\label{boundedKL}
$\PathB \ceq \Path$ and hence $\PathB \ceq \Sep$.
\end{lemma}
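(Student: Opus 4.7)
The plan is to prove $\PathB \ceq \Path$ directly; the second assertion $\PathB \ceq \Sep$ then follows from Theorem \ref{Sep/Path} and transitivity of $\ceq$ (Lemma \ref{trans-cred}). The direction $\Path \cred \PathB$ is straightforward: a binary tree is a bounded tree with bound $\bar 2$ (the constant sequence with value $2$), so one takes $h(T) = (T,\bar 2)$ and lets $k$ be projection on the second coordinate. One immediately checks that $h(T) \in \InfTrB$ whenever $T \in \InfTr$ and $\PathB(h(T)) = \Path(T)$.

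For the harder direction $\PathB \cred \Path$, the plan is to recode each $(T,b)$ as a binary tree $T'$ from whose infinite paths one can read off infinite paths in $T$. (Note that the definition of $\InfTrB$ forces $b(i)\geq 1$ for all $i$, since otherwise $T \cap \N^{i+1}$ would be empty, contradicting $T \cap \N^n \neq \emptyset$ for all $n$.) I fix the block encoding $\sigma_i(j) = 0^j \conc \la 1 \ra \conc 0^{b(i)-j-1}$ of length $b(i)$ for $j<b(i)$, and extend it to $\sigma(t) = \sigma_0(t(0)) \conc \dots \conc \sigma_{n-1}(t(n-1))$ of length $B_n = \sum_{i<n} b(i)$ for $t \in T \cap \N^n$. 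The reducing function is
\[
h'(T,b) = T' = \set{s \in \Seqd}{\exists t \in T\, s \sqsubseteq \sigma(t)},
\]
which is computable because given a prospective $s$ one searches for a $t \in T$ whose code extends $s$, and the search terminates since $T$ is infinite. Infinitude of $T'$ follows from infinitude of $T$ together with $b(i) \geq 1$. The recovery function $k'$ reads off blocks: for $r \in \Path(T')$ and each $n$, the segment of $r$ on positions $[B_n, B_{n+1})$ will be some $\sigma_n(j_n)$ with $j_n<b(n)$, and I set $k'(r)(n) = j_n$.

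The main obstacle is justifying the block decomposition in the recovery step: one must show that for every $r \in \Path(T')$ and every $n$, the prefix $r[B_n]$ coincides with $\sigma(t_n)$ for some $t_n \in T \cap \N^n$, so that the following block is automatically a legitimate code $\sigma_n(j_n)$. By definition of $T'$ there exists $t \in T$ with $r[B_n] \sqsubseteq \sigma(t)$; then $|t| \geq n$ (otherwise $|\sigma(t)| = B_{|t|} < B_n$), and hence $r[B_n] = \sigma(t \restriction n)$ because $\sigma$ is built block by block. Setting $t_n = t \restriction n$ and using that $T$ is closed under initial segments supplies the required witness, and the sequence $(j_n)_{n \in \N}$ obtained in this way is a path in $T$. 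The verification that $k'(\Path(h'(T,b))) \subseteq \PathB(T,b)$ then falls out, completing the reduction.
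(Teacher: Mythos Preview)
Your argument is correct and is exactly the standard block-encoding adaptation of Simpson's Lemma~IV.1.4 that the paper defers to (the paper omits the proof entirely). One small fix: the computability of $h'$ does not hinge on $T$ being infinite; rather, membership of $s$ in $T'$ is decidable because one need only test the finitely many $t \in \prod_{i<n}\{0,\dots,b(i)-1\}$ (for the least $n$ with $B_n \geq |s|$) against the characteristic function of $T$, and infinitude of $T$ is used only to ensure $T' \in \InfTr$.
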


We now show the incomparability of \Sep\ and $\Omega$. We already
know from Lemma \ref{Sep:incomp} that $\Sep \ncred \Omega$.

\begin{theorem}\label{OmegaPath}
$\Omega \ncred \Sep$.
\end{theorem}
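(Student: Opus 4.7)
The plan is a proof by contradiction combining the compactness of \Can\ with the continuity of the putative reduction. Suppose $\Omega\cred\Sep$. By Lemma \ref{equivred} together with Lemma \ref{comp->cont}, fix continuous $H\colon\sbsq\Bai\to\Bai$ and $K\colon\sbsq\Bai\times\Bai\to\Bai$ such that, for every realizer $G$ of \Sep, the map $p\mapsto K(p,G(H(p)))$ is a realizer of $\Omega$. Let $(p_0,q_0)\in\dom(\Sep)$ be the pair named by $H(\bar 0)$, and set $p_n:=0^n\conc 1\conc\bar 0$, so $p_n\to\bar 0$ in \Bai, with $\Omega(p_n)=1$ for every $n$ while $\Omega(\bar 0)=0$.

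By continuity of $H$, the pairs $(p_n^*,q_n^*)$ named by $H(p_n)$ converge to $(p_0,q_0)$ in $\Bai\times\Bai$. For each $n$ pick some $r_n\in\Sep(p_n^*,q_n^*)\sbsq\Can$ arbitrarily; compactness of \Can\ furnishes a subsequence $r_{n_k}\to r^*\in\Can$, and a direct check using the discreteness of \N\ (so $p_{n_k}^*(i)=p_0(i)$, $q_{n_k}^*(i)=q_0(i)$, and $r_{n_k}(m)=r^*(m)$ eventually, for every fixed $i$ and $m$) yields $r^*\in\Sep(p_0,q_0)$: this is the closed-graph property of \Sep. I would then hand-craft a realizer $G$ of \Sep\ whose values at $H(p_{n_k})$ and at $H(\bar 0)$ are, respectively, the natural names of $r_{n_k}$ and $r^*$ (resetting $r_{n_k}:=r^*$ should it happen that $H(p_{n_k})=H(\bar 0)$, to avoid any conflict), and which takes arbitrary valid values elsewhere. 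By the standing hypothesis, $K(p_{n_k},G(H(p_{n_k})))$ is then a name of $\Omega(p_{n_k})=1$ for every $k$, while $K(\bar 0,G(H(\bar 0)))$ is a name of $\Omega(\bar 0)=0$.

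The contradiction now falls out of the continuity of $K$: under an admissible representation of \Can\ (for instance the Cauchy representation coming from its computable metric structure), the convergence $r_{n_k}\to r^*$ in \Can\ lifts to convergence of the chosen names in \Bai, whence $(p_{n_k},G(H(p_{n_k})))\to(\bar 0,G(H(\bar 0)))$ and hence $K(p_{n_k},G(H(p_{n_k})))\to K(\bar 0,G(H(\bar 0)))$ in \Bai. But the sets of names of $0$ and of names of $1$ are disjoint open subsets of \Bai---they are the preimages, under an admissible representation, of the open singletons of the discrete space $\{0,1\}$---so a sequence of names of $1$ cannot converge to a name of $0$. The main technical point requiring care is the lifting of convergence from \Can\ up to the level of names in \Bai\ when prescribing $G(H(p_{n_k}))$ and $G(H(\bar 0))$; once that representation-theoretic detail is pinned down, the continuity and compactness arguments slot together cleanly to deliver the contradiction.
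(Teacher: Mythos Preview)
Your argument is essentially the paper's: assume a reduction, feed in the sequence $p_n=0^n1\bar 0\to\bar 0$, use compactness of $\Can$ to extract a convergent subsequence of solutions, verify the limit is again a solution (the closed-graph step), and derive a contradiction from continuity of the second reduction map. The paper first passes to $\Path$ via Theorem~\ref{Sep/Path} and then runs the same scheme with trees and paths instead of separating sets, but that is cosmetic.

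One organizational difference is worth noting. The paper works with the witnesses $h,k$ of Definition~\ref{def:redmulti} directly: for \emph{any} $T_n\in h(p_n)$ and \emph{any} $q_n\in\Path(T_n)$ one has $k(p_n,q_n)=\{1\}$, so no realizer of $\Path$ is ever built. You instead use Lemma~\ref{equivred} and hand-craft a single realizer $G$ of $\Sep$. That forces you to worry about collisions among the values $H(p_{n_k})$; you handle $H(p_{n_k})=H(\bar 0)$ but not $H(p_{n_k})=H(p_{n_j})$ for $k\ne j$. This is easy to repair (thin the subsequence so the $H(p_{n_k})$ are pairwise distinct, or observe that any infinite coincidence set forces the common value to be $H(\bar 0)$ by continuity), but it is cleaner to avoid it altogether: since the hypothesis of Lemma~\ref{equivred}(ii) quantifies over \emph{all} realizers $G$, you may use a different $G_k$ for each $k$, so that $K(p_{n_k},u_{n_k})$ names $1$ and $K(\bar 0,u^*)$ names $0$ without any well-definedness constraint on a single $G$. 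With that simplification your proof and the paper's coincide.
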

\begin{proof}
By Theorem \ref{Sep/Path} it suffices to show that $\Omega \ncred
\Path$.

Suppose that $\Omega \cred \Path$ and let $h: \Bai \toto \InfTr$ and
$k: \Bai \times \Can \toto \{0,1\}$ be computable multi-valued
functions witnessing this. In other words, $k(p,q) = \Omega(p)$ for
every $q$ such that $q \in \Path(T)$ for some $T \in h(p)$ (on such
pairs $k$ is single-valued).

For $n \in \N$ let $p_n = (\bar{0}[n] * 1) \conc \bar{0} \in \Bai$,
where $\bar{0}$ is the only argument on which $\Omega$ takes value
$0$. Clearly $\lim_n p_n = \bar{0}$, and, since $h$ has a
computable, and hence continuous, realizer, there exist $T \in
h(\bar{0})$ and a sequence of infinite trees $(T_n)$ with $T_n \in
h(p_n)$ such that $\lim T_n = T$.

For any $n \in \N$ let $q_n \in \Path(T_n)$, so that $k(p_n,q_n) =
\Omega(p_n) =1$. Since \Can\ is compact we may assume that $\lim_n
q_n = q$ for some $q \in \Can$. For every $m$, if $n$ is
sufficiently large, $q[m] = q_n[m]$ and $T_n \cap 2^m = T \cap 2^m$,
and hence $q[m] \in T$. Thus $q \in \Path (T)$.

Again, $k$ has a continuous realizer and we should have $\lim_n k
(p_n,q_n) = k(\bar{0},q) =\Omega(\bar{0}) = 0$ which is impossible
since $k(p_n,q_n) = 1$ for all $n$.
\end{proof}

\begin{cor}\label{c1path}
$\Sep \creds C_1$.
\end{cor}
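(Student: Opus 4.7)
The plan is to combine Lemma \ref{PC} with the non-reducibility result of Theorem \ref{OmegaPath}. By Lemma \ref{PC} we already have $\Sep \cred C_1$, so it only remains to show $C_1 \ncred \Sep$, since together these give $\Sep \creds C_1$.

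To establish $C_1 \ncred \Sep$, I would argue by contradiction via transitivity. The key observation is that $\Omega \cred C_1$: given $p \in \Bai$, define the computable function $h : \Bai \to \Bai$ by setting $h(p)(\la 0, m \ra) = p(m)$ and $h(p)(\la n, m \ra) = 0$ for $n > 0$. Then
\[
C_1(h(p))(0) = 0 \iff \exists m\, p(m) \neq 0 \iff \Omega(p) = 1,
\]
so the map $q \mapsto 1 - q(0)$ together with $h$ witnesses $\Omega \cred C_1$. (Alternatively one could cite Lemma \ref{Ran}, since $\Omega \cred \Ran$ is immediate and $\Ran \ceq C_1$.)

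Now suppose for contradiction that $C_1 \cred \Sep$. Applying Lemma \ref{trans-cred} to the chain $\Omega \cred C_1 \cred \Sep$ yields $\Omega \cred \Sep$, which directly contradicts Theorem \ref{OmegaPath}. Hence $C_1 \ncred \Sep$, and combined with Lemma \ref{PC} this gives $\Sep \creds C_1$, as required.

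The proof is essentially a two-line derivation from results already established, so no step poses a real obstacle; the only point worth double-checking is that the trivial reduction $\Omega \cred C_1$ really goes through with a computable $h$ and a computable post-processing $k$, which is clear from the definitions.
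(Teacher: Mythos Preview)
Your proof is correct and follows exactly the same route as the paper's: combine $\Sep \cred C_1$ from Lemma~\ref{PC} with $\Omega \cred C_1$ and Theorem~\ref{OmegaPath} via transitivity to rule out $C_1 \cred \Sep$. The only difference is that you spell out the easy reduction $\Omega \cred C_1$ explicitly, whereas the paper simply asserts it.
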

\begin{proof}
Straightforward from Lemma \ref{PC} and the previous theorem, since
$\Omega \cred C_1$.
\end{proof}

\subsection{Iterating \Sep-computable functions}
We now show that iterating \Sep-computable functions does not
increase the degree of incomputability. Thus the situation is quite
different from the case of the $C_i$'s, where $C_i \ceq
\underbrace{C_1 \circ C_1 \circ \dots \circ C_1}_{i \text{ times}}$
and hence (recalling that $C_i \creds C_j$ when $i<j$) $C_i \creds
C_i \circ C_i$ for every $i>0$.

First, we deal with \Path. Actually, since the application of \Path\
to itself is meaningless, we use a computable function to transform
the output of \Path\ into an infinite tree that is given as input to
another application of \Path.

\begin{lemma}\label{doublappl}
Let $f: \sbsq \Can \to \InfTr$ be computable. Then
\[
\Path \circ f \circ \Path \cred \Path.
\]
\end{lemma}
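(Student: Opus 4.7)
The plan is to fold the two applications of $\Path$ into a single tree $T^* = h(T)$ whose infinite paths jointly encode a path $q$ in $T$ together with a path $r$ in $f(q)$; the extraction map will be $k(T,w)(i) := w(2i+1)$, picking out the second component. Pairs of binary strings of equal length are encoded as single binary strings via the analogue of the interleaving $\oplus$ defined on $\Bai$. Fix a computable realizer $F$ for $f$ with respect to the natural characteristic-function representation of $\InfTr \sbsq \mathcal{P}(\Seqd) \cong \Can$, so that $F(q)$ is a name of $\chi_{f(q)}$. For each $\sigma \in \Seqd$, let $F^n(\sigma) \in \Seqd$ denote the string written on the output tape of a fixed TTE machine computing $F$ after $n$ steps on an input tape beginning with $\sigma$; by one-wayness of the input and output tapes, $F^n(\sigma)$ is a prefix of $F(q)$ for every $q \in \Can$ extending $\sigma$, and these approximations grow monotonically in both $n$ and $\sigma$.

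I declare $\sigma \oplus \tau \in T^*$ exactly when $\sigma \in T$, $|\sigma| = |\tau|$, and for every $\tau' \sqsubseteq \tau$ the bit of $F^{|\sigma|}(\sigma)$ indexed by $\tau'$ is either not yet written or equals $1$; equivalently, stage $|\sigma|$ of the computation of $F$ on $\sigma$ has not yet refuted $\tau' \in f(q)$ for any $q$ extending $\sigma$. This condition is decidable in $T$, so $h$ is computable, and monotonicity of the approximations ensures $T^*$ is closed under initial segments. To see $T^* \in \InfTr$, pick any $q \in \Path(T)$ and $r \in \Path(f(q))$ (both exist because $T \in \dom(\Path \circ f \circ \Path)$); every $(q[n], r[n])$ lies in $T^*$, because each $r[m]$ with $m \leq n$ genuinely belongs to $f(q)$, so the finite approximation $F^n(q[n])$ cannot certify the contrary.

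The main step, and the key delicate point, is to verify that every $w \in \Path(T^*)$ decodes as $w = q \oplus r$ with $q \in \Path(T)$ and $r \in \Path(f(q))$. The first clause is immediate from $q[n] \in T$ for all $n$. For the second, suppose towards a contradiction that $r[m] \notin f(q)$ for some $m$. Since $F(q) = \chi_{f(q)}$, the TTE computation of $F$ on input $q$ eventually writes a $0$ at the output position indexed by $r[m]$; by one-wayness this happens after reading only finitely many input bits and running for finitely many steps, so for sufficiently large $n$ this $0$-bit already appears in $F^n(q[n])$, forcing $(q[n], r[n]) \notin T^*$ and contradicting $w \in \Path(T^*)$. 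This step, relying essentially on TTE computability to ensure the finite approximations faithfully track $F(q)$ in the limit, is the core of the argument. Combining everything, $k(T, \Path(h(T))) \sbsq (\Path \circ f \circ \Path)(T)$, so $h$ and $k$ witness $\Path \circ f \circ \Path \cred \Path$.
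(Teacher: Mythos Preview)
Your proof is correct and follows essentially the same route as the paper's: both interleave a candidate path in $T$ with a candidate path in its $f$-image into a single binary tree, using finite-stage approximations of the machine computing $f$ (your $F^{|\sigma|}(\sigma)$ is exactly the data the paper packages as $\widehat{f}(t_0)$) to decide membership, and then project onto the second coordinate. One small point: as written, $\sigma \oplus \tau$ with $|\sigma|=|\tau|$ only specifies the even-length members of $T^*$, so $T^*$ is not literally closed under prefixes; this is trivially repaired by also allowing $|\sigma|=|\tau|+1$ (the paper's decomposition $t\mapsto(t_0,t_1)$ handles both parities uniformly) or by closing downward.
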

\begin{proof}
For any $p \in \Bai$ let $p_0, p_1\in \Bai$ be such that $p = p_0
\oplus p_1$. Define analogously $t_0$ and $t_1$ when $t \in \Seqd$.
The maps $p \mapsto p_0$ and $p \mapsto p_1$ are obviously
computable.

Given any $T \in \InfTr$ we will computably define $\widetilde{T}
\in \InfTr$ such that if $T \in \dom(\Path \circ f \circ \Path)$ and
$p \in \Path(\widetilde{T})$ we have $p_0 \in \Path(T)$ and $p_1 \in
(\Path \circ f) (p_0)$. This suffices to prove $\Path \circ f \circ
\Path \cred \Path$ (in the notation of Definition
\ref{def:redmulti}, $T \mapsto \widetilde{T}$ and $(T,p) \mapsto
p_1$ play the role of $h$ and $k$ respectively).

Let $\widehat{f}: \Seqd \to \mathcal{P} (\Seqd)$ be the computable
function defined as follows. $\widehat{f}(t)$ is the set of all $s
\in \Seqd$ such that after $|t|$ steps (when at most the first $|t|$
bits of input have been used) in the computation of $f (t \conc q)$
(for any $q \in \Can$) no $v \sqsubseteq s$ has been marked as not
belonging to the output tree.

Notice that $\widehat{f}(t)$ is a tree and $t \sqsubseteq u$ implies
$\widehat{f}(t) \supseteq \widehat{f}(u)$. Moreover for all $p \in
\dom(f)$ and $n \in \N$, $f(p) \subseteq \widehat{f}(p[n])$. Since we
have that if $s \notin f(p)$ then $s \notin \widehat{f}(p[n])$ for
some $n$, $f(p) = \bigcap_n \widehat{f}(p[n])$ for every $p \in
\dom(f)$. Thus we can view $\widehat{f}$ as an approximation of $f$
from above.

Let
\[
\widetilde{T} = \set{t \in \Seqd}{t_0 \in T \land t_1 \in \widehat{f}(t_0)},
\]
so that the map $T \mapsto \widetilde{T}$ is computable. Using the
properties of $\widehat{f}$ mentioned above, it is immediate to
check that $\widetilde{T}$ is a tree. Moreover if $T \in \dom(\Path
\circ f \circ \Path) = \set{T \in \InfTr}{\Path(T) \subseteq
\dom(f)}$ then $\widetilde{T} \in \InfTr$. In fact if $q \in
\Path(T)$ then $f(q) \in \InfTr$ and if $u \in f(q)$ has length $n$
then the sequence $t \in 2^{2n}$ such that $t_0 = q[n]$ and $t_1 =
u$ belongs to $\widetilde{T}$.

If $p \in \Path(\widetilde{T})$ then $p_0 \in \Path(T)$ is immediate.
If $p_0 \in \dom(f)$ and $p_1 \notin \Path(f(p_0))$ then there exist
$m$ such that $p_1[m] \notin f(p_0)$ and hence $n$ such that $p_1[m]
\notin \widehat{f}(p_0[n])$. We may assume $m \leq n$, which implies
$p_1[n] \notin \widehat{f}(p_0[n])$, contradicting $p[2n] \in
\widetilde{T}$. Thus $p_1 \in \Path(f(p_0))$.
\end{proof}

In a similar way, one can prove the following Lemma.

\begin{lemma}\label{ugly}
Let $X$ and $Y$ be represented spaces and suppose that $h: \sbsq X
\toto \InfTr$, $i: \sbsq X \times \Can \toto \InfTr$ and $j: \sbsq X
\times \Can \toto Y$ are computable. Let $\ell: \sbsq X \toto Y$ be
defined by
\[
\ell(x) = \bigcup \set{j(x,q)}{\exists p \in (\Path \circ h) (x)\, q \in (\Path \circ i) (x,p)}.
\]
Then $\ell \cred \Path$.
\end{lemma}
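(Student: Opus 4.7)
The strategy closely parallels that of Lemma~\ref{doublappl}. The plan is to produce computable multi-valued functions $H : \sbsq X \toto \mathcal{P}(\Seqd)$ and $K : \sbsq X \times \Can \toto Y$ such that for every $x \in \dom(\ell)$ one has $H(x) \subseteq \InfTr$ and $K(x, r) \subseteq \ell(x)$ for every $r \in \Path(H(x))$; by Definition~\ref{def:redmulti} these witness $\ell \cred \Path$.

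I design $H(x)$ so that each infinite path $r \in \Path(H(x))$, written $r = r_0 \oplus r_1$ with $r_0, r_1 \in \Can$, satisfies $r_0 \in \Path(h(x))$ and $r_1 \in \Path(i(x, r_0))$. Given such an $r$, I then set $K(x, r) = j(x, r_1)$, which is computable because $j$ is, and whose output lies in $\ell(x)$ directly from the definition of $\ell$.

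To construct $H(x)$ I use staged upper approximations of $h$ and $i$, in the spirit of the function $\widehat{f}$ from Lemma~\ref{doublappl}. For each $n$, using only a finite prefix of a $\s_X$-name of $x$ and a finite prefix $t_0 \in \Seqd$, the realizers of $h$ and of $i$ can mark finitely many sequences as excluded from the respective output trees; let $\widehat{h}_n(x) \subseteq \Seqd$ and $\widehat{i}_n(x, t_0) \subseteq \Seqd$ be the sets of sequences none of whose initial segments have been marked after $n$ stages. These are trees, are monotone decreasing in $n$, depend computably on their inputs, and (analogously to the limit identities in Lemma~\ref{doublappl}) satisfy $h(x) = \bigcap_n \widehat{h}_n(x)$ as well as $i(x, p_0) = \bigcap_n \widehat{i}_n(x, p_0[n])$ whenever these are defined. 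Then set
\[
\widetilde{T}_x = \set{t \in \Seqd}{t_0 \in \widehat{h}_{|t|}(x) \text{ and } t_1 \in \widehat{i}_{|t|}(x, t_0)}
\]
and let $H(x) = \widetilde{T}_x$. Monotonicity of the approximations gives that $\widetilde{T}_x$ is closed under initial segments, hence a tree. When $x \in \dom(\ell)$, for any $p_0 \in \Path(h(x))$ and any $p_1 \in \Path(i(x, p_0))$ (both exist by hypothesis on $\ell$), the sequence $p_0 \oplus p_1$ is an infinite path of $\widetilde{T}_x$, so $\widetilde{T}_x \in \InfTr$. Conversely, any $r \in \Path(\widetilde{T}_x)$ gives, by intersecting the approximations level by level as in Lemma~\ref{doublappl}, $r_0 \in \Path(h(x))$ and $r_1 \in \Path(i(x, r_0))$, as required.

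The main technical point is the uniform computability of $(x, t_0) \mapsto \widehat{i}_n(x, t_0)$, i.e.\ the fact that the staged approximation depends only on $t_0$ and on boundedly many bits of an $\s_X$-name of $x$; this is exactly the role played by $\widehat{f}$ in Lemma~\ref{doublappl}, so rather than repeat the construction in full I would simply invoke that earlier argument, now applied in two parallel coordinates. Everything else is either bookkeeping in the definition of the interleaved tree $\widetilde{T}_x$ or a direct transfer of the verification already carried out for Lemma~\ref{doublappl}.
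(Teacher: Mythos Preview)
Your proposal is correct and is precisely the approach the paper intends: the paper gives no separate proof of Lemma~\ref{ugly}, merely stating that it is proved ``in a similar way'' to Lemma~\ref{doublappl}, and your argument is exactly that adaptation, interleaving staged upper approximations of $h$ and $i$ to build a single binary tree whose paths split as $r_0 \oplus r_1$ with $r_0 \in (\Path \circ h)(x)$ and $r_1 \in (\Path \circ i)(x,r_0)$. The only places to be careful are that $h$, $i$, $j$ are multi-valued (so the approximations and the resulting tree really depend on a fixed $\s_X$-name of $x$ via the chosen realizers, which is fine since $H$ and $K$ may be multi-valued), and that the stage parameter in $\widehat{i}$ should be tied to $|t_0|$ as in Lemma~\ref{doublappl} to guarantee the monotonicity needed for $\widetilde{T}_x$ to be closed under initial segments.
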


\begin{theorem}\label{compositionSep-comp}
Let $f: \sbsq X \toto Y$ and $g: \sbsq Y \toto Z$ be \Sep-computable
multi-valued functions between represented spaces. Then $g \circ f:
\sbsq X \toto Z$ is \Sep-computable.
\end{theorem}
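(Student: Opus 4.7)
The plan is to reduce from \Sep\ to \Path\ via Theorem \ref{Sep/Path} and then collapse the two resulting instances of \Path\ into a single one via Lemma \ref{ugly}. From the hypotheses and transitivity of $\cred$, I obtain computable multi-valued functions $h_f: \sbsq X \toto \InfTr$, $k_f: \sbsq X \times \Can \toto Y$ witnessing $f \cred \Path$, and $h_g: \sbsq Y \toto \InfTr$, $k_g: \sbsq Y \times \Can \toto Z$ witnessing $g \cred \Path$; it suffices to prove $g \circ f \cred \Path$.

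I intend to follow the natural two-step strategy: first apply \Path\ to $h_f(x)$ to get a path $p$, set $y \in k_f(x,p) \subseteq f(x) \subseteq \dom(g)$, then apply \Path\ to $h_g(y)$ to get $q'$, and finally return an element of $k_g(y, q')$. To package this as a single \Path-reduction via Lemma \ref{ugly} I take $h = h_f$. The main obstacle is that Lemma \ref{ugly} allows the output map $j(x,q)$ to see only $x$ and the second path $q$, whereas $y$ depends on the first path $p$. I bypass this by encoding $p$ inside every infinite path of the second tree: imitating the product-tree construction from the proof of Lemma \ref{doublappl}, I define a computable multi-valued function $i: \sbsq X \times \Can \toto \InfTr$ so that $i(x,p)$ contains the tree
\[
S_{x,p,T} = \set{t \in \Seqd}{t_0 \sqsubseteq p \land t_1 \in T}
\]
for $T \in h_g(k_f(x,p))$, where $t_0$ and $t_1$ denote the even-indexed and odd-indexed subsequences of $t$. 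Then $S_{x,p,T}$ is readily checked to lie in \InfTr, and its infinite paths are precisely the sequences $p \oplus q'$ with $q' \in \Path(T)$, so defining $j(x,q) = k_g(k_f(x, q_0), q_1)$ (computable by composition) gives access to both $p$ and $q'$ through $q$ alone.

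Applying Lemma \ref{ugly} to $h$, $i$, $j$ yields a multi-valued function $\ell \cred \Path$. To finish, I must check that $\emptyset \neq \ell(x) \subseteq (g \circ f)(x)$ for every $x \in \dom(g \circ f)$, so as to conclude $g \circ f \cred \ell$ by Lemma \ref{superset} and then $g \circ f \cred \Sep$ by transitivity and Theorem \ref{Sep/Path}. I plan to verify this at the level of realizers (Lemma \ref{equivred}), where all maps are deterministic single-valued functions: the element $y$ chosen by the realizer of $k_f$ when forming $i(x,p)$ is the same one recomputed by $j$ on $q = p \oplus q' \in \Path(i(x,p))$, so $j(x,q) \in k_g(y, q') \subseteq g(y) \subseteq (g \circ f)(x)$; nonemptiness of $\Path(i(x,p))$ follows because $y \in \dom(g)$ makes $h_g(y) \neq \emptyset$, and hence $S_{x,p,T}$ is an infinite binary tree. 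Once the encoding is in place, this verification is routine; the real content is the signature-matching trick used to fit the problem into the shape of Lemma \ref{ugly}.
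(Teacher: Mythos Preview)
Your approach matches the paper's: reduce to \Path, invoke Lemma \ref{ugly}, and finish with Lemma \ref{superset}. The only difference lies in how the map $j$ required by Lemma \ref{ugly} is supplied. The paper keeps $i(x,p)=(h_g\circ k_f)(x,p)$ and sets
\[
j(x,q)=\set{k_g(k_f(x,p),q)}{p\in(\Path\circ h_f)(x)\ \land\ q\in(\Path\circ h_g\circ k_f)(x,p)},
\]
collecting outputs over all first paths $p$ compatible with $q$; you instead encode $p$ into every path of the second tree, so that $j(x,q)=k_g(k_f(x,q_0),q_1)$ is a plain composition whose computability is immediate. This is a cleaner packaging: the paper's $j$ is specified through a side condition involving \Path, and its computability is not transparent.

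Both versions share the subtlety you correctly flag. Since $k_f$ is multi-valued, the element $y\in k_f(x,p)$ used when a realizer of $i$ builds the second tree need not coincide with the element of $k_f(x,q_0)$ that a realizer of $j$ selects afterwards, so the \emph{set-level} inclusion $\ell(x)\subseteq(g\circ f)(x)$ demanded by Lemma \ref{superset} does not follow from the multi-valued definitions alone (different names of $x$ may be fed to $i$ and to $j$ in the formula defining $\ell$). Your remedy via Lemma \ref{equivred} is the right one, but be aware that carrying it out amounts to inlining the product-tree construction behind Lemma \ref{ugly} at the realizer level---fixing one name of $x$ and one realizer $K_f$ throughout---rather than applying the lemma as a black box to multi-valued $i,j$ and then invoking Lemma \ref{superset}. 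The paper glosses over the same point.
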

\begin{proof}
By Theorem \ref{Sep/Path} we have $f,g \cred \Path$ and there exist
computable $h: \sbsq X \toto \InfTr$, $k: \sbsq X \times \Can \toto
Y$, $h': \sbsq Y \toto \InfTr$, $k': \sbsq Y \times \Can \toto Z$
witnessing this as in Definition \ref{def:redmulti}. Therefore
$k'(k(x,p),q) \subseteq (g \circ f)(x)$ for all $x \in \dom (g \circ
f)$, $p \in (\Path \circ h) (x)$, and $q \in (\Path \circ h' \circ
k) (x,p)$.

To use Lemma \ref{ugly} we need to identify the functions involved.
$h$ (in the notation of Lemma \ref{ugly}) is $h$, $i$ is $(x,p)
\mapsto (h' \circ k) (x,p)$ and $j$ is
\[
(x,q) \mapsto \set{k'(k(x,p),q)}{p \in (\Path \circ h) (x) \land q \in
(\Path \circ h' \circ k)(x,p)}.
\]
Then the function $\ell$ of Lemma \ref{ugly} is such that $\dom(g
\circ f) \subseteq \dom(\ell)$ and $\ell(x) \subseteq (g \circ f)
(x)$ for every $x \in \dom(g \circ f)$. Thus $g \circ f \cred \ell$
by Lemma \ref{superset} and $\ell \cred \Path$ by Lemma \ref{ugly}.
Hence $g \circ f \cred \Path$. Theorem \ref{Sep/Path} now implies
that $g \circ f$ is \Sep-computable.
\end{proof}

\section{Banach spaces in computable analysis}\label{section:BS}

\subsection{Effective Banach spaces}
To deal with Banach spaces in the context of computable analysis we
need to give definitions which are analogous to the ones given in
Section \ref{section:ca} for metric spaces.

\begin{definition}[Effective Banach space]
An \emph{effective Banach space} is a triple $(X, \|\phantom{x}\|,
e)$ such that
\begin{itemize}
 \item $X$ is a Banach space with norm $\|\phantom{x}\|$;
 \item $e: \N \to X$ is a \emph{fundamental sequence}, i.e.\ a
     sequence whose linear span is dense in $X$;
 \item $(X,d,a_e)$ is an effective metric space, where $d(x,y) =
     \|x-y\|$ and $a_e (s) = \sum_{i < |s|} a_\Q (s(i)) \cdot
     e(i)$ for $s \in \Seq$.
\end{itemize}
We will always assume that $X$ is nontrivial, i.e.\ that $\|e(i)\|
\neq 0$ for some $i \in \N$.
\end{definition}

Notice that an effective Banach space is separable.

The domain of the multi-valued function corresponding to the
Hahn-Banach Theorem consists of all effective Banach spaces. If this
is interpreted naively, we would need a method to code any possible
effective Banach space. Clearly, there are \lq\lq too many\rq\rq\
such spaces to allow a well defined single-valued representation
and, since the collection of all effective Banach spaces is not even
a set, even a multi-representation approach (in the sense of
\cite{GW}) is questionable.

We can overcome this problem by considering a set which contains all
effective Banach spaces up to isomorphism. For this set we can then
define a single-valued representation. (Notice that this approach is
very close to the one used in reverse mathematics, where it is
customary to represent mathematical objects by \lq\lq codes\rq\rq.)
We will adapt Weihrauch's notion of constructive metric completion
(see \cite{Weih00}) to the case of effective Banach spaces.

\subsection{Constructive Banach completions}\label{subs:cBc}
For every $s \in \Seq$ let
\[
c_s = \sum_{i<|s|} a_\Q(s(i)) \cdot i
\]
where we are viewing the right-hand side as a formal linear
combination of elements of $\N$ with scalars in $\Q$. Let $C =
\set{c_s}{s \in \Seq}$.

We define sum on $C$ and scalar multiplication of an element of $C$
by an element of $\Q$ in the obvious way. A \emph{noted
pseudo-normed space} is then a pair $N=(C, \|\phantom{x}\|)$ such
that $\|\phantom{x}\|: C \to \R$ is a pseudo-norm on $C$, i.e.
\begin{itemize}
    \item $\|c_s\|=0$ whenever $s(i) = 0$ for all $i < |s|$
        (recall that $a_\Q(0) =0$);
    \item $\|c_s+c_t\| \leq \|c_s\| + \|c_t\|$, for all $s,t\in
        \Seq$;
    \item $\|\a \cdot c_s\|=|\a| \cdot \|c_s\|$ for all $s \in
        \Seq$ and $\a \in \Q$.
\end{itemize}
Again, we assume that $\|c_s\| \neq 0$ for some $s \in \Seq$.

The pseudo-norm $\|\phantom{x}\|$ defines a pseudo-metric $d$ over
$C$ as usual by $d(c_s, c_t)=\|c_s - c_t\|$.

We now build the constructive Banach completion of $N$, as a
particular effective Banach space. Let $\widehat{C}$ be the set of
all Cauchy sequences of elements of $C$ which satisfy the usual
effective requirement:
\[
\widehat{C} = \set{(c_{s_i})}{\fa j\, \fa i<j\,
d(c_{s_i}, c_{s_j}) < 2^{-i}}.
\]
Define an equivalence relation $\sim$ on $\widehat{C}$ by
\[
(c_{s_i}) \sim (c_{t_i}) \Otto \lim d(c_{s_i}, c_{t_i}) =0,
\]
and notice that this condition is equivalent to $\fa i\, d(c_{s_i},
c_{t_i}) \leq 2^{-(i-1)}$. We denote by $[c_{s_i}]_{i \in \N}$ the
$\sim$-equivalence class of $(c_{s_i})$. We introduce then the
linear operations on $\widehat{C}/{\sim}$ by
\begin{gather*}
[c_{s_i}]_{i \in \N} + [c_{t_i}]_{i \in \N} = [c_{s_{i+1}} +
c_{t_{i+1}}]_{i \in \N};\\
a \cdot [c_{s_i}]_{i \in \N} = [a_\Q (n_{k+i}) \cdot c_{s_{k+i}}]_{i
\in \N}
\end{gather*}
where $a \in \R$, $(a_\Q(n_i))$ is a Cauchy sequence effectively
converging to $a$, and $k$ is such that $|a_\Q(n_0)| + \|c_{s_0}\|
+2 < 2^k$. We leave to the reader checking that these definitions
are meaningful and make $\widehat{C}/{\sim}$ a vector space (some of
the details are spelled out in \cite[p.75]{sosoa}).

We further define
\[
\|[c_{s_i}]_{i \in \N}\|_{\widehat{C}/{\sim}} = \lim \|c_{s_i}\|
\]
and one can check that $\widehat{C}/{\sim}$ is a Banach space.
Notice that $d_{\widehat{C}/{\sim}} ([c_{s_i}]_{i \in \N},
[c_{t_i}]_{i \in \N}) = \lim d(c_{s_i}, c_{t_i})$.

Define $e: \N \to \widehat{C}/{\sim}$ by $e(n) = [c_{\bar{0}[n]
*1}]_{i \in \N}$ (recall that $a_\Q(0) =0$ and $a_\Q(1) =1$) where
on the right hand side we have a constant sequence. The triple
$(\widehat{C}/{\sim}, \|\phantom{x}\|_{\widehat{C}/{\sim}}, e)$ is
an effective Banach space, the \emph{constructive Banach completion}
of the noted pseudo-normed space $N$.

The function $c_s \mapsto [c_s]_{i \in \N}$ maps $C$ into
$\widehat{C}/{\sim}$ respecting the vector operations and the
(pseudo-)norm. Therefore we can view $C$ as the linearly closed dense
subspace of $\widehat{C}/{\sim}$ generated by the fundamental
sequence $e$.

\begin{definition}[The space of all effective Banach spaces]
Let \BS\ be the set of all constructive Banach completions. This set
contains all effective Banach spaces up to isomorphism, and we
consider it as the space of all effective Banach spaces.

Consider now the second countable $T_0$-topology on \BS\ with
sub-basis given by the sets of the form
\[
U_{\la i,s,t,j \ra} = \set{(\widehat{C}/{\sim}, \|\phantom{x}\|_{\widehat{C}/{\sim}}, e)}
{a_\Q(i) < d_{\widehat{C}/{\sim}} (a_e(s), a_e(t)) < a_\Q(j)}.
\]
This topology on \BS\ is associated with the standard representation
$\d_{\BS}: \sbsq \Bai \to \BS$ defined by $\d_{\BS}(p)=
(\widehat{C}/{\sim}, \|\phantom{x}\|_{\widehat{C}/{\sim}}, e)$ if
and only if $p$ enumerates the set
\[
\set{\la i,s,t,j\ra}
{(\widehat{C}/{\sim}, \|\phantom{x}\|_{\widehat{C}/{\sim}}, e) \in U_{\la i,s,t,j\ra}}.
\]
\end{definition}

We often write $(X, \|\phantom{x}\|, e) \in \BS$, or simply $X \in
\BS$, in place of $(\widehat{C}/{\sim},
\|\phantom{x}\|_{\widehat{C}/{\sim}}, e) \in \BS$, but we always
understand that the construction of $X$ as a constructive Banach
completion uniquely determines both the norm and the fundamental
sequence.

An element in $\BS$ with a computable name is a computable Banach
space in the sense of \cite{Bra08}. Since we view \BS\ as the space
of all effective Banach spaces, we view the subset of its computable
elements as the set of all computable Banach spaces.

\subsection{Representations of closed and compact sets and of linear bounded functions}
We recall some representations of closed and compact subsets of
metric spaces which have been widely used in the literature (see
e.g. \cite{BP}).

\begin{definition}[Representations of closed
sets]\label{repr:closed} For an effective metric space $X$ we denote
by $\mcA_+(X)$ and $\mcA_-(X)$ the hyperspace of closed subsets of
$X$ viewed respectively with representations $\p^X_+$ and $\p^X_-$,
where
\begin{itemize}
 \item $\p^X_+ (p) =A$ if and only if $p_i \in \dom(\d_X)$ for
     all $i \in \N$ (where $p_i(j) = p(\la i,j \ra)$) and $A =
     \overline{\set{\d_X(p_i)}{i \in \N}}$;
 \item $\p^X_- (p) =A$ if and only if $X \setminus A = \bigcup
     B^X_{p(i)}$ (recall that $\{B^X_n\}$ enumerates all
     rational open balls in $X$).
\end{itemize}
\end{definition}

In the reverse mathematics literature, the elements of $\mcA_+(X)$
and of $\mcA_-(X)$ are called respectively separably closed sets and
closed sets.

\begin{definition}[Representations of compact sets]
For an effective metric space $X$ we denote by $\mcK(X)$ and
$\mcK_-(X)$ the hyperspace of compact subsets of $X$ viewed
respectively with representations $\k^X$ and $\k^X_-$, where
\begin{itemize}
    \item $\k^X (p) = K$ if and only if $p$ enumerates
        \[
        \textstyle{\set{s}{K \subseteq \bigcup_{i < |s|} B^X_{s(i)}
        \land \fa i < |s|\, K \cap B^X_{s(i)} \neq \eps};}
        \]
    \item $\k^X_-(p) = K$ if and only if $p$ enumerates
        \[
        \textstyle{\set{s}{K \subseteq \bigcup_{i < |s|} B^X_{s(i)}}.}
        \]
\end{itemize}
\end{definition}

We are now in a position to define the domain of the multi-valued
function corresponding to the separable Hahn-Banach theorem. In
doing so, we borrow an idea from \cite{Weih01}: to denote a partial
continuous function with closed domain $f$ we employ a realizer of
$f$ and a name for $\dom(f)$ with respect to a representation of the
hyperspace of closed sets. Moreover, we further generalize and
consider closed subsets of arbitrary elements of \BS.

\begin{definition}[Space of partial linear bounded functionals]
Let \PF\ be the set of all quadruples $(X,A,f,r)$ (usually written
$f_{(X,A,r)}$) such that
\begin{itemize}
   \item $X \in \BS$;
   \item $A$ is a closed linear subspace of $X$;
   \item $f: A \to \R$ is linear and bounded;
   \item $r = \|f\| \in \R$ (recall that the norm $\|f\|$ is defined by
$\|f\|=\sup\set{|f(x)|}{x \in A \land \|x\|=1}$).
\end{itemize}
The representation of \PF\ is defined by $\d_{\PF} (p) =
f_{(X,A,r)}$ if and only if
\begin{itemize}
    \item $\d_{\BS}(p_0)=X$;
    \item $\p^X_+ (p_1)= A$;
    \item $\e(p_2)$ is a realizer of $f$;
    \item $\d_\R(p_3)=r$
\end{itemize}
(the $p_i$'s were defined in Definition \ref{repr:closed}).
\end{definition}

\section{The Hahn-Banach Theorem}\label{section:HB}

\subsection{The multi-valued function \HB}
We now come to the question of the computational complexity of the
Hahn-Banach Theorem. We start by giving the formal definition of the
Hahn-Banach multi-valued function.

\begin{definition}[Hahn-Banach multi-valued function]\label{HB}
Let $\HB: \sbsq \PF \toto \PF$ be the multi-valued function with
$\dom(\HB) = \{f_{(X,A,1)} \in \PF\}$ defined by
\[
\HB(f_{(X,A,1)}) = \set{g_{(X,X,1)}}{g \restriction A =f}.
\]
\end{definition}

For any computable normed space $X$, and in particular for any
computable Banach space, Brattka (\cite{Bra08}) first proves a
computable version of the Banach-Alaoglu Theorem. Then he shows that
for any computable Banach space there is a \SI02-computable
multi-valued function that maps $f$ to the extensions $g$ which
satisfy the requirements of the Hahn-Banach Theorem (although the
notion of \SI02-computable multi-valued function is not explicitly
used in \cite{Bra08}). We will use the same ideas to show that \HB\
is $\Sep$-computable, but some fundamental modifications are
necessary.

First, we point out that Brattka's proof is not uniform, since it
breaks up into two cases, depending on whether the dimension of the
normed space $X$ is finite or infinite. Even for countable vector
spaces over $\Q$, the function establishing whether the space is
finite-dimensional is not computable, and indeed not even
$\Sep$-computable\footnote{\label{Q}To see this, let $Q = \set{w \in
\Q^{<\N}}{|w| = 0 \lor w(|w|-1) \neq 0}$. We view $Q$ as a vector
space over $\Q$ in the obvious way, and let $\Vect = \set{V
\subseteq Q}{V \text{ is a vector space}}$. Let $Dim: \Vect \to 2$
be defined by
\[
Dim(V)=
\begin{cases}
0 & \text{if $\dim(V) = \infty$;}\\
1 & \text{if $\dim(V) < \infty$.}
\end{cases}
\]
Define the computable function $V: \Bai \to \Vect$ by $V(p) = \set{w
\in Q}{\forall i < |w|\, p(i)=0}$. Then $Dim \circ V = \Omega$ and
thus $\Omega \cred Dim$.}. Since we are interested in evaluating the
complexity of a multi-valued function which takes in input any
possible effective Banach space, we need to get rid of this
dichotomy. We will thus give a uniform structure to Brattka's proof,
also simplifying some steps along the way.

\subsection{Selecting points in closed subsets of compact sets}
Brattka's proof uses a multi-valued choice function on compact sets
to select elements in the set $H(f)$ of all extensions of $f$ (this
is the \SI02-computable step in that proof). Actually, in this
approach one needs to consider $H(f)$ as a \emph{compact} subset of
a compact space $\widehat{X}$. We do not need this step, since the
simpler property of being \emph{closed} in the compact set
$\widehat{X}$ is enough to apply a selection multi-valued function
which is $\Sep$-computable, by Theorem \ref{sel} below.

Although not necessary to our main goal, in Theorem \ref{sel} we
also formulate a general condition of \Sep-completeness for this
selection problem. To achieve this, we recall the following notion
already used in \cite{BraGhe07,BraGhe08}.

\begin{definition}[Richness]
A computable  metric space $X$ is \emph{rich}, or \emph{computably
uncountable}, if there is a computable injective map $\i: \Can
\hookrightarrow X$.
\end{definition}

It is known that if $\i$ is as above then also its partial inverse
$\i^{-1}$ is computable, and thus $\i$ is a computable embedding.

\begin{theorem}\label{sel}
For a computable metric space $(X,d,a)$, let $\Sel_{\mcK(X)}: \sbsq
\mcK(X) \times \mcA_-(X) \toto X$ be the multi-valued function with
domain $\set{(K,A)}{\eps \neq A \subseteq K}$ and
\[
\Sel_{\mcK(X)} (K,A) = A
\]
(where on the left-hand side $A$ is a member of the hyperspace of
the closed subsets of $X$, while on the right-hand side is a set of
points). Thus, $\Sel_{\mcK(X)}$ is the multi-valued function which
selects a point from a nonempty closed subset of a compact subset of
$X$. Then
\begin{enumerate}[(1)]
    \item $\Sel_{\mcK(X)}$ is \Sep-computable;
    \item if $X$ is rich then $\Sel_{\mcK(X)}$ is \Sep-complete.
\end{enumerate}
\end{theorem}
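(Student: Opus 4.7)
The plan is to prove $\Sel_{\mcK(X)} \cred \Sep$ in general (part (1)) and, under the richness hypothesis, $\Sep \cred \Sel_{\mcK(X)}$ (part (2), yielding completeness together with part (1)). Throughout, I use $\Sep \ceq \Path \ceq \PathB$ from Theorem \ref{Sep/Path} and Lemma \ref{boundedKL} to switch among the three reference problems as convenient.

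For part (1), I would reduce $\Sel_{\mcK(X)}$ to $\PathB$ by a standard compact-choice tree construction. From the $\k^X$-name of $K$ I produce a computable sequence $\mathcal{C}_n$ of finite rational-ball covers of $K$, each ball of radius $<2^{-n}$, and set $b(n)=|\mathcal{C}_n|$. Let $(B^X_{m_0},B^X_{m_1},\dots)$ enumerate the rational balls coming from the $\p^X_-$-name of $A$. Define the bounded tree $T$ to consist of tuples $(j_0,\dots,j_{n-1})$ indexing choices $B_l \in \mathcal{C}_l$ whose centers satisfy a decidable Cauchy-chaining condition and such that no $B_l$ has been eliminated by stage $n$, where ``eliminated'' is a stage-$n$ decidable event designed so that, in the limit, a ball is eliminated iff it is disjoint from $A$. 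Such a design exists thanks to the compactness of $K$: for any rational open ball $B$ disjoint from $A$, the compact set $B \cap K$ is covered by finitely many $B^X_{m_j}$'s, a semi-decidable property extracted from $\k^X(K)$; one freezes this into a decidable approximation via slightly shrunken closed subballs. Since $A \neq \eps$, any $x \in A$ spawns an infinite path (pick any $B_n \in \mathcal{C}_n$ with $x$ comfortably inside), showing $(T,b) \in \InfTrB$. Any infinite path through $T$ yields a Cauchy sequence of centers; since the balls along the path are never eliminated, each meets $A$, so the limit belongs to $\overline{A} = A$. Outputting this limit completes the reduction.

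For part (2), assume $\iota: \Can \hookrightarrow X$ is computable, whence $\iota^{-1}$ is computable on $\iota(\Can)$. I would prove $\Path \cred \Sel_{\mcK(X)}$. Given $T \in \InfTr$, set $K=\iota(\Can)$ and $A=\iota([T])$. A $\k^X$-name for $K$ is computable from $\iota$ by pushing the canonical finite covers of $\Can$ forward using a modulus of continuity of $\iota$. A $\p^X_-$-name for $A$ is enumerated by listing rational balls $B$ whose slightly shrunken closed subball $\overline{B'}$ satisfies either $\overline{B'} \cap K=\eps$ (semi-decidable from $\k^X(K)$) or $\iota^{-1}(\overline{B'} \cap K) \subseteq \bigcup_{s \in F}\set{q \in \Can}{s \sqsubseteq q}$ for some finite $F \subseteq \Seqd \setminus T$, which is semi-decidable because $\overline{B'} \cap K$ is compact, $\iota^{-1}$ is continuous on $K$, and $s \notin T$ is decidable. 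Applying $\Sel_{\mcK(X)}$ to $(K,A)$ returns some $y \in A$, and $\iota^{-1}(y) \in [T]$ is the required infinite path of $T$.

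The main obstacle lies in part (1): the elimination condition must be simultaneously decidable at any fixed stage, strong enough to eventually prune every branch whose limit would otherwise lie outside $A$, and weak enough to leave untouched the branches that correspond to genuine points of $A$. A purely local criterion (containment of $B$ in a single $B^X_{m_j}$) is decidable but too weak, while the target condition $B \cap A = \eps$ is not semi-decidable as stated. The fix exploits the compactness of $K$: disjointness from $A$ of a slightly shrunken closed subball $\overline{B'} \subseteq B$ is a monotone semi-decidable event, obtained by seeking a finite cover of $\overline{B'}$ by the $B^X_{m_j}$'s, and this is precisely what one folds into a decidable stage-by-stage tree condition. Once this technical point is settled, both the infiniteness of the tree and the placement of path-limits in $A$ follow from the standard arguments sketched above.
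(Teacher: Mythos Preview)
Your plan is correct and both halves can be completed along the lines you sketch, but in part~(1) you take a harder road than the paper does, and in part~(2) there is a small slip.

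For part~(1) the paper builds the bounded tree from the same raw data --- finite $2^{-n}$-covers $(x^n_j)_{j<q(n)}$ of $K$ extracted from the $\k^X$-name, and an enumeration $(b_i,\a_i)$ of the removed balls from the $\p^X_-$-name --- but its pruning clause is simply
\[
\forall n,i,k<|s|\quad d\bigl(x^n_{s(n)},b_i\bigr)_{[k]} \ \geq\ \a_i - 2^{-n} - 2^{-k},
\]
a purely local decidable condition on the chosen \emph{centers}. For an infinite path this yields $d(x,b_i)\geq\a_i$ for every $i$ in the limit, so $x\in A$; no compactness-based ``elimination'' of balls is needed at all. In other words, the obstacle you single out in your last paragraph disappears once one tests distances from centers to the centers $b_i$ with the right slack, rather than trying to certify that an entire (shrunken) ball lies in the removed region. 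Your mechanism does work, but it costs you extra bookkeeping (for instance, you must arrange that the shrunk balls $\overline{B'}$ still cover $K$, so that a witness $x\in A$ is genuinely ``comfortably inside'' some ball at every level).

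For part~(2) the paper proceeds in two clean steps: first $\Path \cred \Sel_{\mcK(\Can)}$ via $T\mapsto(\Can,A_T)$ with $A_T=\Path(T)\in\mcA_-(\Can)$, and then it pushes forward through $\i$ by invoking a cited Embedding Theorem guaranteeing that $A\mapsto\i(A)$ is $(\p^{\Can}_-,\p^X_-)$-computable. Your direct construction of a $\p^X_-$-name for $\i(\Path(T))$ is in the same spirit, but as written you enumerate the \emph{larger} ball $B$ once the condition on $\overline{B'}$ fires; that is wrong, since $\overline{B'}\cap A=\emptyset$ does not give $B\cap A=\emptyset$. Enumerate $B'$ instead, and then your coverage argument for $X\setminus A$ goes through.
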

\begin{proof}
(1) Given $K \in \mcK(X)$ we can uniformly obtain $q \in \Bai$ and
an infinite sequence of finite sequences $(\la x^n_j\ra _{j <
q(n)})$ of elements of $X$ such that for every $n \in \N$ we have $K
\subseteq \bigcup_{j < q(n)} B^X (x^n_j; 2^{-n})$.

For $A \in \mcA_-(X)$ such that $\eps \neq A \subseteq K$, we can
uniformly obtain sequences $(b_i)$ in $\ran(a)$ and $(\a_i)$ in $\Q$
such that $X \setminus A = \bigcup_{i\in \N} B^X (b_i; \a_i)$. We
select an element of $A$ by approximating points which do not belong
to any $B^X (b_i;\a_i)$. More precisely, we construct a tree $T =
T(K,A) \subseteq \Seq$ by letting $s \in T$ if and only if
\begin{itemize}
  \item $\fa n<|s|\, s(n) < q(n)$;
  \item $\fa n,i,k < |s|\, d(x^n_{s(n)}, x^i_{s(i)})_{[k]} \leq
      2^{-n} + 2^{-i} +2^{-k}$;
  \item $\fa n,i,k < |s|\, d(x^n_{s(n)}, b_i)_{[k]} \geq \a_i -
      2^{-n} - 2^{-k}$
\end{itemize}
where for $a \in \R$, $a_{[k]}$ is a rational approximation within
$2^{-k}$ of $a$.

Notice that, since $A \neq \eps$, $(T,q) \in \InfTrB$. For all $p\in
\PathB(T,q)$ we have that $x = \lim x^n_{p(n)}$ exists, is
computable from $p$, and does not belong to any $B^X (b_i;\a_i)$.
Hence $x \in A$. This gives $\Sel_{\mcK(X)} \cred \PathB$. By Lemma
\ref{boundedKL} we have $\Sel_{\mcK(X)} \cred \Sep$.

(2) By Theorem \ref{Sep/Path} it suffices to show $\Path \cred
\Sel_{\mcK(X)}$ when $X$ is rich.

First, we show $\Path \cred \Sel_{\mcK(\Can)}$. For $T \in \InfTr$
define
\[
A_T = \Can \setminus \bigcup \set{B^\Can (t \conc \bar{0}; 2^{-(|t|-1)})}{t \notin T} \subseteq \Can.
\]
Since $B^\Can (t \conc \bar{0}; 2^{-(|t|-1)}) = \set{p \in \Can}{t
\sqsubseteq p}$ we have $\Path(T) = \Sel_{\mcK(\Can)} (\Can, A_T)$.
Since the map $T \mapsto (\Can, A_T)$ from $\InfTr$ to $\mcK(\Can)
\times \mcA_-(\Can)$ is computable, $\Path \cred \Sel_{\mcK(\Can)}$.

If $X$ is rich, let $\i: \Can \hookrightarrow X$ be a computable
injection. As observed in \cite{BraGhe07,BraGhe08}, $\ran(\i) \in
\mcK (X)$. By the proof of the Embedding Theorem of
\cite{BraGhe07,BraGhe08}, the map from $\mcA_- (\Can)$ to $\mcA_-
(X)$ which sends $A$ to $\i(A)$ is computable. Hence for every $A
\in \mcA_- (\Can)$ we have
\[
(\i^{-1} \circ \Sel_{\mcK(X)}) (\i(\Can), \i(A)) = \Sel_{\mcK(\Can)} (\Can, A).
\]
Using the notation of the first part of this proof, we have
\[
(\i^{-1} \circ \Sel_{\mcK(X)}) (\i(\Can), \i(A_T)) = \Path(T)
\]
for every $T \in \InfTr$. This shows $\Path \cred \Sel_{\mcK(X)}$.
\end{proof}

\subsection{Proof of $\HB \cred \Sep$}
Brattka's proof uses the Effective Independence Lemma of Pour-El and
Richards (\cite[p.142]{PR}). To make Brattka's argument uniform we
need a uniform version of that result.

\begin{lemma}[Uniform Effective Independence Lemma]\label{ueil}
For all $(X, \|\phantom{x}\|, e) \in \BS$ there exists $q \in \Bai$
such that, letting $R = \set{j>0}{q(j) = q(0)}$, $q$ restricted to
$\N \setminus R$ is one-to-one and $\set{(e \circ q) (j)}{j \in \N
\setminus R}$ is a (possibly finite) linearly independent set whose
linear span is dense in $X$.

Let $\zeta: \BS \toto \Bai$ be the multi-valued function such that
$\zeta(X, \|\phantom{x}\|, e)$ is the set of all $q$ satisfying the
condition above. Then $\zeta$ is computable.
\end{lemma}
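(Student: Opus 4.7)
This lemma is a uniform version of the classical Effective Independence Lemma of Pour-El and Richards, and the plan is to reformulate their construction so that the output $q$ is obtained computably from a $\d_{\BS}$-name for $(X, \|\phantom{x}\|, e)$. The essential device for uniformity is the use of $q(0)$ itself as a skip marker at positions $j > 0$ where the algorithm has not yet certified a new linearly independent vector; this avoids any case-split on whether the extraction process terminates (i.e.\ on whether $X$ is finite- or infinite-dimensional), and also handles, uniformly, the possibility that individual vectors $e(n)$ are zero.

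The algorithm extracts an ordered sublist of indices $n_0, n_1, \dots$ such that $e(n_0), e(n_1), \dots$ are linearly independent and have the same linear span as $\{e(n) : n \in \N\}$, which is dense in $X$. First, set $q(0) := n_0$ where $n_0$ is the least $n$ for which $\|e(n)\|$ can be effectively certified to be positive; non-triviality guarantees existence, and the distances $\|e(n)\|$ are available from the $\d_{\BS}$-name via the basic open sets $U_{\la i,s,t,j \ra}$. Then dovetail over pairs $(m, s) \in \N \times \N$ the semi-decidable tests \lq\lq $d(e(m), V_\ell) > 2^{-s}$\rq\rq, where $V_\ell$ is the span of the currently picked vectors and $m$ is not already among the picks; whenever such a test succeeds, add $n_\ell := m$ to the list. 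At each output step $j \geq 1$, set $q(j) := n_\ell$ if a new pick $n_\ell$ has just been added during step $j$, and $q(j) := q(0)$ otherwise.

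Correctness is then routine. The restriction $q \restriction (\N \setminus R)$ is one-to-one because each newly picked index $n_\ell$ is, by construction, not in $V_\ell$ and hence distinct from $n_0, \dots, n_{\ell-1}$; also $q(0) = n_0$ is only output at $j = 0$ in $\N \setminus R$. Linear independence of the picks follows from the positive-distance certifications. Density is by contradiction: if some $e(m)$ lay outside the closure of the picked span, then $d(e(m), V_\ell) \geq d(e(m), \overline{\operatorname{span}\{e(n_i)\}}) > 0$ for every $\ell$, so the dovetailing would eventually certify and pick $e(m)$. All steps are uniform in the $\d_{\BS}$-name, so $\zeta$ is computable.

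The main technical obstacle, and the heart of the Pour-El and Richards argument, is justifying that the test \lq\lq $d(e(m), V_\ell) > 2^{-s}$\rq\rq is semi-decidable uniformly, i.e.\ that $d(e(m), V_\ell)$ is a computable real whenever $v_0, \dots, v_{\ell-1}$ are already known linearly independent. One uses the prior positive-distance certifications from the stages at which each $v_i$ was added to obtain an effective lower bound on the operator norm of the inverse of the coordinate map $\R^\ell \to V_\ell$, $\a \mapsto \sum \a_i v_i$. That bound confines the minimization $\inf_{\a \in \R^\ell} \|e(m) - \sum \a_i v_i\|$ to an effectively bounded rational region, rendering the infimum computable and thereby legitimizing the semi-decision step of the algorithm above.
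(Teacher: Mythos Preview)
Your overall strategy---extract an independent subsequence of $e$ and use the index $q(0)$ as a repeatable skip marker so that no dimension case-split is needed---is exactly the paper's.  Where you diverge is in the certification of independence: you propose to compute the distance $d(e(m),V_\ell)$ as a real (bounding the coefficient region via the inverse of the coordinate map, using the earlier positive-distance witnesses) and then semi-decide ``$>2^{-s}$''.  The paper instead invokes the Pour-El--Richards criterion directly: $\{w_1,\dots,w_\ell\}$ is independent iff for some $m\geq 2\ell$ the minimum of $\|\sum\b_iw_i\|$ over an explicit finite rational set $S_{m,\ell}$ exceeds $2^{-m}\sum\|w_i\|$.  Both give a legitimate semi-decision for independence, and your inductive bounding argument for the coordinate map is correct.

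The genuine gap is in your scheduling and hence in the density argument.  You run only the one-sided semi-decidable test ``$d(e(m),V_\ell)>2^{-s}$'' against the \emph{current} span $V_\ell$ and dovetail over $(m,s)$.  But $V_\ell$ is a moving target: every time a pick is made, the certification procedure for $(m,s)$ must restart against the enlarged span, and nothing guarantees the dovetailing ever allocates enough steps between successive restarts for the test to fire.  Your sentence ``the dovetailing would eventually certify and pick $e(m)$'' is precisely the claim that needs proof, and as written it does not follow.  The paper closes this gap by making the per-stage test \emph{total}: at stage $n{+}1$, for each $i\leq n{+}1$ it runs in parallel (a)~the independence certification for $T_n\cup\{e(i)\}$ and (b)~the search for a rational $T_n$-combination within $2^{-(n+1)}$ of $e(i)$.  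One of (a),(b) must succeed, so the stage halts; the least $i$ with answer~(a) is picked, and any $i$ that is never picked receives answer~(b) at cofinitely many stages, which forces $e(i)\in\overline{\operatorname{span}\bigl(\bigcup_nT_n\bigr)}$.  Your proof becomes correct if you replace the dovetailing of semi-tests by such a two-sided halting test---or, equivalently, since your last paragraph shows $d(e(i),V_\ell)$ is a computable real, by computing it at stage $n{+}1$ to precision $2^{-(n+2)}$ and comparing with $2^{-(n+1)}$.
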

\begin{proof}
We prove at once both statements of the Lemma by defining a
computable realizer for $\zeta$. To this end we construct, uniformly
in a name for $X \in \BS$, $q$ by stages. We will also keep track of
$R$ by letting $R_n = \set{0 < j \leq n}{q(j) = q(0)}$. Let $N$ be
such that $\|e(N)\| \neq 0$.

At stage 0 we let $q(0) = N$ and $R_0 =\eps$.

At stage $n+1$ we suppose to have defined $q(0), \dots, q(n)$ and
$R_n \subseteq \{1, \dots, n\}$ such that
\begin{itemize}
\item $q(j) = q(0) = N$ for all $j \in R_n$;
\item $T_n = \set{(e \circ q) (j)}{j \leq n \land j \notin R_n}$
    is linearly independent.
\end{itemize}
We let $T_n = \{v_1, \dots, v_k\}$ (obviously $k \leq n$).

For every $i \leq n+1$ we run a test, described below, which stops
after a finite amount of time with answer either (a) or (b). If the
answer is (a) then we are sure that $T_n  \cup \{e(i)\}$ is linearly
independent. If the answer is (b) then $e(i)$ can be approximated
within $2^{-(n+1)}$ by a rational linear combination of elements of
$T_n$. Therefore, if for some $i$ the answer is $(b)$ at every stage
$\geq i$, then actually $e(i)$ belongs to the closure of the linear
span of $T = \bigcup_{n\in \N} T_n$.

The test is based on the following fact, proved in \cite[p.143]{PR}.
For $m, \ell \in \N$, let $S_{m,\ell}$ be the set of all of all $\la
\b_1, \dots, \b_\ell\ra \in \Q^\ell$ such the denominators of $\b_1,
\dots, \b_\ell$ are $2^{m}$ and $1 \leq |\b_1|^2 + |\b_2|^2 + \dots
+ |\b_\ell|^2 \leq 4$. (The $S_{m,\ell}$'s are finite and can be
uniformly computably enumerated in $m$ and $\ell$.) Pour-El and
Richards prove that a finite subset $\{w_1, \dots, w_\ell\}$ of a
Banach space is linearly independent if and only if for some $m \geq
2 \ell$
\[
\min \set{\|\b_1 w_1+ \dots + \b_\ell w_\ell\|}{\la \b_1, \dots, \b_\ell\ra
\in S_{m,\ell}} > 2^{-m} \cdot (\|w_1\|+ \dots + \|w_\ell\|).
\]

Given $i \leq n+1$ the test alternatively searches
\begin{enumerate}[\quad (a)]
  \item for $m \geq 2(k+1)$ such that
\begin{multline*}
\min \set{\|\b_1 v_1+ \dots + \b_k v_k + \b_{k+1} e(i)\|}{\la
\b_1, \dots, \b_{k+1}\ra \in S_{m,k+1}} > \\
2^{-m} \cdot (\|v_1\| + \dots + \|v_k\| + \|e(i)\|),
\end{multline*}
  \item and for $\g_1, \dots, \g_k \in \Q$ such that
\[
\|e(i)-(\g_1 v_1+ \dots + \g_k v_k)\|<2^{-(n+1)}.
\]
\end{enumerate}
By the fact mentioned above, at least one of the two searches
succeeds, and the test will answer (a) or (b) according to the first
one succeeding.

If for some $i \leq n+1$ the answer is (a), we pick the least $i$
with this property and set $q(n+1) = i$, so that $R_{n+1} = R_n$ ($i
\neq N$ because $e(N) \in T_n$). Otherwise, if for all $i \leq n+1$
the answer is (b), then $q(n+1) = N$ and hence $R_{n+1} = R_n \cup
\{n+1\}$.

It is straightforward to check that $\set{(e \circ q) (j)}{j \in \N
\setminus R}$ is linearly independent and dense in $X$.
\end{proof}

The main feature of Lemma \ref{ueil} is that we can uniformly find a
sequence of linearly independent vectors whose linear span is dense
in $X$ by allowing repetitions of the single element $(e \circ q)
(0)$ and forgetting all occurrences of this element after the first.

\begin{definition}[$\BS^+$]
Let $\BS^+$ be the graph of the computable multi-valued function
$\zeta$ of Lemma \ref{ueil}. In other words,
\[
\BS^+ = \set{((X, \|\phantom{x}\|, e), q) \in \BS \times \Bai}{q \in \zeta (X, \|\phantom{x}\|, e)}.
\]
When we write $X^+ \in \BS^+$ we mean that $X \in \BS$ and $X^+ =
(X,q)$ for some $q \in \zeta (X)$.
\end{definition}

Using Lemma \ref{ueil} we obtain a uniform proof of Lemma 3 in
\cite{Bra08}.

\begin{definition}[Identity problem]
For an effective Banach space $(X, \|\phantom{x}\|, e)$ the identity
problem for $(X, \|\phantom{x}\|, e)$ is the set
\[
I (X, \|\phantom{x}\|, e) = \set{(s,t) \in \Seq \times \Seq}{a_e(s) = a_e(t)}.
\]
\end{definition}

\begin{lemma}[Identity problem lemma]\label{ip}
Given $((X, \|\phantom{x}\|, e), q) \in \BS^+$ let $e' = e \circ q$.
\begin{enumerate}[(1)]
\item The function $((X, \|\phantom{x}\|, e), q) \mapsto e'$ is
    computable;
\item $id: (X, \|\phantom{x}\|, e) \to (X, \|\phantom{x}\|, e')$
    and its inverse are uniformly computable in $((X,
    \|\phantom{x}\|, e), q) \in \BS^+$;
\item the function which associates to $((X, \|\phantom{x}\|,
    e), q) \in \BS^+$ the characteristic function of $I (X,
    \|\phantom{x}\|, e')$ is computable.
\end{enumerate}
\end{lemma}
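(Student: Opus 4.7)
The plan is to dispatch the three claims in order, exploiting throughout the fact that a $\d_\BS$-name of $X$ gives effective access to $(s,t) \mapsto d_X(a_e(s), a_e(t))$, hence to the norm on $X$, while $q$ is handed over directly as part of the input. For part (1), I would simply observe that each $e(k) = a_e(\bar{0}[k]*1)$ lies in the canonical dense sequence of $X$, so the constant sequence all of whose entries code $\bar{0}[q(n)]*1$ is a trivial $\d_X$-Cauchy name for $e'(n) = e(q(n))$; this is computable uniformly in $(X^+, n)$, and type conversion yields computability of $X^+ \mapsto e' \in \mcC(\N, X)$. For the direction $id : (X, e') \to (X, e)$ of part (2) I would work algebraically: given $t \in \Seq$, for each $k \in \N$ compute the rational $S_k = \sum \set{a_\Q(t(i))}{i < |t| \land q(i) = k}$; only finitely many $S_k$ are nonzero (those with $k \leq \max\{q(0),\dots,q(|t|-1)\}$), and concatenating codes for them produces $\tilde t \in \Seq$ with $a_e(\tilde t) = a_{e'}(t)$, giving a termwise translation of Cauchy names.

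The converse direction $id : (X, e) \to (X, e')$ will be the main obstacle of the lemma, since $e(i)$ is in general only a limit of rational combinations of $e'(j)$'s. My approach would exploit the fact that, by the choice $q \in \zeta(X)$ from Lemma \ref{ueil}, the $\Q$-linear span of $\set{e'(j)}{j \in \N}$ is dense in $X$: given a $\d_{X,e}$-Cauchy name $(s_n)$ of $x$, for each $n$ I would search pairs $(t, k) \in \Seq \times \N$ until finding one with rational approximation $d_X(a_e(s_n), a_{e'}(t))_{[k]} + 2^{-k} < 2^{-(n+2)}$, a decidable rational condition. Density guarantees the search terminates, and the successful pair witnesses $\|a_e(s_n) - a_{e'}(t)\|_X < 2^{-(n+2)}$, so such a $t$ may be named $t_n$. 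A modest shift of indices then converts $(t_n)$ into a $\d_{X,e'}$-Cauchy name for $x$.

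For part (3), I would fix $(s,t) \in \Seq \times \Seq$, set $n = \max(|s|, |t|)$, and for $i < n$ compute the rational $c_i$ that is the coefficient of $e'(i)$ in $a_{e'}(s) - a_{e'}(t)$ (padding missing entries by the code of $0$, so that $a_\Q = 0$ there). From $q$ I would extract the finite set $R_{<n} = \set{1 \leq j < n}{q(j) = q(0)}$ directly. By Lemma \ref{ueil}, the family $\set{e(q(j))}{j \in \N \setminus R}$ is linearly independent, while $e(q(j)) = e(q(0))$ precisely for $j \in \{0\} \cup R$; consequently $a_{e'}(s) - a_{e'}(t) = 0$ in $X$ if and only if $c_0 + \sum_{j \in R_{<n}} c_j = 0$ and $c_i = 0$ for every $i < n$ with $i \notin \{0\} \cup R_{<n}$. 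Both conditions are decidable rational arithmetic, so $\chi_{I(X,e')}(s,t)$ is computable uniformly in $X^+$, and type conversion delivers the lemma.
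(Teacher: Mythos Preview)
Your proposal is correct and follows essentially the same approach as the paper's proof: part (1) is trivial; for part (2) the easy direction $(X,e') \to (X,e)$ is handled by the algebraic rewrite $e'(n)=e(q(n))$, and the hard direction $(X,e) \to (X,e')$ by an unbounded search exploiting the density of the $\Q$-span of $\ran(e')$; part (3) is decided by rational arithmetic on the coefficients, using the linear independence of $\set{e'(j)}{j\notin R}$ together with $e'(j)=e'(0)$ for $j\in\{0\}\cup R$. The only cosmetic difference is that in the hard direction the paper searches for $e'$-approximants to each basis vector $e(i)$ and then extends, whereas you search directly for an $e'$-approximant to each term $a_e(s_n)$ of a given Cauchy name---the underlying idea is identical.
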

\begin{proof}
(1) is obvious.

(2) For $id$ it is enough to show how to uniformly compute for any
$i \in \N$ a $p \in \Bai$ such that $((a_{e'} \circ p) (j))$ is a
Cauchy sequence converging effectively to $e(i)$. Let $R =
\set{j>0}{q(j) = q(0)}$ as in Lemma \ref{ueil}. The definition of
$p$ is by stages. Before stage $n$ we have defined $p[j_n]$ with
$j_n \leq n$ and at that stage we possibly define $p(j_n)$ as
follows. For each $s \leq n$ check whether
\[
\textstyle{d \left( e(i), \sum_{k < |s|, k \notin R} a_\Q (s(k))
\cdot e'(k) \right)_{[j_n+2]} < 2^{-(j_n+2)}} \tag{*}
\]
where, as in the proof of Theorem \ref{sel}, for $a \in \R$,
$a_{[k]}$ is a rational approximation within $2^{-k}$ of $a$. If (*)
holds for some $s \leq n$ let $p(j_n) = s$ (so that $j_{n+1} = j_n
+1$). If (*) fails for all $s \leq n$ do nothing, i.e.\ let $j_{n+1}
= j_n$. Since $e'$ is a fundamental sequence we have $\lim j_n =
\infty$, so that $p(j)$ is defined for every $j$. It is
straightforward to check that $p$ has the desired property.

The uniform computability of $id^{-1}$ is immediate, since $e'(n) =
e(m)$ whenever $q(n)=m$.

(3) Given $((X, \|\phantom{x}\|, e), q) \in \BS^+$ let $R^* = R \cup
\{0\} = \set{j}{q(j) = q(0)}$. To check whether $(s,t) \in I
(X,\|\phantom{x}\|,e')$ recall that $a_{e'}(s) = \sum_{i < |s|}
a_\Q(s(i)) \cdot (e \circ q) (i)$ and similarly for $a_{e'}(t)$.
Assuming $|s| \leq |t|$ we have that $a_{e'}(s) = a_{e'}(t)$ is
equivalent to the conjunction of the following conditions
\begin{itemize}
  \item $\fa i \leq |s| (i \notin R^* \to s(i)= t(i))$;
  \item $\fa i \leq |t| (i \geq |s| \land i \notin R^* \to
      a_\Q(t(i)) = 0)$;
  \item $\sum_{i\leq |s|, i \in R^*} a_\Q(s(i)) = \sum_{i \leq
      |t|, i \in R^*} a_\Q(t(i))$.
\end{itemize}
Since each of these conditions is computable in $R^*$, and hence in
$q$, this equivalence completes the proof.
\end{proof}

We now consider the space $\R^\N$ equipped with the (slightly
nonstandard) metric
\[
d((x_n), (y_n)) =
\sup \SET{\frac1{2^n} \frac{|x_n-y_n|}{1+|x_n-y_n|}}{n \in \N}.
\]
If $a(s) = (a_\Q(s(0)), \dots, a_\Q(s(|s|-1)), 0, 0 \dots)$ then it
is easy to check that $(\R^\N,d,a)$ is a computable metric space.

The main reason for using $d$ instead of the standard textbook
metric for $\R^\N$ (defined by a series rather than a $\sup$) is
that the open balls with respect to $d$ are of the form $I_0 \times
\dots \times I_n \times \R \times \R \times \cdots$, where each $I_i
\subseteq \R$ is an open interval. Of course, both metrics are
compatible with the product topology of $\R^\N$.

\begin{lemma}\label{product}
The function which maps every $(x_n) \in \R^\N$ to the compact space
$\prod_{n\in \N} [-|x_n|, |x_n|] \in \mcK (\R^\N)$ is computable.
\end{lemma}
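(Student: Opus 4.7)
The plan is to show how, uniformly from a $\d_{\R^\N}$-name of $(x_n)$, one can computably enumerate a $\k^{\R^\N}$-name of $K = \prod_{n \in \N}[-|x_n|,|x_n|]$. Unfolding the definition of $\k^{\R^\N}$, this means enumerating exactly the set of finite sequences $s$ such that
\begin{itemize}
\item[(a)] $K \subseteq \bigcup_{i<|s|} B^{\R^\N}_{s(i)}$, and
\item[(b)] $K \cap B^{\R^\N}_{s(i)} \neq \emptyset$ for every $i < |s|$.
\end{itemize}
I would verify that both conditions are uniformly semi-decidable from a name of $(x_n)$, and then interleave the two procedures, emitting $s$ only when both have been confirmed.

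The crucial feature of the chosen metric $d$ (highlighted in the paragraph preceding the lemma) is that every rational open ball $B^{\R^\N}_j$ has the form $I_0^j \times \cdots \times I_{N_j}^j \times \R \times \R \times \cdots$, with each $I_n^j$ an open interval with rational (or infinite) endpoints. Condition (b) then reduces to asking, for every $n \leq N_{s(i)}$, that $I_n^{s(i)} \cap [-|x_n|,|x_n|] \neq \emptyset$, equivalently that $\inf I_n^{s(i)} < |x_n|$ and $-|x_n| < \sup I_n^{s(i)}$. Since $|x_n|$ is a computable real uniformly in the name of $(x_n)$, both inequalities are $\SI01$, so (b) is uniformly semi-decidable.

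For (a), set $N = \max_{i<|s|} N_{s(i)}$ and, for each $i < |s|$ and $n \leq N$, let $J_n^i = I_n^{s(i)}$ if $n \leq N_{s(i)}$ and $J_n^i = \R$ otherwise. Since the coordinates $n > N$ are unconstrained in every ball of the finite union, (a) is equivalent to the finite-dimensional inclusion $\prod_{n \leq N}[-|x_n|,|x_n|] \subseteq \bigcup_{i<|s|} \prod_{n \leq N} J_n^i$. From the name of $(x_n)$ I can uniformly compute, for each $k$, rational upper bounds $b_n^k$ with $|x_n| \leq b_n^k \leq |x_n| + 2^{-k}$, giving enlarged rational closed boxes $\prod_{n \leq N}[-b_n^k,b_n^k] \supseteq \prod_{n \leq N}[-|x_n|,|x_n|]$. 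Whether such a closed rational box is contained in a given finite union of open rational boxes in $\R^{N+1}$ is decidable (refine the grid generated by all coordinate endpoints and check each atomic cell). By compactness, (a) holds if and only if, for some $k$, the enlarged rational box is already covered, since the enlarged boxes shrink to $\prod_{n \leq N}[-|x_n|,|x_n|]$ as $k \to \infty$. Hence (a) is also uniformly $\SI01$.

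Interleaving the two semi-decision procedures and emitting $s$ precisely when both verifications succeed yields the required enumeration, proving computability of the map. The only slightly delicate point is the decidability of closed-box-by-open-boxes inclusion in $\R^{N+1}$, which is an elementary fact of finite-dimensional computational geometry; the remainder of the argument is bookkeeping to keep everything uniform in the name of $(x_n)$.
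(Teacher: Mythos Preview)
Your argument is correct, but it proceeds along a different route from the paper's. The paper invokes a result from \cite{BP} to reduce the task to computing $Y_{(x_n)}$ separately as an element of $\mcA_+(\R^\N)$ and of $\mcK_-(\R^\N)$. For the $\mcA_+$ part it writes down an explicit computable retraction $\r:\R^\N\times\R^\N\to\R^\N$ onto the box and pushes the dense sequence through it; for the $\mcK_-$ part it observes that non-coverage by a finite family of open boxes is witnessed by one of finitely many ``corner'' points with coordinates among the interval endpoints, so coverage is r.e.\ in $(x_n)$. You instead work directly with the $\k^{\R^\N}$ representation, splitting into the coverage condition (a) and the intersection condition (b): your treatment of (b) via coordinatewise interval intersection replaces the paper's $\mcA_+$ computation, and your treatment of (a) via rational over-approximations of the box together with the decidable grid-refinement test replaces the paper's $\mcK_-$ argument. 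Your approach is self-contained (no appeal to \cite{BP}) and the compactness step for (a) is perhaps more transparent than the corner-point equivalence used in the paper; conversely, the paper's retraction gives a slightly slicker handling of the positive information. Both arguments exploit the same feature of the chosen metric, namely that rational balls are finite-dimensional open boxes.
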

\begin{proof}
The proof of this Lemma essentially consists in checking that the
proof of \cite[Lemma 4]{Bra08} is uniform. In doing so we spell out
a few more details of the proof.

To simplify the notation let $Y_{(x_n)} = \prod_{n\in \N} [-|x_n|,
|x_n|]$.

By \cite[Theorems 3.7, 3.8 and Proposition 4.2.2]{BP}, it suffices
to show that the function $(x_n) \mapsto Y_{(x_n)}$ is computable
when $Y_{(x_n)}$ is viewed as an element of $\mcA_+ (\R^\N)$ and of
$\mcK_- (\R^\N)$.

We first deal with $\mcA_+ (\R^\N)$. Define a computable $\r: \R^\N
\times \R^\N \to \R^\N$ by
\[
\r((x_n), (y_n)) = (\max \{-|x_n|, \min\{y_n,|x_n|\}\}).
\]
Then $\set{\r((x_n), a(s))}{s \in \Seq}$ is dense in $Y_{(x_n)}$.
This shows that the function mapping $(x_n) \in \R^\N$ to $Y_{(x_n)}
\in \mcA_+ (\R^\N)$ is computable.

To compute $Y_{(x_n)}$ as an element of $\mcK_- (\R^\N)$ we need to
show that we can enumerate a list of all finite coverings of
$Y_{(x_n)}$ consisting of rational open balls. In other words, we want
to show that the set of all finite sets $\{B_0, \dots, B_k\}$ of
rational open balls in $\R^\N$ such that $Y_{(x_n)} \subseteq
\bigcup_{i=0}^k B_i$ is r.e.\ in $(x_n)$. By our choice of the metric,
each $B_i$ is of the form $(\a_0^i, \b_0^i) \times \dots \times
(\a_{m_i}^i, \b_{m_i}^i) \times \R \times \R \times \cdots$ with $m_i
\in \N$ and $\a_0^i, \b_0^i, \dots, \a_{m_i}^i, \b_{m_i}^i \in \Q$. Let
$m = \max \set{m_i}{i \leq k}$. Now notice that $Y_{(x_n)} \nsubseteq
\bigcup_{i=0}^k B_i$ is equivalent to the existence of $\g_0, \dots,
\g_m \in \Q$ such that $\g_n \in \set{\a_n^i, \b_n^i}{i \leq k}$ for
each $n \leq m$ and
\[
(\g_0, \dots, \g_m, 0, 0, \dots) \in Y_{(x_n)} \setminus \bigcup_{i=0}^k B_i.
\]
Hence we need to check the co-r.e.\ in $(x_n)$ condition on finitely
many $(m+1)$-uples.
\end{proof}

We recall that the Banach-Alaoglu Theorem states that the closed
unit ball of the dual space of a normed vector space is compact in
the weak* topology. The next Theorem is a uniform version of Theorem
6 in \cite{Bra08}. The idea here is that we uniformly embed the
closed unit ball of the dual space of an element of \BS\ onto a
closed subset of a compact subset of $\R^\N$. Moreover this is done
taking into account the change of fundamental sequence provided by
Lemma \ref{ip}. In the statement of the Theorem the reader should
keep in mind that $\phi$ restricted to fixed $(X,q) \in \BS^+$ is
this embedding and $\chi$ computes its inverse, taking in input also
the norm of the functional.

\begin{theorem}[Uniform Computable Banach-Alaoglu Theorem]\label{BA}
Let $\phi: \sbsq \PF \times \Bai \to \R^\N$ be the function with
\[
\dom (\phi) = \set{(g_{(X,X,r)},q)}{r \leq 1 \land (X,q) \in \BS^+}
\]
defined by
\[
\phi (g_{(X,X,r)},q) = ((g \circ a_{e'}) (n)),
\]
where $e' = e \circ q$ as in Lemma \ref{ip}.
\begin{enumerate}[(1)]
\item $\phi$ is computable and $\phi (g_{(X,X,r)},q) = \phi
    (g'_{(X,X,r')},q)$ implies $g=g'$ and $r=r'$;
\item there exist computable functions $\widehat{\phantom{X}}:
    \BS^+ \to \mcK (\R^\N)$ and $\widetilde{\phantom{X}}: \BS^+
    \to \mcA_- (\R^\N)$ such that $\widetilde{X^+} \subseteq
    \widehat{X^+}$ and $\phi (g_{(X,X,r)},q) \in
    \widetilde{(X,q)}$;
\item there exists a computable $\chi: \sbsq \R^\N \times \BS^+
    \times \R \to \PF \times \Bai$ such that
      \[
\dom(\chi) = \SET{\left( (a_n), X^+, r \right)}{(a_n) \in \widetilde{X^+}
\land r = \sup \set{\tfrac{|a_n|}{\|a_{e'}(n)\|}}{n \in \N}},
      \]
      and we have always $\chi ((a_n), (X,q), r) = (g_{(X,X,r)},
      q)$ for some function $g$ such that $\phi(g_{(X,X,r)}, q) =
      (a_n)$.
\end{enumerate}
\end{theorem}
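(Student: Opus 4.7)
\emph{Part (1).} Using Lemma \ref{ip}(1) I would compute $e' = e \circ q$ uniformly in $X^+$, and for each $n$ produce a $\d_X$-name for $a_{e'}(n) \in X$ (a finite rational linear combination of the $e'(j)$'s). Feeding such a name to the realizer of $g$ supplied by the $\d_{\PF}$-name yields a $\d_\R$-name for $g(a_{e'}(n))$; assembling these in $n$ gives $\phi(g_{(X,X,r)}, q)$ computably. For injectivity, Lemma \ref{ueil} ensures that the linear span of $\ran(e')$ is dense in $X$, hence so is $\ran(a_{e'})$; two continuous functionals agreeing on a dense set coincide, so the hypothesis of equality forces $g = g'$ and $r = \|g\| = \|g'\| = r'$.

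\emph{Part (2).} The natural candidate is $\widehat{X^+} := \prod_{n \in \N} [-\|a_{e'}(n)\|, \|a_{e'}(n)\|]$. Since $(\|a_{e'}(n)\|)_n$ is a computable sequence in $\R^\N$ from $X^+$, Lemma \ref{product} delivers $\widehat{X^+}$ as a computable element of $\mcK(\R^\N)$; the inclusion $\phi(g_{(X,X,r)}, q) \in \widehat{X^+}$ when $r \leq 1$ follows from $|g(a_{e'}(n))| \leq \|g\|\,\|a_{e'}(n)\|$. For $\widetilde{X^+}$ I would take the set of $(a_n) \in \R^\N$ satisfying the closed conditions (a) $|a_n| \leq \|a_{e'}(n)\|$ for all $n$; (b) $a_u = \alpha a_s + \beta a_t$ whenever $c_u = \alpha c_s + \beta c_t$ holds in $C$ (a decidable condition on $(s,t,u,\alpha,\beta)$); and (c) $a_s = a_t$ whenever $(s,t) \in I(X, \|\cdot\|, e')$. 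The complement of each clause is a countable union of rational open boxes in the sup-metric introduced before Lemma \ref{product}, and by Lemma \ref{ip}(3) the index set of clause (c) is computable from $X^+$, so the enumeration is uniform in $X^+$. Hence $\widetilde{X^+}$ is a computable element of $\mcA_-(\R^\N)$, and $\phi(g_{(X,X,r)}, q) \in \widetilde{X^+}$ is immediate from the linearity of $g$ together with the norm bound.

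\emph{Part (3).} Conditions (a)--(c) force the assignment $g_0(a_{e'}(s)) := a_s$ to be a well-defined linear functional on the dense subspace $\ran(a_{e'})$ with $\|g_0\| \leq 1$; it extends uniquely to a bounded linear $g \colon X \to \R$, and the stipulated $r = \sup\{|a_n|/\|a_{e'}(n)\|\}$ equals $\|g\|$ by density. To build a realizer of $g$, given a $\d_X$-name of $x \in X$ I would use Lemma \ref{ip}(2) to obtain a computable sequence $(s_i)$ with $\|x - a_{e'}(s_i)\| \leq 2^{-i}$; then $|a_{s_i} - a_{s_j}| = |g_0(a_{e'}(s_i) - a_{e'}(s_j))| \leq \|g\| \cdot 2^{-\min(i,j)+1}$, so $(a_{s_i})$ is an effective Cauchy sequence in $\R$ whose limit is $g(x)$, yielding the required $\d_\R$-name. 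Combining this realizer with the input name of $r$ and with the components of $X^+$ produces a $\d_{\PF}$-name for $g_{(X,X,r)}$; $q$ is passed through unchanged, defining $\chi$.

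The main obstacle I anticipate is the uniform computability of $\widetilde{X^+}$ as an element of $\mcA_-(\R^\N)$, specifically clause (c): the identity relation $I(X, \|\cdot\|, e)$ for the original fundamental sequence is not decidable from a generic name of $X \in \BS$, and only the change to the linearly independent sequence $e'$ furnished by Lemmas \ref{ueil} and \ref{ip}(3) renders $I(X, \|\cdot\|, e')$ uniformly decidable from $X^+$. Without this shift the covering boxes of the complement of $\widetilde{X^+}$ could not be enumerated uniformly, and the whole construction would collapse.
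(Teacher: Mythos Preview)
Your proposal is correct and follows essentially the same route as the paper: the same $\widehat{X^+}$ via Lemma~\ref{product}, the same $\widetilde{X^+}$ cut out by linear compatibility conditions made decidable through the identity problem for $e'$ (your clauses (b)--(c) together amount exactly to the paper's single implication $a_{e'}(n)=\alpha a_{e'}(i)+\beta a_{e'}(j)\Rightarrow a_n=\alpha a_i+\beta a_j$, since one computes $k$ with $c_k=\alpha c_i+\beta c_j$ formally and then checks $(n,k)\in I(X,\|\cdot\|,e')$), and the same reconstruction of $g$ in part~(3) by approximating $x$ within $\ran(a_{e'})$. Your closing remark about why the switch from $e$ to $e'$ is essential is precisely the point the paper exploits via Lemma~\ref{ip}(3).
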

\begin{proof}
(1) is obvious.

(2) For $X^+ = (X,q)$ and $e' = e \circ q$, define
\[
\widehat{X^+} = \prod_{n \in \N} [-\|a_{e'}(n)\|,
\|a_{e'}(n)\|],
\]
and let $\widetilde{X^+}$ be the set of all $(a_n) \in \widehat{X^+}$
such that
\[
\forall \a, \b \in \Q\, \forall i,j,n \in \N (a_{e'}(n) = \a
a_{e'}(i) + \b a_{e'}(j) \implies a_n = \a a_i + \b a_j).
\]
By Lemma \ref{product} $\widehat{X^+} \in \mcK (\R^\N)$ and
$\widehat{\phantom{X}}$ is computable. To show that $\widetilde{X^+}
\in \mcA_- (\R^\N)$ notice that, given $\a$, $\b$, $i$, and $j$, we
can compute $k$ such that $\a a_{e'}(i) + \b a_{e'}(j) = a_{e'}(k)$.
Thus $a_{e'}(n) = \a a_{e'}(i) + \b a_{e'}(j)$ is equivalent to
$(n,k) \in I(X, \|\phantom{x}\|, e')$. By Lemma \ref{ip} we can
compute from $X^+$ the characteristic function of $I(X,
\|\phantom{x}\|, e')$ and thus check whether the latter condition
holds. It is now obvious that $\widetilde{X^+} \in \mcA_- (\R^\N)$
and that and that $\widetilde{\phantom{X}}$ is computable. It is
also obvious that $\phi (g_{(X,X,r)}, q) \in \widetilde{X^+}$.

(3) Let $(((a_n), X^+, r)) \in \dom(\chi)$ and notice that $r \leq
1$. We need to compute $g: X \to \R$ linear and bounded such that
$\|g\| = r$ and $g(a_{e'}(n)) = a_n$. Given $x \in X$ to compute
$g(x)$ within $2^{-k}$ it suffices to find $n$ such that $\|x -
a_{e'}(n)\| < 2^{-k}$. Then
\[
|g(x) - a_n| = |g(x) - g(a_{e'}(n))| \leq r \cdot \|x - a_{e'}(n)\| < 2^{-k}
\]
and we can use $a_n$ as an approximation of $g(x)$.
\end{proof}

The next Lemma is the uniform version of Theorem 5 of \cite{Bra08}.

\begin{lemma}\label{H}
Let $H: \sbsq \PF \times \Bai \to \mcA_- (\R^\N)$ be the function
with
\[
\dom(H) =\set{(f_{(X,A,r)}, q)}{(X,q) \in \BS^+ \land r=1}
\]
defined by
\[
H(f_{(X,A,1)}, q) = \set{\phi(g_{(X,X,1)}, q)}{g \restriction A = f}.
\]
Then $H$ is computable.
\end{lemma}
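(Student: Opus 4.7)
The plan is to produce, uniformly from any name of $(f_{(X,A,1)}, q) \in \dom(H)$, an enumeration of the rational open balls of $\R^\N$ whose union is the complement of $H(f_{(X,A,1)}, q)$. Recall that each such ball has the form $B = I_0 \times \dots \times I_m \times \R \times \R \times \cdots$ with every $I_j$ a nonempty open interval with rational endpoints.

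The first step is to characterize membership in $H(f_{(X,A,1)}, q)$. By Theorem \ref{BA}, $(a_n) \in H(f_{(X,A,1)}, q)$ exactly when $(a_n) \in \widetilde{(X,q)}$ \emph{and} the corresponding bounded linear functional $g$ (with $\|g\| \leq 1$ and $g(a_{e'}(n)) = a_n$) extends $f$; note that once $g$ extends $f$ we automatically have $\|g\| \geq \|f\| = 1$, so $\|g\| = 1$ as required. Let $(d_i)_{i \in \N}$ be the dense sequence in $A$ extracted from the $\mcA_+$-name of $A$. Since $g$ and $f$ are both bounded linear, and $A$ is closed, $g$ extends $f$ if and only if $g(d_i) = f(d_i)$ for every $i$.

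The second step makes this last condition effective in the coordinates $a_n$. By Lemma \ref{ip}(2), uniformly in $(X,q) \in \BS^+$ and in $i$, we compute a sequence $(t_{i,k})_{k \in \N}$ of codes of finite sequences with $\|a_{e'}(t_{i,k}) - d_i\| \leq 2^{-k}$. Using $g(a_{e'}(t_{i,k})) = a_{t_{i,k}}$ and $\|g\| \leq 1$ we get $|a_{t_{i,k}} - g(d_i)| \leq 2^{-k}$; hence, under the assumption $(a_n) \in \widetilde{(X,q)}$, the extension condition is equivalent to $|a_{t_{i,k}} - f(d_i)| \leq 2^{-k}$ for all $i, k \in \N$. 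Moreover, $f(d_i)$ is a computable real uniformly in $i$: apply the computable realizer of $f$ (part of the name of $f_{(X,A,1)}$) to a name of the computable point $d_i$.

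Consequently the complement of $H(f_{(X,A,1)}, q)$ in $\R^\N$ equals
\[
(\R^\N \setminus \widetilde{(X,q)}) \cup \bigcup_{i,k \in \N}
\set{(a_n) \in \R^\N}{|a_{t_{i,k}} - f(d_i)| > 2^{-k}},
\]
a computably enumerable union of open sets. I would enumerate rational open balls in each piece:
\begin{enumerate}[(a)]
\item balls disjoint from $\widetilde{(X,q)}$, read off from the $\mcA_-(\R^\N)$-name of $\widetilde{(X,q)}$ supplied by Theorem \ref{BA}(2);
\item for every $i, k \in \N$ and every rational open ball $B = I_0 \times \dots \times I_m \times \R \times \R \times \cdots$ with $m \geq t_{i,k}$, enumerate $B$ as soon as rational approximations of $f(d_i)$ witness that $I_{t_{i,k}}$ is disjoint from the closed interval $[f(d_i) - 2^{-k}, f(d_i) + 2^{-k}]$.
\end{enumerate}
Each enumerated ball is clearly disjoint from $H(f_{(X,A,1)}, q)$. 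Conversely, any point of the complement lies in one of the open summands above, and since these summands are open some rational basic ball around the point is contained in that summand and is eventually enumerated. The whole procedure is uniform in the name of $(f_{(X,A,1)}, q)$; the only real work is to check this uniformity, which reduces to Lemma \ref{ip}, Theorem \ref{BA}(2), and the uniform computability of $f(d_i)$ from the given data. I do not expect a genuine obstacle beyond this bookkeeping.
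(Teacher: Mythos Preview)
Your proof is correct and follows the same strategy as the paper: characterize $H(f_{(X,A,1)},q)$ as the intersection of $\widetilde{(X,q)}$ with a closed set expressing that the functional determined by $(a_n)$ agrees with $f$ on a dense sequence in $A$. The only difference is cosmetic: instead of computing specific approximations $t_{i,k}$ via Lemma~\ref{ip}(2) and imposing $|a_{t_{i,k}}-f(d_i)|\leq 2^{-k}$, the paper imposes the single universal condition $\forall n,i\ |f(y_i)-a_n|\leq\|y_i-a_{e'}(n)\|$, which is equivalent (under $(a_n)\in\widetilde{(X,q)}$) and avoids the detour through Lemma~\ref{ip}(2).
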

\begin{proof}
Given $(f_{(X,A,1)}, q) \in \dom(H)$ let $X^+ = (X,q)$. We can
compute $e' = e \circ q$ and $\set{y_i}{i \in \N}$, a dense subset
of $A \in \mcA_+ (X)$. Notice that $(a_n) \in H(f_{(X,A,1)}, q)$ if
and only if $(a_n) \in \widetilde{X^+}$ and
\[
\forall n,i\, |f(y_i) - a_n| \leq \|y_i - a_{e'}(n)\|.
\]
Therefore $H(f_{(X,A,1)}, q) \in \mcA_- (\R^\N)$. The computability
of $H$ is immediate.
\end{proof}

Finally we can prove the first half of our main result.

\begin{theorem}
\HB\ is \Sep-computable.
\end{theorem}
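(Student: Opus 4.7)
The plan is to combine all the machinery built up in the preceding lemmas into a single reduction pipeline: starting from the input $f_{(X,A,1)} \in \dom(\HB)$, produce a witness $q$ for the uniform independence lemma, pass to a picture in $\R^\N$ via the Banach-Alaoglu map $\phi$, invoke the \Sep-computable selection on a closed subset of a compact set, and finally return to \PF\ via the inverse map $\chi$. Every step except the selection is outright computable, so Theorem \ref{compositionSep-comp} will yield \Sep-computability of the composition.

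Concretely, first I apply the computable multi-valued function $\zeta$ of Lemma \ref{ueil} to the $\BS$-component of the input to obtain $q$, hence $X^+ = (X,q) \in \BS^+$. Then, using Lemma \ref{H}, I compute the closed set $H(f_{(X,A,1)}, q) \in \mcA_-(\R^\N)$ whose points are exactly the $\phi$-images of all norm-preserving extensions $g$ of $f$. By Theorem \ref{BA}(2) I also compute the compact set $\widehat{X^+} \in \mcK(\R^\N)$, which contains $\widetilde{X^+}$ and hence $H(f_{(X,A,1)}, q)$. The classical Hahn-Banach Theorem guarantees the existence of a norm-$1$ extension of $f$, so $H(f_{(X,A,1)}, q) \neq \emptyset$. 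Therefore the pair $(\widehat{X^+}, H(f_{(X,A,1)}, q))$ lies in $\dom(\Sel_{\mcK(\R^\N)})$, and by Theorem \ref{sel}(1) I can select a point $(a_n)$ from it using a \Sep-computable multi-valued function. Finally, I apply $\chi$ from Theorem \ref{BA}(3) to $((a_n), X^+, 1)$ to recover $(g_{(X,X,1)}, q)$, and projecting out the first component yields a value in $\HB(f_{(X,A,1)})$.

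The small verification I need is that the hypothesis of $\chi$ is satisfied, namely that $1 = \sup\{|a_n|/\|a_{e'}(n)\|\}$. But $(a_n) = ((g \circ a_{e'})(n))$ for some norm-$1$ extension $g$, so each quotient is bounded by $\|g\| = 1$ and, since the linear span of $\ran(e')$ is dense in $X$, the supremum equals $\|g\| = 1$. Thus feeding the constant $1$ in the third slot of $\chi$ is legitimate, and the output is precisely a $(\d_\PF, \d_\BS \times \text{id})$-name for a Hahn-Banach extension together with a $\zeta$-witness, from which the \PF-component is the required value of \HB.

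The only non-computable step in the pipeline is the invocation of $\Sel_{\mcK(\R^\N)}$, which is \Sep-computable by Theorem \ref{sel}(1); all other steps (computing $q$, $H$, $\widehat{X^+}$, $\widetilde{X^+}$, $\chi$) are outright computable by Lemmas \ref{ueil}, \ref{ip}, \ref{H} and Theorem \ref{BA}. By Theorem \ref{compositionSep-comp} their composition is again \Sep-computable, giving $\HB \cred \Sep$. The main conceptual obstacle is not in the reduction itself — which is essentially an unrolling of definitions once the uniform Banach-Alaoglu theorem and the uniform independence lemma are in place — but in the justification that the classical existence of norm-preserving extensions lets us invoke the selection function on a provably nonempty closed subset of a compact set, thereby avoiding any constructive reworking of the proof of Hahn-Banach (exactly the \emph{external analysis} phenomenon pointed out in the introduction).
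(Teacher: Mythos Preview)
Your proof is correct and follows essentially the same pipeline as the paper's: compute $q$ via $\zeta$, compute $\widehat{X^+}$ and $H(f_{(X,A,1)},q)$, apply $\Sel_{\mcK(\R^\N)}$ (the single non-computable step, justified nonempty by the classical Hahn-Banach theorem), and recover the extension via $\chi$. Your explicit verification that $((a_n),X^+,1)\in\dom(\chi)$ is a nice addition the paper leaves implicit, and your appeal to Theorem~\ref{compositionSep-comp} is slightly heavier than necessary---since only one step is non-computable, the paper simply notes $\HB \cred \Sel_{\mcK(\R^\N)} \cred \Sep$ directly---but the argument is sound.
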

\begin{proof}
Given $f_{(X,A,1)} \in \PF$ by Lemma \ref{ueil} we can compute $q \in
\zeta(X)$, so that $X^+ = (X,q) \in \BS^+$. By Theorem \ref{BA} and
Lemma \ref{H} we can compute $\widehat{X^+} \in \mcK (\R^\N)$ and $C
= H (f_{(X,A,1)}, q) \in \mcA_- (\R^\N)$, so that $C \subseteq
\widehat{X^+}$. Notice that $C \neq \eps$ because the Hahn-Banach
Theorem holds. We can thus apply the multi-valued function
$\Sel_{\mcK(\R^\N)}$ defined in Theorem \ref{sel} to the pair
$(\widehat{X^+}, C)$ and select a point $(a_n) \in C$. Then
\[
\chi ((a_n), X^+, 1) = (g_{(X,X,1)}, q) \text{ for some }
g_{(X,X,1)} \in \HB(f_{(X,A,1)}).
\]

We have thus shown $\HB \cred \Sel_{\mcK(\R^\N)}$. Since
$\Sel_{\mcK(\R^\N)}$ is \Sep-computable by Theorem \ref{sel}.1, this
completes the proof.
\end{proof}

\subsection{Proof of $\Sep \cred \HB$}
The proof of the other half of our main result is obtained by
adapting the proof of Theorem IV.9.4 in \cite{sosoa}.

\begin{theorem}\label{reversal}
$\Sep \cred \HB$.
\end{theorem}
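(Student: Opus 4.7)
The plan is to adapt to the computable analysis setting the reversal of Brown and Simpson (the proof that the separable Hahn--Banach Theorem implies \WKL\ over \RCA), which goes back to Bishop and to Metakides, Nerode and Shore \cite{MNS} and is presented as \cite[Theorem IV.9.4]{sosoa}. In that argument one starts from two disjoint recursively enumerated sets of naturals and constructs a separable Banach space carrying a bounded linear functional on a closed subspace, such that any norm-preserving extension to the whole space encodes a separating set. I will carry out this construction uniformly and computably in $(p,q) \in \dom(\Sep)$.

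First I would produce the map $h$ of Definition \ref{def:redmulti}. Uniformly in $(p,q)$, build a pseudo-normed $\Q$-vector space on a countable set of formal generators $v_0, e_0, e_1, \dots$ and take its constructive Banach completion as in Section \ref{subs:cBc} to obtain $X = X_{p,q} \in \BS$. The pseudo-norm is chosen along the lines of \cite{MNS} so that, intuitively, each $e_n$ is either \emph{free} (when $n$ has not yet been enumerated by either $p$ or $q$), \emph{pinned} to $0$ (as soon as $n \in \ran(p)$), or \emph{pinned} to $v_0$ (as soon as $n \in \ran(q)$). Let $A = A_{p,q} \in \mcA_+(X)$ be the closed linear subspace generated by $v_0$ together with $\{e_{p(n)} : n \in \N\} \cup \{e_{q(n)} - v_0 : n \in \N\}$; a $\p^X_+$-name for $A$ is enumerated directly from $(p,q)$. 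Set $f : A \to \R$ by $f(v_0) = 1$, $f(e_{p(n)}) = 0$ and $f(e_{q(n)} - v_0) = 0$; with the norm normalized suitably, $\|f\| = 1$. Thus $f_{(X,A,1)} \in \dom(\HB)$ is produced uniformly from $(p,q)$, as required.

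The main obstacle is calibrating this pseudo-norm precisely enough so that, in addition to being computable uniformly in $(p,q)$ and making $\|f\| = 1$, it enforces the following rigidity property: for every extension $g : X \to \R$ of $f$ with $\|g\| = 1$ and every $n \in \N$,
\[
g(e_n) \leq \tfrac{1}{3} \quad\text{or}\quad g(e_n) \geq \tfrac{2}{3}.
\]
Morally, the dual unit ball of $X$ should coincide with the set of functionals obtained from separators of $\ran(p)$ and $\ran(q)$ via the identification sending $r \in \Sep(p,q)$ to the functional with $e_n \mapsto r(n)$ and $v_0 \mapsto 1$. That is the content of the classical MNS construction, but transported to the computable analysis framework it also requires the pseudo-norm to be presented as a limit of finite-stage approximations with an explicit modulus of convergence, so that the resulting distances fit the Cauchy representation of $\BS$ and the space $X_{p,q}$ has a computable $\d_{\BS}$-name.

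Granted rigidity, the computable map $k$ of Definition \ref{def:redmulti} is routine. Given a $\d_{\PF}$-name of some $g_{(X,X,1)} \in \HB(f_{(X,A,1)})$, I compute $g(e_n)$ with precision $1/6$ by feeding (a Cauchy name of) $e_n$ to the realizer of $g$; by rigidity this finite computation decides whether $g(e_n) < 1/2$ or $g(e_n) > 1/2$, and I set $r(n) = 0$ or $r(n) = 1$ accordingly. Since $g$ extends $f$, we have $g(e_{p(n)}) = 0$ and $g(e_{q(n)}) = 1$ for every $n$, so $r(p(n)) = 0$ and $r(q(n)) = 1$, showing $r \in \Sep(p,q)$. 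This gives $\Sep \cred \HB$.
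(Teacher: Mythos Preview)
Your overall plan --- adapt the Metakides--Nerode--Shore/Brown--Simpson construction to produce, computably in $(p,q)$, an instance $f_{(X,A,1)}$ of $\HB$ whose solutions decode to separators --- is exactly the paper's strategy. But the ``rigidity'' you isolate as the key step is both unattainable and unnecessary, and this is a genuine gap. Unattainable: since $\|f\|=1$, every extension $g$ of $f$ with $\|g\|\leq 1$ already has $\|g\|=1$, so the set $\HB(f_{(X,A,1)})$ is convex and for each $n$ the set of values $\{g(e_n):g_{(X,X,1)}\in\HB(f_{(X,A,1)})\}$ is an interval. For a free $n\notin\ran(p)\cup\ran(q)$ your $e_n$ lies outside $A$ and nothing in your data breaks the symmetry between the two sides; under any norm that treats such $e_n$ neutrally this interval will contain $1/2$, so the dichotomy $g(e_n)\leq 1/3$ or $g(e_n)\geq 2/3$ cannot hold for all extensions. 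Unnecessary: a separator $r\in\Sep(p,q)$ only constrains $r(n)$ for $n\in\ran(p)\cup\ran(q)$; for free $n$ any bit is acceptable, so there is no need to control $g(e_n)$ there.

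The paper proceeds accordingly. It builds $X$ as a weighted $\ell_1$-sum of $2$-dimensional spaces $(\R^2,\|\cdot\|_n)$, where $\|\cdot\|_n$ depends continuously on a real parameter $\d_n$ that is computable from $(p,q)$ (positive, negative, or zero according as $n\in\ran(p)$, $n\in\ran(q)$, or neither). The subspace $A$ is simply the first-coordinate subspace and does \emph{not} depend on $(p,q)$; only the norm does. A short explicit calculation with these $2$-dimensional norms shows that every norm-$1$ extension $g$ is forced to satisfy $g(z_n)=-2^{-n-1}$ when $n\in\ran(p)$ and $g(z_n)=2^{-n-1}$ when $n\in\ran(q)$, with no claim whatsoever for free $n$. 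Decoding is then: approximate $g(z_n)$ within $2^{-n-2}$ and output the sign of the rational approximation; for free $n$ this sign is undetermined but irrelevant. You leave the pseudo-norm itself unspecified (``along the lines of~\cite{MNS}''), yet writing down this norm and carrying out the forcing calculation for enumerated $n$ is precisely where the content of the proof lies.
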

\begin{proof}
Let $p,q\in \Bai$ be such that $\ran(p)\cap \ran(q)=\eps$. We will
use $p$ and $q$ to compute $f_{(X,A,1)} \in \PF$ so that from any
element of $\HB(f_{(X,A,1)})$ we can compute an element of
$\Sep(p,q)$.

In particular, $X$ is a constructive Banach completion and,
following the construction in Subsection \ref{subs:cBc}, we need to
define a pseudo-norm on the set $C$ of all formal linear
combinations of elements of $\N$ with scalars in $\Q$. To define
this pseudo-norm (which depends on $p$ and $q$) we identify elements
of $\N$ with finite sequences of elements of $\Q^2$ as follows: $2n$
and $2n+1$ are identified respectively with the sequences $\la
(0,0), \dots, (0,0), (1,0) \ra$ and $\la (0,0), \dots, (0,0), (0,1)
\ra$ of length $n+1$. With this identification, $C$ is viewed as the
set $Q_2$ of all finite sequences of elements of $\Q^2$. We will
therefore define the pseudo-norm on $Q_2$.

Let
\[
\d_n= \left\{ \begin{array}{ll} 2^{-k} & \textrm{if $k=\m i(p(i)=n)$}\\
-2^{-k} & \textrm{if $k=\m i(q(i)=n)$}\\
0 & \textrm{otherwise.}
\end{array} \right.
\]
$\d_n$ is computable as a real number on the input $(p,q,n)$.

For $(\a,\b)\in \Q^2$, let
\[
\|(\a,\b)\|_n= \left\{ \begin{array}{ll} \max\{|\frac{1-\d_n}{1+\d_n}\a+\b|,|\a-\b|\} & \textrm{if $\d_n>0$}\\
 & \\
\max\{|\frac{1+\d_n}{1-\d_n}\a-\b|,|\a+\b|\} & \textrm{if $\d_n<0$}\\
 & \\
\max\{|\a+\b|,|\a-\b|\} & \textrm{if $\d_n=0$}\\
\end{array} \right.
\]
$\|(\a,\b)\|_n$ is computable as a real number on input
$(p,q,\a,\b,n)$. Notice that $\|(\a,0)\|_n = \|(0,\a)\|_n = |\a|$
for all $\a$ and $n$.

We can now define the pseudo-norm on $Q_2$ by
\[
\|\la (\a_i,\b_i)\ra_{i < k}\| = \sum_{i < k} 2^{-i-1} \cdot
\|(\a_i,\b_i)\|_i.
\]
This noted pseudo-normed space generates the constructive Banach
completion $X = X(p,q) \in \BS$\footnote{Intuitively, $X$ is the
$\ell_1$-sum of a sequence of $2$-dimensional Banach spaces with
slightly different metrics.}. As usual, we view $Q_2$ as a subset of
$X$.

Let
\[
A = \overline{\{\la (\a_i,0)\ra_{i < n}\}} \in \mcA_+ (X)
\]
and define $f: A \to \R$ by setting
\[
f(\la (\a_i,0)\ra_{i < n})=\sum_{i < n}2^{-i-1}\a_i
\]
and extending by continuity. The function $f$ is linear on $A$ and
is a bounded linear functional with $\|f\| \leq 1$, since
\begin{align*}
|f(\la (\a_i,0)\ra_{i < n})| & = |\sum_{i < n}2^{-i-1}\a_i|\\
 & \leq \sum_{i < n}2^{-i-1}|\a_i|\\
 & = \sum_{i < n}2^{-i-1}\|(\a_i,0)\|_i\\
 & = \|\la (\a_i,0)\ra_{i < n}\|.
\end{align*}
Moreover $\|\la (2,0)\ra \|=1$ and $f(\la (2,0)\ra )=1$, which shows
that $\|f\|= 1$. By evaluation and type conversion, one can compute
a realizer of $f$.

Therefore $f_{(X,A,1)} \in \PF$ has been computed from $(p,q)$ and
moreover we have $f_{(X,A,1)} \in \dom (\HB)$. Applying $\HB$ we
obtain $g_{(X,X,1)} \in \PF$ with $g \restriction A = f$.

For any $n \in \N$ let $z_n \in Q_2$ be the sequence $\la (0,0),
\dots, (0,0), (0,1) \ra$ of length $n+1$. Then $|g(z_n)| \leq
\|z_n\| = 2^{-n-1}$.

If $n \in \ran(p)$ then $\d_n>0$ and notice that, for $w_n = \la
(0,0), \dots, (0,0), (1+\d_n,0)\ra$ of length $n+1$ we have
\begin{align*}
|f(w_n)+\d_n g(z_n)| & = |g(w_n) + \d_n g(z_n)|\\
& = |g(w_n+\d_n z_n)|\\
& \leq \|w_n+\d_n z_n\|\\
& = \|\la (0,0), \dots, (0,0), (1+\d_n,\d_n)\ra\|.
\end{align*}
Since $\d_n>0$
\[
\|(1+\d_n,\d_n)\|_n=\max\{|\tfrac{(1-\d_n)}{(1+\d_n)}(1+\d_n)+\d_n|,|1+\d_n-\d_n|\}=1,
\]
and so $\|\la (0,0), \dots, (0,0), (1+\d_n,\d_n)\ra \|=2^{-n-1}$. We
deduce that $|2^{-n-1} (1 + \d_n) + \d_n g(z_n)| = |2^{-n-1} + \d_n
(2^{-n-1} + g(z_n))| \leq 2^{-n-1}$. Therefore $\d_n (2^{-n-1} +g
(z_n)) \leq 0$ and so $g(z_n) \leq -2^{-n-1}$. Since $|g(z_n)| \leq
2^{-n-1}$ then $g(z_n) = -2^{-n-1}$.

Similarly, if $n \in \ran(q)$ (and thus $\d_n<0$) we obtain $g(z_n)
= 2^{-n-1}$ by considering
\[
|2^{-n-1} (1-\d_n) + \d_n g(z_n)| = |2^{-n-1} + \d_n(g(z_n) - 2^{-n-1})| \leq 2^{-n-1}.
\]

To compute an element of $\Sep(p,q)$, given $n$ look for the
approximation of $g(z_n)$ within $2^{-n-2}$ and check if it is
positive or not. This shows that from any $g$ such that $g_{(X,X,1)}
\in \HB(f_{(X,A,1)})$ we can uniformly compute an element of
$\Sep(p,q)$.
\end{proof}

\bibliography{HahnBanach}
\bibliographystyle{alpha}

\end{document}